\newcommand{\bbr}{\mathbb R}
\newcommand{\bbn} {\mathbb N}
\newcommand{\N}{\mathcal N}
\newcommand{\G}{\mathcal G}
\newcommand{\V}{\mathcal V}
\newcommand{\D}{\mathcal D}
\newcommand{\E}{\mathcal E}
\begin{document}

\markboth{J.-G. Dong, S.-Y. Ha and D. Kim}{Discrete and continuous thermomechanical Cucker-Smale models on general digraph}

%%%%%%%%%%%%%%%%%%% Publisher's Area please ignore %%%%%%%%%%%%%%%%%%%%%%%
%
\catchline{}{}{}{}{}
%
%%%%%%%%%%%%%%%%%%%%%%%%%%%%%%%%%%%%%%%%%%%%%%%%%%%%%%%%%%%%%%%%%%%%%%%%%%

\title{Emergent behaviors of continuous and discrete thermomechanical Cucker-Smale models on general digraphs}

\author{Jiu-Gang Dong}
\address{Department of Mathematics, Harbin Institute of
Technology, Harbin 150001, China \\
jgdong@hit.edu.cn}

\author{Seung-Yeal Ha}
\address{Department of Mathematical Sciences and Research Institute of Mathematics \newline Seoul National University, Seoul 08826, and \newline
Korea Institute for Advanced Study, Hoegiro 85, Seoul, 02455,
Korea (Republic of) \\
syha@snu.ac.kr}

\author{Doheon Kim}
\address{Department of Mathematical Sciences, Seoul National University, \newline Seoul 08826, Korea (Republic of), dohun930728@snu.ac.kr}

\maketitle

%\begin{history}
%\received{}
%\revised{}
%\accepted{(Day Month Year)}
%\comby{(xxxxxxxxxx)}
%\end{history}

\begin{abstract}
We present emergent dynamics of continuous and discrete thermomechanical Cucker-Smale(TCS) models equipped with temperature as an extra observable on  general digraph. In previous literature, the emergent behaviors of the TCS models were mainly studied on a complete graph, or symmetric connected graphs. Under this symmetric  setting, the total momentum is a conserved quantity. This determines the asymptotic velocity and temperature a priori using the initial data only. Moreover, this conservation law plays a crucial role in the flocking analysis based on the elementary $\ell_2$ energy estimates. In this paper, we consider a more general connection topology which is registered by a general digraph, and the weights between particles are given to be inversely proportional to the metric distance between them. Due to this possible symmetry breaking in communication, the total momentum is not a conserved quantity, and this lack of conservation law makes the asymptotic velocity and temperature depend on the whole history of solutions. To circumvent this lack of conservation laws, we instead employ some tools from matrix theory on the scrambling matrices and some detailed analysis on the state-transition matrices. We present two sufficient frameworks for the emergence of mono-cluster flockings on a digraph for the continuous and discrete models. Our sufficient frameworks are given in terms of system parameters and initial data. 
\end{abstract}

\keywords{Digraph, emergence, energy estimate, scrambling matrices, state-transition matrices, thermomechanical Cucker-Smale particles}

\ccode{AMS Subject Classification: 39A11, 39A12, 34D05, 68M10}

\section{Introduction} \label{sec:1}
\setcounter{equation}{0}
Collective behaviors of many-particle systems are ubiquitous in our nature, e.g., flocking of birds, flashing of fireflies, swarming of fishes and herding of sheep, etc \cite{BC, B-H, L-P, T-T, VZ12}. Among the diverse collective behaviors, our concern lies on the flocking which denotes some concentration phenomenon in velocity, in which particles move with the same velocity asymptotically only using the simple environment information and basic rules. Motivated by the seminal contributions \cite{Rey87, VKJS95} on the flocking modeling by Reynolds and Vicsek et al, several mechanical models were proposed in literature in diverse disciplines such as applied mathematics, control theory of multi-agent system and statistical physics.  Among them, our main interest lies on the flocking models proposed by Cucker and Smale \cite{CS07}. In this paper, we are mainly interested in the thermodynamic Cucker-Smale model which generalizes the classical Cucker-Smale model by adding internal temperature variables. More precisely, let $x_i, v_i, \theta_i$ be the position, velocity and temperature of the $i$-th Cucker-Smale particles. Then, dynamics of these macroscopic observables is governed by the Cauchy problem to the continuous thermodynamic Cucker-Smale model introduced in \cite{H-R}:
\begin{align}
\begin{aligned}\label{A-0}
&\frac{dx_i}{dt}=v_i,\quad t>0, \quad i=1,2,\cdots, N,\\
&\frac{dv_i}{dt}=\frac{1}{N}\sum_{j=1}^N \chi_{ij}\phi(\|x_i-x_j \|)\Big(\frac{v_j}{\theta_j}-\frac{v_i}{\theta_i}\Big),\\
&\frac{d\theta_i}{dt}=\frac{1}{N}\sum_{j=1}^N \chi_{ij}\zeta(\|x_i-x_j\|)\Big(\frac{1}{\theta_i}-\frac{1}{\theta_j}\Big),\\
\end{aligned}
\end{align}
subject to the initial data:
\[(x_i(0),v_i(0),\theta_i(0))=(x_i^{in},v_i^{in},\theta_i^{in}),\quad i=1,2,\cdots, N\]
where $\|\cdot\|$ denotes the standard $\ell^2$-norm in $\bbr^d$. Here network topology $(\chi_{ij})$ is given as follows:
\[ \chi_{ij} = \begin{cases}
1, \quad & \mbox{if $j$ transmits information to $i$}, \cr
0, \quad & \mbox{otherwise,}
\end{cases}
\] Here, we assume that the directed graph corresponding to the network topology $(\chi_{ij})$ has at least one spanning tree. And we also assume $\chi_{ii}=1$ for $i=1,\cdots,N$, for some technical reason. The communication weights $\phi, \zeta: \mathbb{R}_+\cup\{0\}\to \mathbb{R}_+$ appearing in the R.H.S. of \eqref{A-0} are assumed to be bounded, Lipschitz continuous positive non-increasing functions defined on the nonnegative real numbers:
\begin{align*}
\begin{aligned} %\label{A-0-0}
& 0 < \phi(r) \leq \phi(0) =: \kappa_1, \quad 0 < \zeta(r) \leq \zeta(0) =: \kappa_2, \quad r \geq 0, \\
& (\phi(r)-\phi(s))(r-s)\le0,\quad (\zeta(r)-\zeta(s))(r-s)\le 0,\quad r,s\ge0.
\end{aligned}
\end{align*}
Note that if we choose all initial temperatures $\theta^{in}_i$ to have the same value, say unity, then it is easy to see that $\theta_i(t) \equiv 1,~i=1,\cdots, N$  and system \eqref{A-0} reduces to the Cucker-Smale (CS) model \cite{CS07} on a digraph:
\begin{align*}
\begin{aligned}
&\frac{dx_i}{dt}=v_i,\quad t>0, \quad i=1,2,\cdots, N,\\
&\frac{dv_i}{dt}=\frac{1}{N}\sum_{j=1}^N \chi_{ij}\phi(\|x_i-x_j \|) (v_j - v_i).
\end{aligned}
\end{align*}
The CS model has received lots of attention in literature from different perspectives, i.e., global and local flocking \cite{ C-H-H-J-K1, CS07,CS071, H-K-Z,  H-Liu, H-T}, collision avoiding \cite{A-C-H-L, CD10}, time-delay effect \cite{CH17, C-L, D-H-K, E-H-S, LW14, PV17}, hierarchical and rooted leadership, general digraph \cite{DQ17, L-H, L-X, She}, application to flight navigation \cite{P-E-G}, noisy effects \cite{A-H, C-M,  D-F-T, H-L-L}, mean-field limit \cite{B-C-C, H-Liu, H-K-Z}, kinetic and hydrodynamic description \cite{C-C-R, C-F-R-T, F-H-T, H-T, P-S}, and variants of C-S model \cite{M-T2, P-K-H}, etc. (see recent survey papers \cite{C-H-L, M-T1} for details). Compared to the above vast literature on the C-S model, there were very few works for the TCS model, e.g., global flocking \cite{H-K-R, H-R}, flocking effect by singular communication \cite{C-H-K}, hydrodynamic TCS model \cite{H-K-M-R-Z1, H-K-M-R-Z2}. \newline

Next, we consider the discrete analogue of \eqref{A-0}:
\begin{align}
\begin{aligned}\label{A-1}
&x_i[t+1]=x_i[t]+h v_i[t],\quad t\in\bbn\cup\{0\}, \quad i=1,2,\cdots, N,\\
&v_i[t+1]=v_i[t]+\frac{h}{N}\sum_{j=1}^N \chi_{ij}\phi(\|x_i[t]-x_j[t]\|)\Big(\frac{v_j[t]}{\theta_j[t]}-\frac{v_i[t]}{\theta_i[t]}\Big),\\
&\theta_i[t+1]=\theta_i[t]+\frac{h }{N}\sum_{j=1}^N \chi_{ij}\zeta(\|x_i[t]-x_j[t]\|)\Big(\frac{1}{\theta_i[t]}-\frac{1}{\theta_j[t]}\Big),\\
\end{aligned}
\end{align}
with the initial data
\[(x_i[0],v_i[0],\theta_i[0])=(x_i^{in},v_i^{in},\theta_i^{in}),\quad i=1,2,\cdots, N.\]
Here $h>0$ denotes the time-step. \newline

Before, we discuss our main results obtained in this paper, we recall the concept of mono-cluster flocking as follows. We first set $X, V$ and $\Theta$:
\begin{align*}
\begin{aligned}
& X := (x_1, \cdots, x_N), \quad V := (v_1, \cdots, v_N), \quad \Theta := (\theta_1, \cdots, \theta_N), \\
& \D(X(t)):=\max_{1\leq i,j\leq N}\|x_i(t)-x_j(t)\|,\quad  \D(V(t)):=\max_{1\leq i,j\leq N}\|v_i(t)-v_j(t)\|, \\
& \D(\Theta(t)):=\max_{1\leq i,j\leq N}|\theta_i(t)-\theta_j(t)|.
\end{aligned}
\end{align*}
\begin{definition} \label{D1.1}
\emph{\cite{H-K-R, H-R}}
Let ${\mathcal C} := (X, V, \Theta)$ be a time-dependent configuration on the extended state space $\bbr^{Nd} \times \bbr^{Nd} \times \bbr^d$. Then the configuration ${\mathcal C}$ exhibits asymptotic mono-cluster flocking, if and only if the following conditions hold: 
\[ \sup_{0 \leq t < \infty} \D(X(t)) < \infty, \quad \lim_{t \to \infty}  \D(V(t))= 0, \quad   \lim_{t \to \infty}  \D(\Theta(t)) = 0. \]
\end{definition}

\vspace{0.2cm}

The main results of this paper are two-fold. Our first result is concerned with a sufficient framework leading to the asymptotic emergence of mono-cluster flocking. Our proof can be split into several steps. In the first step, we show that  under the assumption $ \min_{1\leq i \leq N} \theta_i^{in} > 0$, temperatures are away from zero (Lemma \ref{L3.1}). In the second step, under the a priori assumption on the boundedness of spatial diameter:
\begin{equation} \label{A-2}
 \sup_{0\leq t<\infty}\D(X(t)) < \infty,
\end{equation}
we show that the temperature and velocity diameters decay exponentially (Proposition \ref{P3.1} and Proposition \ref{P3.2}): for some $c>0$ we have
\[  \D(\Theta(t))=\mathcal O(e^{-ct})\quad\mbox{and}\quad   \D(V(t))=\mathcal O(e^{-ct})\quad\mbox{as}\quad  t\to\infty \]
In our final step, we show that if initial data $(X^{in}, V^{in}, \Theta^{in})$ satisfy
\[ \D(X^{in}) + \D(V^{in}) + \D(\Theta^{in}) \ll 1. \]
then, we can show that the a priori condition \eqref{A-2} is attained and conclude the emergence of mono-cluster flocking in the sense of Definition \ref{D1.1}. \newline

Our second main result is concerned with a sufficient framework for the mono-cluster flocking of the discrete model \eqref{A-1}. Flocking analysis for the discrete model is almost parallel to the continuous one except one extra condition on the smallness of time-step:
\[ 0 < h  \ll  \min \Big \{ \frac{1}{\kappa_1}, \frac{1}{\kappa_2} \Big \}. \]
Other precedures are almost the same as the continuous one. \newline

The rest of the paper is organized as follows: In Section \ref{sec:2}, we briefly review directed graphs, scrambling matrices, state-transition matrices and reformulation of \eqref{A-0} and \eqref{A-1} in terms of coldness which is the inverse of temperature. In Section \ref{sec:3}, we present our first result which concerns the emergence of mono-cluster flocking to the continuous model following procedure depicted as above. In Section \ref{sec:4}, we perform a similar analysis as in Section \ref{sec:3} to prove our second result which concerns the discrete model. Finally, Section \ref{sec:5} is devoted to the brief summary and discussion of our main results and some future directions.  In \ref{App-A}, \ref{App-B} and \ref{App-C}, we provide the proofs of Lemma \ref{L4.2}, Lemma \ref{L4.3} and Proposition \ref{P4.2}, respectively. 
\vspace{0.5cm}

\noindent \textbf{Notation}: For $s\in\bbr$, $\lfloor s\rfloor$ denotes the greatest integer not exceeding $s$. And for any matrix $A$, $\|A\|_F$ denotes the Frobenius norm. Throughout the paper, we denote particle $i$ by the $i$-th thermodynamic Cucker-Smale particle. For an $m\times n$ matrices $A=(a_{ij})$ and $B=(b_{ij})$, $A\geq B$ means that $a_{ij}\geq b_{ij}$ for all $i,~j$. And $A\geq0$ means that $a_{ij}\geq0$ for all $i,~j$.  An $N \times N$ matrix $A=(a_{ij})$ is nonnegative means that all entries are nonnegative. 

\section{Preliminaries} \label{sec:2}
\setcounter{equation}{0}
In this section, we provide several elementary concepts on directed graphs, scrambling matrices, state-transition matrices and a reformulation of the TCS model using coldness variable instead of temperature.

\subsection{A directed graph}\label{sec:2.1}
A directed graph(digraph) $\G=(\V(\G),\E(\G) )$ consists of a finite set $\V(\G)=\{1,\ldots, N\}$ of {\em vertices (nodes)}, and a set $\E(\G) \subset\V(\G) \times \V(\G)$ of {\em arcs}. \newline
If $(j,i)\in\E(\G)$, we say that $j$ is a {\em neighbor} of $i$, and we denote the neighbor set of the vertex $i$ by
$\N_i:=\{j:(j,i)\in\E(\G)\}$. We define the {\em adjacency matrix} $\chi=(\chi_{ij})$, where
\[ \chi_{ij} = \begin{cases}
1, \quad & \mbox{if $j$ is a neighbor of $i$}, \cr
0, \quad & \mbox{otherwise.}
\end{cases}
\]
A {\em path} in a digraph $\G$ from $i_0$ to $i_p$ is a finite
sequence $i_0, i_1, \ldots,i_p$ of distinct vertices such that each successive pair of
vertices is an arc of $\G$. The integer $p$ (the number of its
arcs) is called the {\em length} of the path.  If there exists a path from $i$ to $j$, then vertex~$j$ is said to be
{\em reachable} from vertex~$i$, and we define the distance from $i$ to $j$, $\operatorname{dist}(i,j)$,
as the length of a shortest path from $i$ to $j$.
We say that $\G$ has a {\em spanning tree} if we can find a vertex
(called a {\em root}) such that any
other vertex of~$\G$ is reachable from it.  For each root $r$ of digraph $\G$ with a spanning tree, we define $\max_{j\in\V}\operatorname{dist}(r,j)$
as the depth of the spanning tree of $\G$ rooted at $r$. The {\em smallest depth} $\gamma_g$ of $\G$ is given by the following relation:
\begin{equation*}
 \gamma_g:=\min_{r~ \mbox{is a root}}\max_{j\in\V}\operatorname{dist}(r,j).
\end{equation*}
Throughout the paper, we denote $\gamma_g$ by the smallest depth of the directed graph corresponding to the network topology $(\chi_{ij})$ in \eqref{A-0} and \eqref{A-1}.

\subsection{Scrambling matrices} Next, we review the concept of scrambling matrix and its properties.  First, we introduce concepts of stochastic matrix, scrambling matrix and adjacency matrix as follows.
\begin{definition} \label{D2.1}
Let $A=(a_{ij})$ be a nonnegative $N \times N$ matrix. 
\begin{enumerate}
\item
$A$ is a {\em stochastic} matrix, if its row-sum is equal to unity:
\[ \sum_{j=1}^{N} a_{ij} = 1, \quad 1 \leq i \leq N. \]
\item
$A$ is a {\em scrambling} matrix, if for each pair of indices $i$ and $j$, there exist an index $k$ such that $a_{ik}>0$ and $a_{jk}>0$. 
\item
$A$ is an {\em adjacency matrix} of a digraph $\G$ if the following holds: 
\[ a_{ij}>0 \quad \Longleftrightarrow \quad (j,i)\in\E. \]
In this case, we write $\G=\G(A)$.
\end{enumerate}
\end{definition}
\begin{remark}
Define the {\em ergodicity coefficient} of $A$ as follows.
\begin{equation} \label{B-1}
\mu(A):=\min_{i,j}\sum_{k=1}^N \min\{a_{ik},a_{jk}\}.
\end{equation}
Then, it is easy to see that
\begin{enumerate}
\item
$A$ is scrambling if and only if $\mu(A)>0$. 
\item
For nonnegative matrices $A$ and $B$, 
\begin{equation} \label{B-1-1}
 A\geq B \quad \Longrightarrow \quad \mu(A)\geq\mu(B). 
\end{equation} 
\end{enumerate}
\end{remark}
For a matrix $A=(a_{ij})$, we set 
\[ \underline a:=\min\{a_{ij}:a_{ij}>0\}. \]
\begin{lemma} \label{L2.1}  
\emph{(Proposition 1, \cite{DQ17})}
Let $A=(a_{ij})$ be a nonnegative $N\times N$ matrix of which all diagonal entries are positive. Suppose that $\G(A)$ has a spanning tree with the smallest depth $\gamma_g$. Then, we have 
\[ \mu(A^{\gamma_g})\geq \underline{a}^{\gamma_g}. \]
\end{lemma}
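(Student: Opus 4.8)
The plan is to reduce the statement to a single combinatorial fact about walks in $\G(A)$ and then read off the lower bound on the ergodicity coefficient directly from its definition \eqref{B-1}. First I would recall the standard dictionary between matrix powers and walks. Under the convention $a_{ij}>0 \Longleftrightarrow (j,i)\in\E$, the entry $(A^p)_{ij}$ equals the sum, over all walks of length $p$ from $j$ to $i$ in $\G(A)$, of the products of the corresponding entries of $A$. In particular $(A^p)_{ij}>0$ precisely when such a walk exists, and every nonzero summand is a product of exactly $p$ \emph{positive} entries of $A$, each of which is therefore $\geq \underline a$ by the definition of $\underline a$.

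Let $r$ be a root realizing the smallest depth, so that $\operatorname{dist}(r,i)\leq\gamma_g$ for every vertex $i$. Fix $i$ and choose a shortest path from $r$ to $i$, say of length $\ell\leq\gamma_g$. The key step is a padding argument: since every diagonal entry $a_{kk}$ is positive, each vertex carries a self-loop in $\G(A)$, so I can insert $\gamma_g-\ell$ self-loops (e.g. at $r$) to turn the path into a walk of length exactly $\gamma_g$ from $r$ to $i$. This walk contributes one positive summand to $(A^{\gamma_g})_{ir}$, namely a product of $\gamma_g$ positive entries of $A$, each $\geq\underline a$. Because all entries of $A^{\gamma_g}$ are nonnegative, discarding the remaining summands yields
\[ (A^{\gamma_g})_{ir}\geq \underline a^{\gamma_g},\qquad 1\leq i\leq N. \]
This is the crucial uniform column-positivity statement: the existence of a spanning tree, combined with the positive diagonal, forces the $r$-th column of $A^{\gamma_g}$ to be bounded below entrywise by $\underline a^{\gamma_g}$.

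Finally I would substitute this into \eqref{B-1}. For any pair of indices $(i,j)$, the common column index $k=r$ already gives $\min\{(A^{\gamma_g})_{ir},(A^{\gamma_g})_{jr}\}\geq \underline a^{\gamma_g}$, and since the inner sum over $k$ has nonnegative terms it is bounded below by this single contribution. Taking the minimum over all $(i,j)$ then gives $\mu(A^{\gamma_g})\geq \underline a^{\gamma_g}$. The argument is short, and the only points demanding care are the bookkeeping of the matrix-versus-graph orientation (which index of $a_{ij}$ is the head of the arc) and the self-loop padding; the latter is exactly where the hypothesis on positive diagonal entries enters, and it is the step that converts "reachable within $\gamma_g$ steps'' into "reachable by a walk of length exactly $\gamma_g$,'' which is what the walk-counting interpretation of $A^{\gamma_g}$ requires.
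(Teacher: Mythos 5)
The paper does not prove Lemma \ref{L2.1} at all: it is quoted as Proposition 1 of \cite{DQ17}, so there is no in-paper proof to compare against. Your argument is correct and complete, and it is the standard (and essentially the only natural) proof of this fact: the walk-counting interpretation of $(A^{\gamma_g})_{ir}$, the self-loop padding to promote a path of length $\ell\leq\gamma_g$ from the optimal root $r$ into a walk of length exactly $\gamma_g$, the resulting entrywise lower bound $(A^{\gamma_g})_{ir}\geq\underline a^{\gamma_g}$ on the $r$-th column, and the single-term lower bound on the sum in \eqref{B-1} with $k=r$ all check out, including the orientation convention $a_{ij}>0\Leftrightarrow(j,i)\in\E$.
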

%\begin{proof}
%Let $A^{\gamma_g}=(b_{ij})$, and let $r$ be the root of the digraph $\G(A)$ such that $\gamma_g=\max\limits_{j\in\V}\dist(r,j)$. Observe that
%\[
%b_{ir}=\sum_{1\leq i_1,\cdots,i_{\gamma_g}\leq N}a_{ii_1}a_{i_1i_2}\cdots a_{i_{\gamma_g-1}r},\quad i=1,\cdots,N.
%\]
%Note that for any $i=1,\cdots,N$, there exists a path of length no longer than $\gamma_g$ from $r$ to $i$, and that $\G(A)$ has a self-loop at each vertex. Hence, there exists a path of length precisely $\gamma_g$ from $r$ to $i$, which implies that $b_{ir}\geq \underline{a}^{\gamma_g},\quad i=1,\cdots,N.$
%\end{proof}
The following result is a perturbative version of Lemma 2.1 in \cite{C-D}.  
\begin{lemma}\label{L2.2}
Suppose that a nonnegative $N\times N$ matrix $A=(a_{ij})$ is stochastic, and let $B=(b_i^j)$, $Z=(z_i^j)$ and $W=(w_i^j)$ be $N\times d$ matrices such that 
\begin{equation} \label{B-2}
 W = AZ+B. 
\end{equation} 
Then, we have
\begin{equation*}
\max_{i,k}\|w_i-w_k\|\leq(1-\mu(A))\max_{l,m}\|z_l-z_m\|+\sqrt{2}\|B\|_F,
\end{equation*}
where
\[
z_i:=(z_i^1,\cdots,z_i^d),\quad b_i:=(b_i^1,\cdots,b_i^d),\quad  w_i:=(w_i^1,\cdots,w_i^d),\quad i=1,\cdots,N.
\]
\end{lemma}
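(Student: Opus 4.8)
The plan is to work row-wise. Writing $w_i$, $z_i$, $b_i$ for the $i$-th rows of $W$, $Z$, $B$, the relation \eqref{B-2} reads $w_i = \sum_{j=1}^N a_{ij} z_j + b_i$ for each $i$. I would fix a pair of indices $i,k$ and subtract the two relations, obtaining
\[ w_i - w_k = \sum_{j=1}^N (a_{ij}-a_{kj}) z_j + (b_i - b_k). \]
Since $A$ is stochastic, $\sum_j a_{ij} = \sum_j a_{kj} = 1$, so the coefficient vector $(a_{ij}-a_{kj})_j$ has zero total sum; this zero-sum structure is the crucial feature to exploit, and I would estimate the two terms on the right separately.

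For the first term, I would split each coefficient into its positive and negative parts, $p_j := (a_{ij}-a_{kj})_+$ and $q_j := (a_{kj}-a_{ij})_+$, so that $a_{ij}-a_{kj} = p_j - q_j$ and, because the total sum vanishes, $\sum_j p_j = \sum_j q_j =: S$. Observing that $a_{ij} = \min\{a_{ij},a_{kj}\} + p_j$, summation over $j$ yields $S = 1 - \sum_j \min\{a_{ij},a_{kj}\} \leq 1 - \mu(A)$, by the definition \eqref{B-1} of the ergodicity coefficient. If $S = 0$ the two rows of $A$ coincide and the term vanishes; otherwise I would normalize to probability weights $\tilde p_j := p_j/S$ and $\tilde q_l := q_l/S$ and rewrite the difference as a genuine average of pairwise increments,
\[ \sum_{j} (a_{ij}-a_{kj}) z_j = S \sum_{j,l} \tilde p_j \tilde q_l (z_j - z_l), \]
using $\sum_j \tilde p_j = \sum_l \tilde q_l = 1$. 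The triangle inequality then bounds the norm of this term by $S \max_{l,m}\|z_l - z_m\| \leq (1-\mu(A))\max_{l,m}\|z_l - z_m\|$.

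For the second term I would use the elementary inequality $\|b_i - b_k\|^2 \leq 2(\|b_i\|^2 + \|b_k\|^2)$ together with $\|b_i\|^2 + \|b_k\|^2 \leq \sum_{l=1}^N \|b_l\|^2 = \|B\|_F^2$, which gives $\|b_i - b_k\| \leq \sqrt{2}\,\|B\|_F$. Combining the two estimates and taking the maximum over $i,k$ produces the claimed bound.

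I expect the main obstacle to be the first term: the key is to recognize that a zero-sum coefficient vector can be written as a difference of two probability vectors sharing a common mass $S$, that $S$ is controlled by the ergodicity coefficient via $S = 1 - \sum_j \min\{a_{ij},a_{kj}\}$, and that passing to the double-sum form exposes the increments $z_j - z_l$. Once this algebraic rewriting is in place the rest is just the triangle inequality, with stochasticity entering solely to guarantee $\sum_j (a_{ij}-a_{kj}) = 0$.
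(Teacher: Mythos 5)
Your proof is correct, and it reaches the same constants through a noticeably different device for the main term. The paper estimates $\|w_i-w_k\|^2$ by pairing the identity $w_i-w_k=\sum_l(a_{il}-a_{kl})z_l+(b_i-b_k)$ with the vector $w_i-w_k$ itself, bounding the scalar quantities $\langle z_l,w_i-w_k\rangle$ between their maximum and minimum over $l$, using the zero-sum property to convert this into $\max_{n,m}\langle z_n-z_m,w_i-w_k\rangle$, and finishing with Cauchy--Schwarz and a division by $\|w_i-w_k\|$. You instead stay at the vector level: you write the zero-sum coefficient vector as $S(\tilde p-\tilde q)$ with $\tilde p,\tilde q$ probability vectors, use the coupling identity $\sum_j(a_{ij}-a_{kj})z_j=S\sum_{j,l}\tilde p_j\tilde q_l(z_j-z_l)$, and apply the triangle inequality directly. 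The combinatorial core is identical in both arguments --- the observation that the common mass $S=\sum_l\max\{0,a_{il}-a_{kl}\}$ equals $1-\sum_l\min\{a_{il},a_{kl}\}\le 1-\mu(A)$, and the bound $\|b_i-b_k\|\le\sqrt2\,\|B\|_F$ --- but your route avoids the quadratic-form manipulation and the implicit division by $\|w_i-w_k\|$ (a harmless but unaddressed degenerate case in the paper's version), at the cost of introducing the double-sum representation, which you correctly justify including the $S=0$ case. Both are complete; yours is arguably the cleaner of the two.
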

\begin{proof} First we use a property of stochastic matrices  to see
\begin{equation} \label{B-2-1}
\sum_{l=1}^N\max\{0,a_{il}-a_{kl}\}+\sum_{l=1}^N\min\{0,a_{il}-a_{kl}\}=\sum_{l=1}^N(a_{il}-a_{kl})=0.
\end{equation}
Then, for $1\leq i,k\leq N$, we use \eqref{B-2}, \eqref{B-2-1} and Cauchy-Schwarz's inequality to find
\begin{align}\label{B-3}
\begin{aligned}
&\|w_i-w_k\|^2 \\
& \hspace{0.5cm} =\bigg\langle \sum_{l=1}^N a_{il}z_l+b_i-\sum_{l=1}^N a_{kl}z_l-b_k,w_i-w_k \bigg\rangle\\
& \hspace{0.5cm} =\sum_{l=1}^N (a_{il}-a_{kl})\langle z_l,w_i-w_k\rangle+\langle b_i-b_k,w_i-w_k\rangle\\
& \hspace{0.5cm} \leq\sum_{l=1}^N\max\{0,a_{il}-a_{kl}\}\max_n \langle z_n,w_i-w_k\rangle \\
& \hspace{0.8cm}+\sum_{l=1}^N\min\{0,a_{il}-a_{kl}\}\min_n \langle z_n,w_i-w_k\rangle +\langle b_i-b_k,w_i-w_k\rangle\\
& \hspace{0.5cm} =\sum_{l=1}^N\max\{0,a_{il}-a_{kl}\}\max_{n,m} \langle z_n-z_m,w_i-w_k\rangle+\langle b_i-b_k,w_i-w_k\rangle\\
& \hspace{0.5cm} \leq\sum_{l=1}^N\max\{0,a_{il}-a_{kl}\}\max_{n,m} \| z_n-z_m\|\|w_i-w_k\|+\| b_i-b_k\|\|w_i-w_k\|,
\end{aligned}
\end{align}
By \eqref{B-3}, we have
\[
\max_{i,k}\|w_i-w_k\|\leq \max_{i,k}\sum_{l=1}^N\max\{0,a_{il}-a_{kl}\}\max_{n,m} \| z_n-z_m\|+\max_{i,k}\|b_i-b_k\|.
\]
Finally, we use the following observations:
\[
\max_{i,k}\sum_{l=1}^N\max\{0,a_{il}-a_{kl}\}=\max_{i,k}\sum_{l=1}^N (a_{il}-\min\{a_{il},a_{kl}\})=1-\mu(A)
\]
and
\[
\|b_i-b_k\|^2\leq\big(\|b_i\|+\|b_k\|\big)^2\leq2\|b_i\|^2+2\|b_k\|^2\leq2\|B\|_F^2\quad \mbox{for}\quad i\neq k
\]
to derive the desired estimate.
\end{proof}
\subsection{State-transition matrices}
Let $t_0\in\bbr$ and $A:[t_0,\infty)\to\bbr^{N\times N}$ be an $N\times N$ matrix of continuous functions. Consider the following time-dependent linear ODE:
\begin{equation}\label{B-4}
\frac{d\xi(t)}{dt} =A(t)\xi(t),\quad t > t_0.
\end{equation}
Then, the solution of \eqref{B-4} is given by 
\begin{equation*} 
\xi(t)=\Phi(t,t_0)\xi(t_0), 
\end{equation*}
where $\Phi(t,t_0)$ is the state-transition matrix or fundamental matrix. We can write the state-transition matrix $\Phi(t,t_0)$ in the following form, which is known as the Peano-Baker series (see \cite{Sontag}):
\[
\Phi(t,t_0)=I+\sum_{n=1}^\infty \int_{t_0}^t\int_{t_0}^{\tau_1}\cdots\int_{t_0}^{\tau_{n-1}}A(\tau_1)A(\tau_{2})\cdots A(\tau_n)d\tau_n\cdots d\tau_2d\tau_1,
\]
where $I$ is $N \times N$ identity matrix. We conclude this subsection by introducing a technical lemma to be used later. \newline

Let $t_0\in\bbr$, $c\in\bbr$ and $A:[t_0,\infty)\to\bbr^{N\times N}$ be an $N\times N$ matrix of continuous functions. Then, we set $\Phi(t,t_0)$ and $\Psi(t,t_0)$ to be the state-transition matrices corresponding to the following linear ODEs, respectively:
\begin{equation} \label{B-4-0}
\frac{d\xi(t)}{dt} =A(t)\xi(t) \quad \mbox{and} \quad 
\frac{d\xi(t)}{dt} =[A(t)+cI]\xi(t),\quad t\geq t_0.
\end{equation}
Then, the following lemma yields a relation between $\Phi(t,t_0)$ and $\Psi(t,t_0)$.
\begin{lemma}\label{L2.3}
The following relation holds.
\[ \Phi(t,t_0)=e^{-c(t-t_0)}\Psi(t,t_0), \quad \mbox{or} \quad \Psi(t,t_0) =e^{c(t-t_0)}  \Phi(t,t_0), \quad t \geq t_0. \]
\end{lemma}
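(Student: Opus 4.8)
The plan is to exploit the fact that each state-transition matrix is the \emph{unique} solution of its defining matrix-valued ODE with identity initial data. Recall that $\Phi(t,t_0)$ is characterized as the unique solution of
\[
\frac{d}{dt}\Phi(t,t_0) = A(t)\Phi(t,t_0), \qquad \Phi(t_0,t_0) = I,
\]
while $\Psi(t,t_0)$ is the unique solution of the analogous matrix problem with $A(t)$ replaced by $A(t)+cI$. Since the two asserted identities are mere algebraic rearrangements of one another, it suffices to verify the first one.

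First I would introduce the candidate $\tilde{\Phi}(t,t_0) := e^{-c(t-t_0)}\Psi(t,t_0)$ and simply check that it solves the problem characterizing $\Phi$. At $t=t_0$ the scalar prefactor equals one and $\Psi(t_0,t_0)=I$, so the correct initial condition is met. Differentiating via the product rule, the derivative of the scalar factor contributes a term $-c\,\tilde{\Phi}$, whereas differentiating $\Psi$ brings in the factor $A(t)+cI$; the two copies of $c$ cancel because $cI$, being a scalar multiple of the identity, commutes with $A(t)$ and factors freely through the exponential. What remains is exactly
\[
\frac{d}{dt}\tilde{\Phi}(t,t_0) = e^{-c(t-t_0)}\bigl[-cI+A(t)+cI\bigr]\Psi(t,t_0) = A(t)\tilde{\Phi}(t,t_0).
\]

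Then I would invoke uniqueness of solutions to linear matrix ODEs, which holds here because $A$ is continuous on $[t_0,\infty)$ and hence the right-hand side is Lipschitz on every compact time interval, to conclude $\tilde{\Phi}(t,t_0) = \Phi(t,t_0)$ for all $t \geq t_0$. The equivalent second identity then follows by multiplying through by $e^{c(t-t_0)}$.

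I do not anticipate any genuine obstacle: the whole argument rests on the elementary fact that $cI$ commutes with $A(t)$ and passes through the scalar exponential. An alternative route would be to substitute $A(\tau)+cI$ directly into the Peano--Baker series and resum the resulting terms, but this is considerably more cumbersome and offers no advantage over the uniqueness argument sketched above.
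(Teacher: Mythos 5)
Your proof is correct and follows essentially the same route as the paper: both define $\tilde{\Phi}(t,t_0):=e^{-c(t-t_0)}\Psi(t,t_0)$, verify by the product rule that it satisfies the ODE for $\Phi$ with identity initial data, and conclude by uniqueness of solutions to linear ODEs. No substantive differences.
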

\begin{proof} Let $\Phi(t,t_0)$ and $\Psi(t,t_0)$ be the state transition matrices of $\eqref{B-4-0}_1$ and $\eqref{B-4-0}_2$, respectively. To derive desired estimate, we set
\begin{equation} \label{B-4-1}
 \tilde\Phi(t,t_0):=e^{-c(t-t_0)}\Psi(t,t_0). 
\end{equation} 
and we will show that $ \tilde\Phi(t,t_0)$ satisfies  $\eqref{B-4-0}_1$ and the same initial data.  \newline

\noindent $\bullet$ (Equation): By direct estimate, we have
\begin{align}
\begin{aligned} \label{B-5}
\frac{d}{dt}\tilde\Phi(t,t_0)&=\frac{d}{dt}\big[e^{-c(t-t_0)}\Psi(t,t_0)\big] \\
&=-ce^{-c(t-t_0)}\Psi(t,t_0)+e^{-c(t-t_0)}\frac{d}{dt}\Psi(t,t_0)\\
&=-ce^{-c(t-t_0)}\Psi(t,t_0)+e^{-c(t-t_0)}[A(t)+cI]\Psi(t,t_0)\\
&=e^{-c(t-t_0)}A(t)\Psi(t,t_0)=A(t)\tilde\Phi(t,t_0).
\end{aligned}
\end{align}

\noindent $\bullet$ (Initial data): It is clear from \eqref{B-4-1} that 
\begin{equation}\label{B-6}
\tilde\Phi(t_0,t_0)=\Psi(t_0,t_0)=I.
\end{equation}
Finally, we can see that \eqref{B-5} and \eqref{B-6} satisfies the same ODE system and initial data. By the uniqueness theory of ODEs, we have
\[ \Phi(t,t_0) =  \tilde\Phi(t,t_0). \]
\end{proof}

\subsection{A reformulation of the TCS model} In this subsection, we introduce a ``{\it coldness}" variable which is a reciprocal of the temperature. We set
\[
\beta_i(t):=\frac{1}{\theta_i(t)},~~t > 0, \qquad  \beta_i^{in}:=\frac{1}{\theta_i^{in}},~~ i=1,\cdots,N.
\]
Then $\beta$ measures the inverse tempertaure, i.e. coldness. Then, the corresponding Cauchy problem for the continuous and discrete models \eqref{A-0} and \eqref{A-1} are
\begin{equation} \label{B-10}
\begin{cases}
\displaystyle \frac{dx_i}{dt}=v_i,\quad t>0, \quad i=1,2,\cdots, N,\\
\displaystyle \frac{dv_i}{dt}=\frac{1}{N}\sum_{j=1}^N \chi_{ij}\phi(\|x_i-x_j\|) (\beta_j v_j -\beta_iv_i),\\
\displaystyle \frac{d\beta_i}{dt}=\frac{1}{N}\sum_{j=1}^N \chi_{ij}\zeta(\|x_i -x_j\|)\beta_i^2 (\beta_j -\beta_i),\\
(x_i(0),v_i(0),\beta_i(0))=(x_i^{in},v_i^{in},\beta_i^{in}),
\end{cases}
\end{equation}
and
\begin{equation}\label{B-11}
\begin{cases}
\displaystyle x_i[t+1]=x_i[t]+h v_i[t],\quad t\in \bbn\cup\{0\}, \quad i=1,2,\cdots, N,\\
\displaystyle v_i[t+1]=v_i[t]+\frac{h}{N}\sum_{j=1}^N \chi_{ij}\phi(\|x_i[t]-x_j[t]\|)\Big(\beta_j[t]v_j[t]-\beta_i[t]v_i[t]\Big),\\
\displaystyle \frac{1}{\beta_i[t+1]} =\frac{1}{\beta_i[t]} +\frac{h}{N}\sum_{j=1}^N \chi_{ij}\zeta(\|x_i[t]-x_j[t]\|) \Big(\beta_i[t]-\beta_j[t]\Big),\\
\displaystyle (x_i[0],v_i[0],\beta_i[0])=(x_i^{in},v_i^{in},\beta_i^{in}).
\end{cases}
\end{equation}

\subsection{A review of previous results} In this subsection, we briefly review the known previous results on the emergent behaviors of the TCS model \eqref{A-0} with small diffusion velocities. The model \eqref{A-0} with all-to-all couplings with $\phi \equiv 1$ and $\zeta \equiv 1$ has been proposed in a recent work by Ha and Ruggeri \cite{H-R}. They derived the model \eqref{A-0} from the system of gas mixture which is a coupled system of reactive Euler systems based on reasonable physical settings such as spatial homogeneity, Galilean invariance, small diffusion velocity assumption and entropy principle (see \cite{H-R} for a detailed discussion). In their work, they derived an exponential flocking estimate for \eqref{A-0} as long as initial states satisfy a kind of small assumptions. Their work has been extended to several directions, e.g., nonexistence of mono-cluster flocking  \cite{H-K-R}, time-delay effect \cite{D-H-K-K}, uniform stability and its kinetic limit \cite{H-K-M-R-Z2}, global well-posedness of the hydrodynamic TCS model in \cite{H-K-M-R-Z1}, coupling with fluids 
\cite{C-H-J-K1, C-H-J-K2}, disctete TCS model \cite{H-K-L}, although they are all dealing with TCS ensemble over the complete graph. As far as the authors know, this is the first work dealing with the emergent dynamics of TCS ensemble other than the complete graph. 

\section{Emergence dynamics of the continuous model} \label{sec:3}
\setcounter{equation}{0}
In this section, we present an asymptotic flocking estimate for the continuous model \eqref{B-10}. In the sequel, we derive our flocking estimate in the following three steps.
\begin{itemize}
\item
First, we show the uniform boundedness of temperatures and monotonic properties of the diameter of the coldness, and  using these estimates, we derive an exponential asymptotic alignment of temperatures under a priori uniform boundedness condition of spatial diameter.

\vspace{0.2cm}

\item
Second, we derive asymptotic alignment of velocities under a priori uniform boundedness condition of spatial diameter.

\vspace{0.2cm}

\item
Finally, we present a sufficient condition leading to the uniform boundedness of spatial diameter in terms of system parameters. This leads to the mono-cluster flocking estimate.
\end{itemize}

\subsection{A priori temperature alignment} In this subsection, we show that the temperatures have some positive lower bound and upper bound. For convenience, we work with the system \eqref{B-10}. Next, we set initial maximum and minimum coldness $\beta^{in}_U$ and $\beta^{in}_L>0$ as follows:
\[
\beta_U^{in}:=\max_{1\leq i\leq N}\beta_i^{in} \qquad \mbox{and} \qquad \beta_L^{in}:=\min_{1\leq i\leq N}\beta_i^{in}.
\]
We also set
\[ {\mathcal B} := (\beta_1, \cdots, \beta_N),\quad \mathcal D(\mathcal B(t)):=\max_{1\leq i,j\leq N}|\beta_i(t)-\beta_j(t)|. \]
\begin{lemma}[Boundedness of temperatures]\label{L3.1}
Let $\{ (x_i,v_i,\beta_i) \}$ be a solution to the Cauchy problem \eqref{B-10} with positive initial temperatures $\beta^{in}_U < \infty$.  Then, we have 
\begin{eqnarray*}
&& (i)~\beta_L^{in} \leq \beta_i(t) \leq \beta_U^{in},\quad i=1,\cdots,N,\quad 0\leq t<\infty,\\
&& (ii)~\D({\mathcal B}(\cdot)) \mbox{ is monotone decreasing}.
\end{eqnarray*}
\end{lemma}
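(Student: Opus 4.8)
The plan is to prove both claims by analyzing the coldness dynamics governed by the third equation of \eqref{B-10}, namely
\[
\frac{d\beta_i}{dt}=\frac{1}{N}\sum_{j=1}^N \chi_{ij}\zeta(\|x_i-x_j\|)\beta_i^2(\beta_j-\beta_i).
\]
First I would establish the lower and upper bounds in $(i)$ via a standard extremal/comparison argument. Consider the extremal indices. Let $\beta_M(t):=\max_i \beta_i(t)$ and $\beta_m(t):=\min_i \beta_i(t)$; these are Lipschitz functions, so they are differentiable almost everywhere, and by the envelope (Danskin-type) argument their derivatives coincide almost everywhere with the derivative of the realizing component. At a time where index $i$ realizes the maximum, every term $\beta_j-\beta_i\le 0$ in the sum, while $\chi_{ij}\ge 0$, $\zeta\ge 0$ and $\beta_i^2\ge 0$, so $\frac{d}{dt}\beta_M(t)\le 0$; symmetrically, $\frac{d}{dt}\beta_m(t)\ge 0$. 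Integrating gives $\beta_m(t)\ge \beta_m(0)=\beta_L^{in}$ and $\beta_M(t)\le \beta_M(0)=\beta_U^{in}$ for all $t\ge 0$, which is exactly $(i)$. A minor preliminary point is to note that positivity of the initial coldness together with this lower bound keeps $\beta_i$ bounded away from zero, so $\theta_i=1/\beta_i$ stays finite and the solution does not blow up; this also legitimizes working with coldness on $[0,\infty)$.

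For $(ii)$, the monotone decay of $\D(\mathcal B(t))=\beta_M(t)-\beta_m(t)$ follows by differentiating the difference of the two extremal trajectories. Since $\frac{d}{dt}\beta_M\le 0$ and $\frac{d}{dt}\beta_m\ge 0$ almost everywhere from the computation above, we get
\[
\frac{d}{dt}\D(\mathcal B(t))=\frac{d}{dt}\beta_M(t)-\frac{d}{dt}\beta_m(t)\le 0
\]
for almost every $t$, and hence $\D(\mathcal B(\cdot))$ is monotone decreasing. I would want to state this carefully: $\D(\mathcal B(t))$ is Lipschitz (being a max of finitely many Lipschitz functions), hence absolutely continuous, so nonpositivity of the a.e.\ derivative genuinely yields monotone decrease.

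The only real subtlety, and the step I expect to be the main obstacle, is the rigorous handling of the nondifferentiability of $\beta_M$ and $\beta_m$ at crossing times where the realizing index switches. The clean way to dispatch this is the one-sided Dini-derivative lemma: for a max of finitely many $C^1$ functions, the upper right Dini derivative at any point equals the maximum of the derivatives of the active (realizing) components, and each such derivative is $\le 0$ by the sign argument above; a function that is continuous with nonpositive upper Dini derivative everywhere is nonincreasing. I would invoke this standard fact rather than reprove it, and use the analogous lower-derivative statement for $\beta_m$. Everything else is a direct sign inspection of the summand, so once the extremal-derivative bookkeeping is in place both $(i)$ and $(ii)$ follow immediately.
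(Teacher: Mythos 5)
Your proposal is correct and follows essentially the same route as the paper: extremal indices for $\beta_M$ and $\beta_m$, a sign inspection of the summand to get $\frac{d}{dt}\beta_M\le 0$ and $\frac{d}{dt}\beta_m\ge 0$ almost everywhere, and then $(ii)$ by differencing. Your extra care with Dini derivatives at index-switching times is a welcome refinement of a step the paper treats more tersely, but it is the same argument.
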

\begin{proof}
(i)~First, we define the maximal and minimal values for coldness as follows: 
\[
\beta_M(t):=\max_{1\leq i\leq N}\beta_i(t),\quad \beta_m(t):=\min_{1\leq i\leq N}\beta_i(t),\quad t\geq 0.
\]
Then, for each $t>0$, we choose extremal indices $1\leq m_t, ~M_t\leq N$ satisfying
\[  \beta_m(t)=\beta_{m_t}(t) \quad \mbox{and} \quad \beta_M(t)=\beta_{M_t}(t). \]
Note that for a.e. $t > 0$, $\beta_m$ and $\beta_M$ satisfy 
\begin{align}
\begin{aligned} \label{C-1}
\frac{d\beta_m(t)}{dt} &=\frac{\beta_{m_t}(t)^2}{N}\sum_{j=1}^N \chi_{{m_t}j}\zeta(\|x_{m_t}(t)-x_j(t)\|)\Big(\beta_j(t)-\beta_{m_t}(t)\Big)\geq0, \\
\frac{d\beta_M(t)}{dt} &=\frac{\beta_{M_t}(t)^2}{N}\sum_{j=1}^N \chi_{{M_t}j}\zeta(\|x_{M_t}(t)-x_j(t)\|)\Big(\beta_j(t)-\beta_{M_t}(t)\Big)\leq0.
\end{aligned}
\end{align}
Thus, minimal and maximal coldness are non-decreasing and non-increasing along the flow \eqref{B-10}.

\vspace{0.2cm}

\noindent (ii)~Recall the diameter for coldness:
\[ \D({\mathcal B}(t)) = \max_{1\leq i \leq N} \beta_i(t) - \min_{1\leq i \leq N} \beta_i(t), \quad  t \geq 0. \]
We use the above defining relation and \eqref{C-1} to get the desired estimate:
\[
\frac{d}{dt}\D({\mathcal B}(t))=\frac{d}{dt}\big(\beta_M(t)-\beta_m(t)\big)\leq0,\quad \mbox{a.e. }t>0.
\]
\end{proof}
Next, we study the exponential decay of $\D({\mathcal B})$ using a more refined argument.  First, we rewrite $\eqref{B-10}_3$ as follows.
\begin{equation}  \label{C-1-1}
\frac{d\beta_i}{dt}=-\frac{1}{N} \beta_i^2 \Big[ \beta_i  \Big( \sum_{j=1}^N \chi_{ij}\zeta(\|x_i -x_j\|) \Big) - \sum_{j=1}^N \chi_{ij}\zeta(\|x_i -x_j\|) \beta_j \Big]
\end{equation}
In order to rewrite \eqref{C-1-1} in a more compact form, we  define an $N\times N$ matrix $L(t)$ by
\begin{equation} \label{C-1-2}
L(t):=D(t)-A(t),
\end{equation}
where the matrices $A(t)=(a_{ij}(t))$ and $D(t)=\operatorname{diag}(d_1(t),\cdots,d_N(t))$ are defined by the following relations:
\begin{equation} \label{C-1-3}
a_{ij}(t):=\chi_{ij}\zeta(\|x_i(t)-x_j(t)\|)\quad\mbox{and}\quad d_i(t)=\sum_{j=1}^N \chi_{ij}\zeta(\|x_i(t)-x_j(t)\|).
\end{equation}
And we also define
\begin{equation} \label{C-1-4}
\Gamma(t):=\mbox{diag}\big(\beta_1(t),\cdots,\beta_N(t)\big).
\end{equation}
Then, we see from \eqref{C-1-1} that $\eqref{B-10}_3$ can be written as 
\begin{equation} \label{C-2}
\frac{d}{dt} {\mathcal B}(t) =-\frac{1}{N}\Gamma(t)^2L(t) {\mathcal B}(t).
\end{equation}
Let $\Phi(t_2,t_1)$ be the state transition matrix associated with \eqref{C-2}. Then, for any given $\delta > 0$, we derive the solution formula for ${\mathcal B}$:
\begin{equation} \label{C-3}
 {\mathcal B}(m\delta) = \Phi\Big(m\delta,(m-1)\delta\Big) {\mathcal B}((m-1) \delta), \quad  m \in \bbn. 
\end{equation} 

\begin{lemma} \label{L3.2}
Let $\{ (x_i,v_i,\beta_i)\}$ be a solution to \eqref{B-10} satisfying a priori condition:
\begin{equation}\label{C-10}
\sup_{0\leq t<\infty}\D(X(t))\leq x^{\infty} < \infty.
\end{equation}
Then the following assertions hold.
\begin{enumerate}
\item
The ergodicity coefficient $\mu \Big(\Phi\big(m\delta,(m-1)\delta\big)\Big)$ satisfies
\[ \mu \Big(\Phi\big(m\delta,(m-1)\delta\big)\Big) \geq   C_1 \zeta(x^{\infty})^{\gamma_g},  \]
where a positive constant $C_1$ is given by the following relation:
\[
C_1=C_1(\delta):=e^{-\kappa_2(\beta_U^{in})^2\delta}\cdot\frac{1}{\gamma_g!}\Big(\delta\frac{(\beta_L^{in})^2}{N}\Big)^{\gamma_g}.
\]
\item
The state transition matrix $ \Phi\big(m\delta,(m-1)\delta\big)$ is stochastic.
\end{enumerate}
\end{lemma}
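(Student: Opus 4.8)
The plan is to extract both claims from the algebraic structure of the generator $\mathcal{M}(t):=-\frac1N\Gamma(t)^2 L(t)$ appearing in \eqref{C-2}. A direct computation from \eqref{C-1-2}--\eqref{C-1-4} shows that for $i\neq j$ the off-diagonal entry is $\mathcal{M}_{ij}(t)=\frac1N\beta_i(t)^2\chi_{ij}\zeta(\|x_i-x_j\|)\ge0$, whereas $\mathcal{M}_{ii}(t)=-\frac1N\beta_i(t)^2\sum_{l\neq i}\chi_{il}\zeta(\|x_i-x_l\|)\le0$, so that each row of $\mathcal{M}(t)$ sums to zero. The vanishing of the row sums yields part (2) immediately: the constant vector $\mathbf 1:=(1,\dots,1)^\top$ solves $\dot y=\mathcal{M}(t)y$, so by uniqueness $\Phi(t,t_0)\mathbf 1=\mathbf 1$, i.e.\ all row sums of $\Phi$ equal $1$; together with the nonnegativity of $\Phi$ shown below this proves that $\Phi\big(m\delta,(m-1)\delta\big)$ is stochastic.

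To obtain nonnegativity and the ergodicity bound I would shift the generator. Set $c:=\kappa_2(\beta_U^{in})^2$, write $\mathcal{M}_c(t):=\mathcal{M}(t)+cI$, and let $\Psi$ be the state-transition matrix of $\dot\xi=\mathcal{M}_c(t)\xi$, so that Lemma~\ref{L2.3} gives $\Phi(t,t_0)=e^{-c(t-t_0)}\Psi(t,t_0)$. The shift is chosen precisely to make $\mathcal{M}_c(t)$ entrywise nonnegative: the off-diagonal entries are untouched, while on the diagonal Lemma~\ref{L3.1} gives $\beta_i\le\beta_U^{in}$ and $\zeta\le\kappa_2$, hence $\frac1N\beta_i^2\sum_{l\neq i}\chi_{il}\zeta(\|x_i-x_l\|)\le\frac{N-1}{N}\kappa_2(\beta_U^{in})^2\le c$. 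Consequently every term of the Peano--Baker series for $\Psi$ is nonnegative, so $\Psi\ge0$ and $\Phi=e^{-c(t-t_0)}\Psi\ge0$.

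For the quantitative estimate I would dominate $\mathcal{M}_c(t)$ from below by a constant matrix. Under \eqref{C-10} one has $\|x_i-x_j\|\le x^\infty$, so $\zeta(\|x_i-x_j\|)\ge\zeta(x^\infty)$; using $\beta_i\ge\beta_L^{in}$, $\chi_{ii}=1$, and the elementary inequality $(\beta_L^{in})^2\zeta(x^\infty)\le\kappa_2(\beta_U^{in})^2$, the same diagonal computation as above shows that both the off-diagonal and the diagonal entries of $\mathcal{M}_c(t)$ dominate the corresponding entries of the constant matrix $\Lambda:=\frac1N(\beta_L^{in})^2\zeta(x^\infty)\,\chi$, i.e.\ $\mathcal{M}_c(t)\ge\Lambda\ge0$ for all $t$. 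A term-by-term comparison in the Peano--Baker series, using that products of nonnegative matrices respect the entrywise order, then gives $\Psi\big(m\delta,(m-1)\delta\big)\ge e^{\delta\Lambda}\ge\frac{\delta^{\gamma_g}}{\gamma_g!}\Lambda^{\gamma_g}$, where only the $\gamma_g$-th term is retained. Finally I would apply the monotonicity \eqref{B-1-1} and the homogeneity of the ergodicity coefficient together with Lemma~\ref{L2.1} applied to $\Lambda$ — whose digraph is $\G(\chi)$ with positive diagonal and smallest depth $\gamma_g$, and whose smallest positive entry is $\underline\lambda=\frac1N(\beta_L^{in})^2\zeta(x^\infty)$ — to obtain $\mu(\Psi)\ge\frac{\delta^{\gamma_g}}{\gamma_g!}\mu(\Lambda^{\gamma_g})\ge\frac1{\gamma_g!}\big(\delta\tfrac{(\beta_L^{in})^2}{N}\big)^{\gamma_g}\zeta(x^\infty)^{\gamma_g}$; multiplying by $e^{-c\delta}$ via $\mu(\Phi)=e^{-c\delta}\mu(\Psi)$ reproduces the constant $C_1\zeta(x^\infty)^{\gamma_g}$.

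The main obstacle is the nonnegativity/domination step: verifying that the shifted generator stays entrywise nonnegative and, more delicately, dominates $\Lambda$ on the diagonal hinges on the exact choice $c=\kappa_2(\beta_U^{in})^2$ and on the cancellation that $\mathcal{M}_{ii}(t)$ contains no self-loop term (the assumption $\chi_{ii}=1$ drops out of the Laplacian $L=D-A$). Once $\mathcal{M}_c(t)\ge\Lambda$ is in hand, the remaining ingredients — monotonicity of the Peano--Baker series, truncation to the $\gamma_g$-th power, and the appeal to Lemma~\ref{L2.1} — are routine and are exactly what fixes the explicit constant $C_1$.
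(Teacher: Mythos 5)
Your argument is correct and follows essentially the same route as the paper's proof: the same shift by $\kappa_2(\beta_U^{in})^2 I$ combined with Lemma~\ref{L2.3}, the same entrywise domination of the shifted generator by the constant matrix $\frac{(\beta_L^{in})^2}{N}\zeta(x^\infty)\chi$ (the paper's $\frac{(\beta_L^{in})^2}{N}A^{\infty}$), the same term-by-term Peano--Baker comparison truncated at the $\gamma_g$-th power, the same appeal to \eqref{B-1-1} and Lemma~\ref{L2.1}, and the same constant-vector argument for stochasticity. The only cosmetic difference is that you apply Lemma~\ref{L2.1} to $\Lambda$ directly using homogeneity of $\mu$, whereas the paper factors out the scalar first and applies it to $A^{\infty}$.
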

\begin{proof} 
\noindent (1)~We claim
\begin{equation} \label{C-4}
 \Phi\Big(m\delta,(m-1)\delta\Big) \geq C_1 (A^{\infty})^{\gamma_g}\geq0, 
\end{equation} 
 where $A^{\infty}=(a_{ij}^{\infty})$ is a nonnegative matrix defined by $a_{ij}^{\infty}:=\chi_{ij}\zeta(x^{\infty}).\\$
{\it Proof of claim \eqref{C-4}}:  We use \eqref{C-1-2}, \eqref{C-1-3} and \eqref{C-1-4} to estimate
 the coefficient matrix for \eqref{C-2} as follows.
\begin{equation*}
-\frac{1}{N}\Gamma(t)^2L(t)=\frac{1}{N}\Gamma(t)^2(A(t)-D(t))\geq\frac{(\beta_L^{in})^2}{N}A^{\infty}-\kappa_2(\beta_U^{in})^2I.
\end{equation*}
Now, we decompose the coefficient matrix $-\frac{1}{N}\Gamma(t)^2L(t)$ into the sum of the following two matrices:
\begin{equation} \label{C-5-1}
-\frac{1}{N}\Gamma(t)^2L(t)=\left(-\frac{1}{N}\Gamma(t)^2L(t)+\kappa_2(\beta_U^{in})^2I\right)-\kappa_2(\beta_U^{in})^2I.
\end{equation}
The terms in the parenthesis of \eqref{C-5-1} can also be estimated as follows:
\begin{equation}\label{C-6}
-\frac{1}{N}\Gamma(t)^2L(t)+\kappa_2(\beta_U^{in})^2I\geq \frac{(\beta_L^{in})^2}{N}A^{\infty} \geq0.
\end{equation}
On the other hand, for any $0\leq t_1<t_2<\infty$, let $\Phi(t_2,t_1)$ and $\Psi(t_2,t_1)$ be the state-transition matrices of $-\frac{1}{N}\Gamma(t)^2L(t)$ and $-\frac{1}{N}\Gamma(t)^2L(t)+\kappa_2(\beta_U^{in})^2I$ on $[t_1,t_2]$, respectively. 
Then it follows from Lemma \ref{L2.3} that
\begin{equation} \label{C-7}
\Phi\left(t_2,t_1\right)=e^{-\kappa_2(\beta_U^{in})^2(t_2-t_1)}\Psi\left(t_2,t_1\right).
\end{equation}
In \eqref{C-6}, we can use the Peano-Baker series to obtain
\begin{align} \label{C-8}
\begin{aligned}
\Psi\left(t_2,t_1\right)&=I+\sum_{n=1}^\infty \int_{t_1}^{t_2}\int_{t_1}^{\tau_1}\cdots\int_{t_1}^{\tau_{n-1}}\Big(\big(-\frac{1}{N}\Gamma(\tau_1)^2L(\tau_1)+\kappa_2(\beta_U^{in})^2I\big)\cdots \\
&\hspace{2cm}\big(-\frac{1}{N}\Gamma(\tau_n)^2L(\tau_n)+\kappa_2(\beta_U^{in})^2I\big) \Big)d\tau_n\cdots d\tau_1\\
&\geq I+\sum_{n=1}^\infty \int_{t_1}^{t_2}\int_{t_1}^{\tau_1}\cdots\int_{t_1}^{\tau_{n-1}}\Big(\frac{(\beta_L^{in})^2}{N}A^{\infty} \Big)^n d\tau_n\cdots d\tau_1\\
&= I+\sum_{n=1}^\infty\frac{1}{n!} (t_2-t_1)^n\Big(\frac{(\beta_L^{in})^2}{N}A^{\infty} \Big)^n \\
&= \exp\left((t_2-t_1)\frac{(\beta_L^{in})^2}{N}A^{\infty} \right).
\end{aligned}
\end{align}
For a fixed $m\in\bbn$, we combine \eqref{C-7} and \eqref{C-8} and put $t_1=(m-1)\delta$, $t_2=m\delta$ to obtain 
\begin{align}
\begin{aligned} \label{C-8-1}
\Phi\Big(m\delta,(m-1)\delta\Big)&\geq e^{-\kappa_2(\beta_U^{in})^2\delta} \exp\Big[\delta\frac{(\beta_L^{in})^2}{N}A^{\infty} \Big] \\
&=e^{-\kappa_2(\beta_U^{in})^2\delta}\Big[I+\sum_{n=1}^\infty\frac{1}{n!}\Big(\delta\frac{(\beta_L^{in})^2}{N}A^{\infty} \Big)^n\Big]\\
&\geq e^{-\kappa_2(\beta_U^{in})^2\delta}\cdot\frac{1}{\gamma_g!}\Big(\delta\frac{(\beta_L^{in})^2}{N}\Big)^{\gamma_g} (A^{\infty})^{\gamma_g} \\
&= C_1 (A^{\infty})^{\gamma_g}\geq0,
\end{aligned}
\end{align}
which proves the claim \eqref{C-4}.\\  \eqref{C-8-1} and \eqref{B-1-1} yield
\begin{equation*}
\mu\Big(\Phi\Big(m\delta,(m-1)\delta\Big)\Big)\geq C_1\mu((A^{\infty})^{\gamma_g})\geq C_1 \zeta(x^{\infty})^{\gamma_g}.
\end{equation*}
where $\mu$ is the ergodicity coefficient defined in \eqref{B-1}, and the last inequality is due to Lemma \ref{L2.1}.\newline

(2)~By \eqref{C-8-1} $\Phi\big(m\delta,(m-1)\delta\big)$ is nonnegative, so it remains to show that each of its rows sums to 1. Note that the constant state $\xi(t):=[\xi_1(t),\cdots,\xi_N(t)]^\top\equiv [1,\cdots,1]^\top$ is a solution to $\eqref{C-2}$, i.e.
\[
\frac{d}{dt} {\xi}(t) =-\frac{1}{N}\Gamma(t)^2L(t) {\xi}(t).
\]
Hence, it satisfies \eqref{C-3}:
\[ [1,\cdots,1]^\top=\Phi(m\delta,(m-1)\delta)[1,\cdots,1]^\top. \]
This implies that $\Phi\big(m\delta,(m-1)\delta\big)$ is stochastic. 
 
\end{proof}
Next, we are ready to present a priori temperature alignment based on Lemma \ref{L3.2}. 
\begin{proposition}\label{P3.1}
Let $\{ (x_i,v_i,\beta_i)\}$ be a solution to \eqref{B-10} satisfying a priori condition \eqref{C-10}.
Then, we have the exponential decay of $\D({\mathcal B}(t))$: For any given $\delta>0$, we have
\[
\D({\mathcal B}(t))\leq \Big(1-C_1\zeta(x^{\infty})^{\gamma_g} \Big)^{\lfloor \frac{t}{\delta}\rfloor}\D({\mathcal B}(0)),\quad t\geq0,
\]
where $C_1=C_1(\delta)$ is the constant defined in Lemma \ref{L3.2}.
\end{proposition}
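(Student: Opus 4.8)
The plan is to turn the differential equation \eqref{C-2} for the coldness into a discrete-time contraction at the sampling times $t=m\delta$, and then to use the monotonicity of $\D(\mathcal{B}(\cdot))$ from Lemma \ref{L3.1} to cover the intermediate times. Fix $\delta>0$ throughout, so that $C_1=C_1(\delta)$ is the constant of Lemma \ref{L3.2}.

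First I would invoke the solution formula \eqref{C-3}, namely $\mathcal{B}(m\delta)=\Phi\big(m\delta,(m-1)\delta\big)\mathcal{B}((m-1)\delta)$ for $m\in\bbn$. Since the coldness components $\beta_i$ are scalars, this is precisely the setting of Lemma \ref{L2.2} in the case $d=1$, with $A=\Phi\big(m\delta,(m-1)\delta\big)$, $Z=\mathcal{B}((m-1)\delta)$, $W=\mathcal{B}(m\delta)$, and vanishing perturbation $B=0$. The requirement that $A$ be stochastic is supplied by Lemma \ref{L3.2}(2). Lemma \ref{L2.2} then yields the one-step estimate
\[
\D(\mathcal{B}(m\delta))\leq\Big(1-\mu\big(\Phi(m\delta,(m-1)\delta)\big)\Big)\D(\mathcal{B}((m-1)\delta)).
\]

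Next I would substitute the lower bound $\mu\big(\Phi(m\delta,(m-1)\delta)\big)\geq C_1\zeta(x^{\infty})^{\gamma_g}$ from Lemma \ref{L3.2}(1). Since $\Phi\big(m\delta,(m-1)\delta\big)$ is stochastic, its ergodicity coefficient lies in $[0,1]$, so that $0\leq 1-C_1\zeta(x^{\infty})^{\gamma_g}<1$ is a genuine contraction factor; replacing $\mu$ by its lower bound gives
\[
\D(\mathcal{B}(m\delta))\leq\Big(1-C_1\zeta(x^{\infty})^{\gamma_g}\Big)\D(\mathcal{B}((m-1)\delta)).
\]
Iterating this inequality from $m$ down to $0$ produces
\[
\D(\mathcal{B}(m\delta))\leq\Big(1-C_1\zeta(x^{\infty})^{\gamma_g}\Big)^{m}\D(\mathcal{B}(0)),\quad m\in\bbn\cup\{0\}.
\]

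Finally, for arbitrary $t\geq 0$ I would set $m:=\lfloor t/\delta\rfloor$, so that $m\delta\leq t$. At this point the monotonicity statement in Lemma \ref{L3.1}(ii) is essential: since $\D(\mathcal{B}(\cdot))$ is non-increasing and $t\geq m\delta$, we obtain $\D(\mathcal{B}(t))\leq\D(\mathcal{B}(m\delta))$, and combining with the previous display gives the claimed bound. I do not anticipate a serious obstacle, since the genuine work---the stochasticity and the ergodicity-coefficient bound for the state-transition matrix---has already been carried out in Lemma \ref{L3.2}, and the contraction is furnished by Lemma \ref{L2.2}. The only point requiring care is that the discrete contraction is available solely at the grid points $m\delta$, which is exactly why the monotonicity of the coldness diameter must be used to interpolate the estimate to all $t\geq 0$.
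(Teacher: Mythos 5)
Your proposal is correct and follows essentially the same route as the paper's own proof: the solution formula \eqref{C-3} combined with Lemma \ref{L2.2} (with vanishing perturbation) and the stochasticity and ergodicity-coefficient bounds of Lemma \ref{L3.2} give the one-step contraction at the grid points, and the monotonicity of $\D(\mathcal{B}(\cdot))$ from Lemma \ref{L3.1} interpolates to all $t\geq 0$. No gaps.
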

\begin{proof} 
Since $\Phi\big(m\delta,(m-1)\delta\big)$ is stochastic (Lemma \ref{L3.2}), we can combine \eqref{C-3}, Lemma \ref{L2.2} and Lemma \ref{L3.2} to obtain
\begin{align}
\begin{aligned} \label{C-11}
\D\left({\mathcal B}(m\delta)\right)&\leq\left(1-\mu\Big(\Phi\Big(m\delta,(m-1)\delta\Big)\Big)\right)\D({\mathcal B}((m-1)\delta))\\
&\leq \left(1-C_1\zeta(x^\infty)^{\gamma_g}\right)\D({\mathcal B}((m-1)\delta))\\
&\leq\cdots\leq \left(1-C_1\zeta(x^\infty)^{\gamma_g}\right)^m\D({\mathcal B}(0)),\quad m\in\bbn.
\end{aligned}
\end{align}
So for any real $t=\delta p\geq0$, we use \eqref{C-11} and Lemma \ref{L3.1} to get
\begin{align*}
\begin{aligned}
\D({\mathcal B}(t))&=\D\left({\mathcal B}\Big(\delta p\Big)\right)\leq \D\left({\mathcal B}\Big(\delta\lfloor p\rfloor\Big)\right) 
\leq \left(1-C_1\zeta(x^\infty)^{\gamma_g}\right)^{\lfloor p\rfloor}\D({\mathcal B}(0))\\
&= \left(1-C_1\zeta(x^\infty)^{\gamma_g}\right)^{\lfloor \frac{t}{\delta}\rfloor}\D({\mathcal B}(0)).
\end{aligned}
\end{align*}
\end{proof}
\subsection{A priori velocity alignment} In this subsection, we provide velocity alignment estimate under the a priori assumption \eqref{C-10}. For notational simplicity, we introduce the following notation:
\[
u_i(t):=\frac{v_i(t)}{\theta_i(t)}=\beta_i(t)v_i(t),\quad i=1,\cdots,N,\quad \mbox{and}\quad R_u(t):=\max_{1\leq i\leq N}\|u_i(t)\|,\quad t\geq0.
\]
To derive the velocity alignment, we use a bootstrapping argument, i.e., first we derive a uniform boundedness of velocity diameter, and then using the differential inequalities for velocity diameter, we improve our rough boundedness to the exponential decay of the velocity diameter.  As a first step, we prove the boundedness of velocities.
\begin{lemma}[Boundedness of velocities] \label{L3.3}
Let $\{ (x_i,v_i,\beta_i)\}$ be a solution to \eqref{B-10} satisfying a priori condition \eqref{C-10}. Then, velocities of the particles are uniformly bounded: for any giiven $\delta>0$ we have
\[
\|v_i(t)\|\leq \frac{1}{\beta_L^{in}}R_u(0)\exp\left(\frac{\kappa_2\delta\beta^{in}_U\D({\mathcal B}(0))}{C_1\zeta(x^{\infty})^{\gamma_g}}\right)=:R_V^c=R_V^c(x^{\infty},\delta),\quad i=1,\cdots,N,\quad t\geq0,
\]
where $C_1=C_1(\delta)$ is the constant defined in Lemma \ref{L3.2}.
\end{lemma}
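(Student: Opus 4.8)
The plan is to control the auxiliary quantity $R_u(t) = \max_{1\leq i\leq N}\|u_i(t)\|$ first, via a Gr\"onwall-type argument, and only at the very end convert this into a bound on $\|v_i\|$ using the uniform lower bound $\beta_i(t)\geq\beta_L^{in}$ from Lemma \ref{L3.1} together with $v_i = u_i/\beta_i$. First I would derive the governing ODE for $u_i=\beta_iv_i$. Differentiating and substituting $\eqref{B-10}_2$, $\eqref{B-10}_3$ (and recognizing $\beta_jv_j=u_j$), I expect
\[
\frac{du_i}{dt} = \frac{\beta_i}{N}\sum_{j=1}^N \chi_{ij}\phi(\|x_i-x_j\|)(u_j - u_i) + \frac{\beta_i u_i}{N}\sum_{j=1}^N \chi_{ij}\zeta(\|x_i-x_j\|)(\beta_j - \beta_i),
\]
where the first sum is the usual velocity-alignment term and the second is a scalar multiple of $u_i$ inherited from the coldness dynamics.

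The key step is a differential inequality for $R_u$. I would fix a time $t$ and an extremal index $i_t$ with $\|u_{i_t}(t)\| = R_u(t)$, then compute $\tfrac12\frac{d}{dt}\|u_{i_t}\|^2 = \langle u_{i_t},\dot u_{i_t}\rangle$. For the alignment term, the Cauchy--Schwarz estimate $\langle u_{i_t}, u_j - u_{i_t}\rangle \leq \|u_{i_t}\|(\|u_j\| - \|u_{i_t}\|)\leq 0$ holds because $\|u_j\|\leq R_u = \|u_{i_t}\|$, so that whole sum contributes a nonpositive amount; this is the decisive sign observation. For the coldness term I would bound $\beta_{i_t}\leq\beta_U^{in}$, $\zeta\leq\kappa_2$, $|\beta_j-\beta_{i_t}|\leq\D({\mathcal B}(t))$, and use that at most $N$ of the $\chi_{i_t j}$ are nonzero, yielding a contribution $\leq \kappa_2\beta_U^{in}\D({\mathcal B}(t))\|u_{i_t}\|^2$. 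Combining these and invoking the standard Rademacher/Danskin argument for the time derivative of a maximum gives, for a.e. $t>0$,
\[
\frac{d}{dt} R_u(t) \leq \kappa_2\beta_U^{in}\,\D({\mathcal B}(t))\, R_u(t).
\]

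Next I would apply Gr\"onwall's inequality to obtain $R_u(t)\leq R_u(0)\exp\big(\kappa_2\beta_U^{in}\int_0^t \D({\mathcal B}(s))\,ds\big)$ and then integrate the decay estimate from Proposition \ref{P3.1}, namely $\D({\mathcal B}(s))\leq(1-C_1\zeta(x^{\infty})^{\gamma_g})^{\lfloor s/\delta\rfloor}\D({\mathcal B}(0))$. Splitting $[0,\infty)$ into the intervals $[m\delta,(m+1)\delta)$ and summing the resulting geometric series gives
\[
\int_0^\infty \D({\mathcal B}(s))\,ds \leq \frac{\delta}{C_1\zeta(x^{\infty})^{\gamma_g}}\,\D({\mathcal B}(0)).
\]
Substituting this back and using $\|v_i(t)\|=\|u_i(t)\|/\beta_i(t)\leq R_u(t)/\beta_L^{in}$ produces exactly the claimed constant $R_V^c$.

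The main technical points I expect to be delicate are twofold. First, getting the $u_i$-equation right and verifying that the alignment term is genuinely nonpositive at the extremal index; here one must carry the scalar prefactor $\beta_i$ through and note that it does not spoil the sign since it is positive. Second, the convergence of $\int_0^\infty \D({\mathcal B})\,ds$ hinges on the ratio $\rho := 1 - C_1\zeta(x^{\infty})^{\gamma_g}$ lying in $[0,1)$ so that $\sum_{m\geq 0}\delta\rho^m = \delta/(1-\rho)$ converges; I would justify this by recalling that $\Phi\big(m\delta,(m-1)\delta\big)$ is stochastic (Lemma \ref{L3.2}), whence its ergodicity coefficient satisfies $C_1\zeta(x^{\infty})^{\gamma_g}\leq\mu(\Phi)\leq 1$.
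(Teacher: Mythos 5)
Your proposal is correct and follows essentially the same route as the paper: the paper likewise works with $R_u(t)=\max_i\|u_i(t)\|$, splits $\tfrac{d}{dt}\|u_i\|^2$ into the coldness contribution (bounded by $2\kappa_2\beta_U^{in}\D(\mathcal B)\|u_i\|^2$) and the alignment contribution (nonpositive at the extremal index by the same Cauchy--Schwarz sign observation), derives $\tfrac{d}{dt}R_u\leq\kappa_2\beta_U^{in}\D(\mathcal B)R_u$, and closes with Gr\"onwall plus the geometric-series integration of Proposition \ref{P3.1} before dividing by $\beta_L^{in}$. The only cosmetic difference is that you write the ODE for $u_i$ directly while the paper differentiates $\beta_i^2\|v_i\|^2$ by the product rule; the estimates are identical.
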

\begin{proof} For the desired estimate,  it suffices to derive an estimate:
\[
R_u(t)\leq R_u(0)\exp\left(\frac{\kappa_2\delta\beta_U^{in}\D({\mathcal B}(0))}{C_1\zeta(x^{\infty})^{\gamma_g}}\right),\quad t\geq0.
\]
First, we derive a differential inequality of $R_u$. For each $i=1,\cdots,N$, we have
\begin{equation}\label{C-11-1}
\frac{d}{dt}\|u_i\|^2=\frac{d}{dt}(\beta_i^2\|v_i\|^2)=2\beta_i\|v_i\|^2\frac{d\beta_i}{dt} + 2\beta_i^2\Big\langle v_i,\frac{d v_i}{dt}\Big\rangle,\quad t>0.
\end{equation}
We estimate the two terms of the right-hand side of \eqref{C-11-1}. We have
\begin{align}\label{C-12}
\begin{aligned}
2\beta_i\|v_i\|^2\frac{d\beta_i}{dt}&=\frac{2\beta_i\|v_i\|^2}{N}\sum_{j=1}^N \chi_{ij}\zeta(\|x_i-x_j\|)\beta_i^2(\beta_j-\beta_i)\\
&=\frac{2\beta_i\|u_i\|^2}{N}\sum_{j=1}^N \chi_{ij}\zeta(\|x_i-x_j\|)(\beta_j-\beta_i)\\
&\leq\frac{2\beta_i\|u_i\|^2}{N}\sum_{j=1}^N \kappa_2\D({\mathcal B}) =2\kappa_2\beta_i\|u_i\|^2\D({\mathcal B}),
\end{aligned}
\end{align}
and
\begin{align}\label{C-13}
\begin{aligned}
2\beta_i^2\Big\langle v_i,\frac{d v_i}{dt}\Big\rangle&=2\beta_i^2\left\langle v_i,\frac{1}{N}\sum_{j=1}^N \chi_{ij}\phi(\|x_i-x_j\|)\Big(\beta_jv_j-\beta_iv_i\Big)\right\rangle\\
&=2\beta_i\left\langle u_i,\frac{1}{N}\sum_{j=1}^N \chi_{ij}\phi(\|x_i-x_j\|)\Big(u_j-u_i\Big)\right\rangle\\
&=\frac{2\beta_i}{N}\sum_{j=1}^N \chi_{ij}\phi(\|x_i-x_j\|)\Big(\langle u_i,u_j\rangle-\langle u_i,u_i\rangle\Big)\\
&\leq\frac{2\beta_i}{N}\sum_{j=1}^N \chi_{ij}\phi(\|x_i-x_j\|)\Big(\| u_i\|\|u_j\|-\|u_i\|^2\Big)\\
&\leq\frac{2\beta_i\|u_i\|}{N}\sum_{j=1}^N \chi_{ij}\phi(\|x_i-x_j\|)\big(\|u_j\|-\|u_i\|\big).
\end{aligned}
\end{align}
We combine \eqref{C-11-1}, \eqref{C-12}, and \eqref{C-13} to obtain 
\[
\frac{d}{dt}\|u_i\|^2\leq2\kappa_2\beta_i\D({\mathcal B})\|u_i\|^2+\frac{2\beta_i\|u_i\|}{N}\sum_{j=1}^N \chi_{ij}\phi(\|x_i-x_j\|)\big(\|u_j\|-\|u_i\|\big),\quad t>0.
\]
For each $t>0$, we take a maximal index $i_t$ satisfing $R_u(t)=\|u_{i_t}(t)\|$. Then we have
\begin{align*}
\begin{aligned}
\frac{d}{dt}R_u(t)^2&\leq 2\kappa_2\beta_{i_t}\D({\mathcal B})R_u(t)^2+\frac{2\beta_{i_t}R_u(t)}{N}\sum_{j=1}^N \chi_{{i_t}j}\phi(\|x_{i_t}-x_j\|)\big(\|u_j\|-R_u(t)\big)\\
&\leq2\kappa_2\beta_{i_t}\D({\mathcal B})R_u(t)^2\leq2\kappa_2\beta_U^{in}\D({\mathcal B}(t))R_u(t)^2,\quad\mbox{a.e. }t>0.
\end{aligned}
\end{align*}
Hence
\[
2R_u(t)\frac{d}{dt}R_u(t)\leq2\kappa_2\beta^{in}_U\D({\mathcal B}(t))R_u(t)^2,\quad\mbox{a.e. }t>0.
\]
If $R_u(t)>0$, then we can divide the above inequality by $2R_u(t)$. If $R_u(t)=0$, then $R_u$ attains a global minimum at $t$, so $\frac{d}{dt}R_u(t)=0$.
Hence we have the following differential inequality:
\[
\frac{d}{dt}R_u(t)\leq\kappa_2\beta^{in}_U\D({\mathcal B}(t))R_u(t),\quad\mbox{a.e. }t>0.
\]
Now we apply Gronwall's inequality and Proposition \ref{P3.1} to obtain
\begin{align*}
\begin{aligned}
R_u(t)&\leq R_u(0)\exp\left(\kappa_2\beta_U^{in}\int_0^t\D({\mathcal B}(s))ds\right)\\
&\leq R_u(0)\exp\left(\kappa_2\beta_U^{in}\D({\mathcal B}(0))\int_0^\infty\left(1-C_1\zeta(x^\infty)^{\gamma_g}\right)^{\lfloor \frac{s}{\delta}\rfloor}ds\right)\\
&= R_u(0)\exp\left(\kappa_2\beta_U^{in}\D({\mathcal B}(0))\sum_{n=0}^\infty \int_{n\delta}^{(n+1)\delta} \left(1-C_1\zeta(x^{\infty})^{\gamma_g}\right)^{\lfloor \frac{s}{\delta}\rfloor}ds\right)\\
&= R_u(0)\exp\left(\kappa_2 \beta_U^{in}\D({\mathcal B}(0))\sum_{n=0}^\infty \delta \left(1-C_1\zeta(x^{\infty})^{\gamma_g}\right)^n\right)\\
&= R_u(0)\exp\left(\frac{\kappa_2 \delta\beta_U^{in}\D({\mathcal B}(0))}{C_1\zeta(x^{\infty})^{\gamma_g}}\right).
\end{aligned}
\end{align*}
\end{proof}
Next, we derive a differential inequality for the velocity diameter.
\begin{lemma}[Differential inequality of $\D(V(t))$] \label{L3.4}
Let $\{ (x_i,v_i,\beta_i)\}$ be a solution to \eqref{B-10} satisfying a priori condition \eqref{C-10}. Then, for any given $\delta>0$ we have
\[
\frac{d}{dt}\D(V(t))\leq2\kappa_1R_V^c\D({\mathcal B}(t)),\quad \mbox{a.e. }t>0,
\]
where $R_V^c=R_V^c(x^{\infty},\delta)$ is the constant defined in Lemma \ref{L3.3}.
\end{lemma}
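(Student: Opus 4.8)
The plan is to follow $\D(V(t))$ through its instantaneous maximizing pair and to exploit the alignment structure of the velocity equation written in the coldness-weighted variable $u_i=\beta_iv_i$. Since $t\mapsto \D(V(t))$ is a finite maximum of smooth functions it is Lipschitz, hence differentiable a.e., and at a.e. $t>0$ I would fix extremal indices $i_t,k_t$ with $\D(V(t))=\|v_{i_t}(t)-v_{k_t}(t)\|$ and set $w:=v_{i_t}-v_{k_t}$, so that
\[
\D(V(t))\frac{d}{dt}\D(V(t))=\Big\langle w,\frac{dv_{i_t}}{dt}-\frac{dv_{k_t}}{dt}\Big\rangle.
\]
Recalling $\frac{dv_i}{dt}=\frac1N\sum_j\chi_{ij}\phi(\|x_i-x_j\|)(u_j-u_i)$, the crucial device is the splitting
\[
u_j-u_i=\beta_jv_j-\beta_iv_i=\beta_i(v_j-v_i)+(\beta_j-\beta_i)v_j,
\]
which isolates a genuine velocity-alignment contribution from a temperature-driven forcing term.

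For the alignment contribution $\frac1N\sum_j\chi_{ij}\phi(\|x_i-x_j\|)\beta_i(v_j-v_i)$, I would show it is dissipative once paired with $w$. The key input is the extremality of $(i_t,k_t)$: since $\|v_j-v_{k_t}\|\le\|w\|$ and $\|v_j-v_{i_t}\|\le\|w\|$ for every $j$, expanding the corresponding squared norms yields
\[
\langle w,v_j-v_{i_t}\rangle\le-\tfrac12\|v_j-v_{i_t}\|^2\le0,\qquad \langle w,v_j-v_{k_t}\rangle\ge\tfrac12\|v_j-v_{k_t}\|^2\ge0.
\]
Because $\chi_{ij}$, $\phi$, and $\beta_i$ are all nonnegative, the $i_t$-sum pairs nonnegative weights with nonpositive inner products, while the $k_t$-sum (carrying the overall minus sign from $\frac{dv_{i_t}}{dt}-\frac{dv_{k_t}}{dt}$) pairs nonnegative weights with nonnegative inner products; both are $\le0$, so the entire alignment part of the inner product is nonpositive and may be discarded. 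I would emphasize here that this sign argument needs only nonnegativity of the weights, not the symmetry that is unavailable on a digraph.

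It then remains to control the forcing part $\frac1N\sum_j\chi_{ij}\phi(\|x_i-x_j\|)(\beta_j-\beta_i)v_j$. Pairing with $w$ and applying Cauchy--Schwarz together with $\phi\le\kappa_1$, the coldness bound $|\beta_j-\beta_i|\le\D({\mathcal B})$, the velocity bound $\|v_j\|\le R_V^c$ from Lemma \ref{L3.3}, and $\frac1N\sum_j\chi_{ij}\le1$, each of the two forcing sums is bounded by $\|w\|\,\kappa_1 R_V^c\D({\mathcal B})$. Collecting everything gives
\[
\D(V(t))\frac{d}{dt}\D(V(t))\le 2\kappa_1 R_V^c\D({\mathcal B}(t))\,\D(V(t)),
\]
and dividing by $\D(V(t))$ wherever it is positive produces the stated inequality; at instants where $\D(V(t))=0$ the diameter attains a global minimum, so its derivative vanishes and the bound holds trivially. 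The main obstacle is not computational but structural: arranging the decomposition and the two families of inner products so that the alignment term is nonpositive term-by-term. This is precisely what lets the estimate close on a general digraph, replacing the role that momentum conservation or weight symmetry plays in the complete-graph analysis.
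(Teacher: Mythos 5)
Your proposal is correct and follows essentially the same route as the paper: the same decomposition $\beta_jv_j-\beta_iv_i=\beta_i(v_j-v_i)+(\beta_j-\beta_i)v_j$ at the maximizing pair, the same polarization-identity sign argument to discard the alignment part, and the same $\kappa_1\,\D(\mathcal B)\,R_V^c$ bound on the forcing part, with the identical treatment of the case $\D(V(t))=0$.
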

\begin{proof}
For a given $t$, let $i$ and $j$ be indices satisfying the relation:
\[ \D(V)=\|v_i-v_j\|. \]
Then, we have
\begin{align}
\begin{aligned}\label{C-13-1}
\frac{1}{2}\frac{d}{dt}\|v_i-v_j\|^2 &=\left\langle v_i-v_j,\frac{dv_i}{dt}-\frac{dv_j}{dt}\right\rangle\\
&=\left\langle v_i-v_j,\frac{1}{N}\sum_{k=1}^N \chi_{ik}\phi_{ik}(\beta_kv_k-\beta_iv_i)\right\rangle \\
&+\left\langle v_j-v_i,\frac{1}{N}\sum_{k=1}^N \chi_{jk}\phi_{jk}(\beta_kv_k-\beta_jv_j)\right\rangle\\
& =:\mathcal{I}_{11}+\mathcal{I}_{12},
\end{aligned}
\end{align}
where we wrote $\phi_{ij}:=\phi(\|x_i-x_j\|)$, $i,j=1,2,\cdots, N$ for notational convenience. \newline

Below, we estimate the terms ${\mathcal I}_{1i},~i=1,2$ one by one. \newline

\noindent $\bullet$ (Estimate of $\mathcal{I}_{11}$): We use $ \phi_{ik} \leq \phi(0) = \kappa_1$ to find
\begin{align*}
\begin{aligned}
\mathcal{I}_{11} &= \frac{1}{N}\sum_{k=1}^N\chi_{ik} \phi_{ik}\left\langle v_i-v_j,\beta_kv_k-\beta_iv_k\right\rangle + \frac{1}{N}\sum_{k=1}^N\chi_{ik} \phi_{ik}\left\langle v_i-v_j,\beta_iv_k-\beta_iv_i\right\rangle \\
&\leq \frac{1}{N}\sum_{k=1}^N\chi_{ik} \phi_{ik}\left\langle v_i-v_j, (\beta_k - \beta_i) v_k \right\rangle + 0 \\
& \leq \kappa_1\D({\mathcal B})\| v_i-v_j\| \Big( \frac{1}{N}\sum_{k=1}^N \|v_k\| \Big).
\end{aligned}
\end{align*}
The first inequality followed from
\begin{align*}
\begin{aligned}
\langle v_k-v_i,v_i-v_j\rangle&=\frac{\|v_k-v_j\|^2-\|v_k-v_i\|^2-\|v_i-v_j\|^2}{2}\\
&\leq \frac{\|v_i-v_j\|^2-0-\|v_i-v_j\|^2}{2}=0.
\end{aligned}
\end{align*}
\vspace{0.2cm}

\noindent $\bullet$ (Estimate of $\mathcal{I}_{12}$) : 
\begin{align*}
\begin{aligned}
\mathcal{I}_{12} &= \frac{1}{N}\sum_{k=1}^N\chi_{jk} \phi_{jk}\left\langle v_j-v_i,\beta_kv_k-\beta_jv_k\right\rangle + \frac{1}{N}\sum_{k=1}^N\chi_{jk} \phi_{jk}\left\langle v_j-v_i,\beta_jv_k-\beta_jv_j\right\rangle \\
&\leq \frac{1}{N}\sum_{k=1}^N\chi_{jk} \phi_{jk}\left\langle v_j-v_i, (\beta_k - \beta_j) v_k \right\rangle + 0 \\
&\leq \kappa_1\D({\mathcal B})\| v_i-v_j\| \Big( \frac{1}{N}\sum_{k=1}^N \|v_k\| \Big).
\end{aligned}
\end{align*}
The first inequality followed from
\begin{align*}
\begin{aligned}
\langle v_k-v_j,v_j-v_i\rangle&=\frac{\|v_k-v_i\|^2-\|v_k-v_j\|^2-\|v_j-v_i\|^2}{2}\\
&\leq \frac{\|v_j-v_i\|^2-0-\|v_j-v_i\|^2}{2}=0.
\end{aligned}
\end{align*}

Now, we combine estimates for $\mathcal{I}_{11}$ and $\mathcal{I}_{12}$ in \eqref{C-13-1} and Lemma \ref{L3.3} to obtain
\[
\mathcal{I}_{11}+\mathcal{I}_{12}\leq 2\kappa_1R_V^c\D({\mathcal B})\|v_i-v_j\|.
\]
Since $\D(V)=\|v_i-v_j\|$, we have
\[
\D(V(t))\frac{d}{dt}\D(V(t))\leq2\kappa_1R_V^c\D({\mathcal B})\D(V(t)),\quad\mbox{a.e. }t>0.
\]
If $\D(V(t))>0$, then we can divide the above inequality by $\D(V(t))$. If $\D(V(t))=0$, then $\D(V)$ attains a global minimum at $t$, so $\frac{d}{dt}\D(V(t))=0$.
Hence we have the following differential inequality:
\[
\frac{d}{dt}\D(V(t))\leq2\kappa_1R_V^c\D({\mathcal B}(t)),\quad \mbox{a.e. }t>0.
\]
\end{proof}
Similar to the previous subsection, we first rearrange the terms in  $\eqref{B-10}_2$ as follows.
\begin{align}
\begin{aligned} \label{C-13-2}
\frac{dv_i}{dt} &= -\frac{\beta_i}{N} \Big[  \Big( \sum_{j=1}^N \chi_{ij}\phi(\|x_i-x_j\|)  \Big) v_i  - \sum_{j=1}^N \chi_{ij}\phi(\|x_i-x_j\|) v_j  \Big] \\
  &+  \frac{1}{N} \sum_{j=1}^N \chi_{ij}\phi(\|x_i-x_j\|) (\beta_j - \beta_i) v_j.
\end{aligned}
\end{align}
In order to express \eqref{C-13-2} in matrix form, we define an $N\times N$ matrix $\tilde {L}(t)$ by
\[
\tilde L(t):=\tilde D(t)-\tilde A(t),
\]
where the matrices $\tilde A(t)=(\tilde a_{ij}(t))$ and $\tilde D(t)=\mbox{diag}(\tilde d_1(t),\cdots,\tilde d_N(t))$ are defined by the following relations:
\[
\tilde a_{ij}(t):=\chi_{ij}\phi(\|x_i(t)-x_j(t)\|)\quad\mbox{and}\quad \tilde d_i(t)=\sum_{j=1}^N \chi_{ij}\phi(\|x_i(t)-x_j(t)\|).
\]
On the other hand, recall that
\[
 \Gamma(t):=\mbox{diag}\big(\beta_1(t),\cdots,\beta_N(t)\big).
\]
Thus, $\eqref{C-13-2}$ can be rewritten as
\begin{equation}\label{C-14}
\frac{d}{dt}V(t)=-\frac{1}{N}\Gamma(t)\tilde L(t)V(t)+\frac{1}{N}\Lambda(t),
\end{equation}
where the $N\times d$ matrix $\Lambda(t):=(\lambda_i^k(t))_{1\leq i\leq N,1\leq k\leq d}$ is defined by
%\[
%\Lambda^k(t):=[\lambda_1^k(t),\cdots,\lambda_N^k(t)]^\top,\quad k=1,\cdots, d,
%\]
%with
\[
\lambda_i^k(t):=\sum_{j=1}^N\chi_{ij}\phi(\|x_i(t)-x_j(t)\|)(\beta_j(t)-\beta_i(t))v_j^k(t),~~ i=1,\cdots,N,~k=1,\cdots,d,~~ t\geq0.
\]
We also define the $N\times N$ matrix $B(t)=(b_{ij}(t))$ by
\[
b_{ij}(t)=\chi_{ij}\phi(\|x_i(t)-x_j(t)\|)(\beta_j(t)-\beta_i(t)),\quad i,j=1,\cdots,N.
\]
Then we have 
\begin{equation} \label{C-14-0} 
\Lambda(t)=B(t)V(t).
\end{equation}

Next, we perform the same analysis as in Lemma \ref{L3.2}. Let ${\tilde \Phi}(t_2,t_1)$ be the state transition matrix associated with the homogeneous part of  \eqref{C-14}. Then, for any given $\delta > 0$, we derive the solution formula for $V$:
\begin{equation} \label{C-14-1}
V\big(m \delta\big)=\tilde\Phi\big(m\delta,(m-1)\delta\big)V((m-1)\delta)+ \frac{1}{N} \int_{(m-1)\delta}^{m\delta}\tilde\Phi(m\delta,s) \Lambda(s)ds,\quad m \in \bbn.
\end{equation}
In next lemma, we study properties of the matrix $\tilde\Phi\big(m\delta,(m-1)\delta\big)$. 
\begin{lemma} \label{L3.5}
Let $\{ (x_i,v_i,\beta_i)\}$ be a solution to \eqref{B-10} satisfying a priori condition \eqref{C-10}. Then the following assertions hold.
\begin{enumerate}
\item
The ergodicity coefficient $\mu \Big({\tilde \Phi}\big(m\delta,(m-1)\delta\big)\Big)$ satisfies
\[ \mu \Big({\tilde \Phi}\big(m\delta,(m-1)\delta\big)\Big) \geq   C_2 \phi(x^{\infty})^{\gamma_g},   \]
where
\[
C_2=C_2(\delta):=e^{-\kappa_1\beta_U^{in} \delta}\cdot\frac{1}{\gamma_g!}\Big(\delta\frac{\beta_L^{in}}{N}\Big)^{\gamma_g}.
\]

\item
The state transition matrix ${\tilde \Phi}\big(m\delta,(m-1)\delta\big)$ is stochastic.
\end{enumerate}
\end{lemma}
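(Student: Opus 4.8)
The plan is to mirror the proof of Lemma \ref{L3.2} almost verbatim, with the coldness data $(\Gamma^2,\zeta,\kappa_2,(\beta_L^{in})^2,(\beta_U^{in})^2)$ replaced by the velocity data $(\Gamma,\phi,\kappa_1,\beta_L^{in},\beta_U^{in})$. First I would isolate the coefficient matrix $-\frac{1}{N}\Gamma(t)\tilde L(t)$ of the homogeneous part of \eqref{C-14} and rewrite it as $\frac{1}{N}\Gamma(t)(\tilde A(t)-\tilde D(t))$. Using $\beta_i(t)\geq\beta_L^{in}$, the monotonicity of $\phi$ together with the a priori bound $\|x_i-x_j\|\leq\D(X)\leq x^\infty$, and the elementary out-degree bound $\tilde d_i=\sum_j\chi_{ij}\phi\leq N\kappa_1$, I would establish the entrywise estimate
\[
-\frac{1}{N}\Gamma(t)\tilde L(t)\geq\frac{\beta_L^{in}}{N}\tilde A^\infty-\kappa_1\beta_U^{in}I,\qquad \tilde a_{ij}^\infty:=\chi_{ij}\phi(x^\infty),
\]
where I note that the factor $1/N$ cancels the (at most) $N$ terms in the diagonal part, so the shift constant is exactly $\kappa_1\beta_U^{in}$ with no extra $1/N$, while the $1/N$ is retained in front of the off-diagonal piece $\tilde A^\infty$.

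Second, I would split $-\frac{1}{N}\Gamma\tilde L=\big(-\frac{1}{N}\Gamma\tilde L+\kappa_1\beta_U^{in}I\big)-\kappa_1\beta_U^{in}I$, where the bracket dominates $\frac{\beta_L^{in}}{N}\tilde A^\infty\geq0$. Lemma \ref{L2.3} then relates the transition matrix $\tilde\Phi$ of $-\frac{1}{N}\Gamma\tilde L$ to that of the shifted matrix through the factor $e^{-\kappa_1\beta_U^{in}(t_2-t_1)}$. Since the shifted coefficient matrix dominates the fixed nonnegative matrix $\frac{\beta_L^{in}}{N}\tilde A^\infty$ and products of nonnegative matrices are order-preserving, every term of the Peano--Baker series is bounded below by the corresponding term of $\exp\!\big((t_2-t_1)\frac{\beta_L^{in}}{N}\tilde A^\infty\big)$. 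Evaluating on $[(m-1)\delta,m\delta]$ and keeping only the $\gamma_g$-th term of this matrix exponential yields
\[
\tilde\Phi\big(m\delta,(m-1)\delta\big)\geq C_2(\tilde A^\infty)^{\gamma_g}\geq0.
\]
Because $\G(\tilde A^\infty)$ shares the adjacency pattern of $(\chi_{ij})$ (hence a spanning tree of smallest depth $\gamma_g$) and its smallest positive entry is $\phi(x^\infty)$, the monotonicity \eqref{B-1-1} followed by Lemma \ref{L2.1} gives $\mu\big(\tilde\Phi(m\delta,(m-1)\delta)\big)\geq C_2\mu\big((\tilde A^\infty)^{\gamma_g}\big)\geq C_2\phi(x^\infty)^{\gamma_g}$, which is assertion (1).

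For assertion (2) nonnegativity is already in hand from the displayed lower bound, so only the row sums remain. The key observation is that $\tilde L[1,\cdots,1]^\top=0$, which is immediate from $\tilde d_i=\sum_j\chi_{ij}\phi(\|x_i-x_j\|)$; hence the constant state is a time-independent solution of the homogeneous equation, and the solution formula \eqref{C-14-1} with $\Lambda\equiv0$ forces $[1,\cdots,1]^\top=\tilde\Phi(m\delta,(m-1)\delta)[1,\cdots,1]^\top$, i.e. stochasticity. I expect no genuinely new obstacle beyond Lemma \ref{L3.2}; the only point demanding care is the correct bookkeeping of the $1/N$ factor and of the power of $\Gamma$ (first power here versus second power for coldness), so that the shift constant and the prefactor $C_2$ come out exactly as stated.
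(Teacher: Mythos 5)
Your proposal is correct and follows essentially the same route as the paper's proof: the same entrywise lower bound $-\frac{1}{N}\Gamma\tilde L\geq\frac{\beta_L^{in}}{N}\tilde A^{\infty}-\kappa_1\beta_U^{in}I$, the same shift-and-exponential-factor decomposition via Lemma \ref{L2.3}, the same termwise Peano--Baker lower bound retaining the $\gamma_g$-th power, and the same constant-state argument for stochasticity. The bookkeeping of the $1/N$ factor and the single power of $\Gamma$ is exactly as the paper does it.
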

\begin{proof} 
\noindent (1)~Note that
\begin{equation}\label{C-15}
-\frac{1}{N}\Gamma(t)\tilde L(t)=\frac{1}{N}\Gamma(t)(\tilde A(t)-\tilde D(t))\geq\frac{\beta_L^{in}}{N} {\tilde A}^{\infty}-\kappa_1\beta_U^{in}I,
\end{equation}
where ${\tilde A}^{\infty} =({\tilde a}^{\infty}_{ij})$ is a nonnegative matrix whose entries are defined by
\[
{\tilde a}_{ij}^{\infty}:=\chi_{ij}\phi(x^{\infty}).
\]
Motivated by \eqref{C-15}, we again decompose the matrix $-\frac{1}{N}\Gamma(t)\tilde L(t)$ into a sum of two matrices:
\[
-\frac{1}{N}\Gamma(t)\tilde L(t)=\left(-\frac{1}{N}\Gamma(t)\tilde L(t)+\kappa_1\beta_U^{in}I\right)-\kappa_1\beta_U^{in}I.
\]
Again, we use \eqref{C-15} to obtain
\begin{equation}\label{C-16}
-\frac{1}{N}\Gamma(t)\tilde L(t)+\kappa_1 \beta_U^{in}I\geq \frac{\beta_L^{in}}{N} {\tilde A}^{\infty}\geq0.
\end{equation}
For any $0\leq t_1<t_2<\infty$, let $\tilde \Phi(t_2,t_1)$ and $\tilde \Psi(t_2,t_1)$ be the state-transition matrices of $-\frac{1}{N}\Gamma(t)\tilde L(t)$ and $-\frac{1}{N}\Gamma(t)\tilde L(t)+\kappa_1 \beta_U^{in}I$ on $[t_1,t_2]$, respectively. Then, it follows from Lemma \ref{L2.3} that we have 
\begin{equation} \label{C-17}
\tilde \Phi\left(t_2,t_1\right)=e^{-\kappa_1\beta_U^{in}(t_2-t_1)}\tilde \Psi\left(t_2,t_1\right).
\end{equation}
And from \eqref{C-16}, we can use the Peano-Baker series and obtain the following:
\begin{align} \label{C-18}
\begin{aligned}
\tilde \Psi\left(t_2,t_1\right)&=I+\sum_{n=1}^\infty \int_{t_1}^{t_2}\int_{t_1}^{\tau_1}\cdots\int_{t_1}^{\tau_{n-1}}\Big((-\frac{1}{N}\Gamma(\tau_1)\tilde L(\tau_1)+\kappa_1\beta_U^{in}I)\cdots \\
&\hspace{2cm}(-\frac{1}{N}\Gamma(\tau_n)\tilde L(\tau_n)+\kappa_1\beta_U^{in}I) \Big)d\tau_n\cdots d\tau_1\\
&\geq I+\sum_{n=1}^\infty \int_{t_1}^{t_2}\int_{t_1}^{\tau_1}\cdots\int_{t_1}^{\tau_{n-1}}\Big(\frac{\beta_L^{in}}{N}{\tilde A}^{\infty} \Big)^n d\tau_n\cdots d\tau_1\\
&= I+\sum_{n=1}^\infty\frac{1}{n!} (t_2-t_1)^n\Big(\frac{\beta_L^{in}}{N} {\tilde A}^{\infty} \Big)^n = \exp\left((t_2-t_1)\frac{\beta_L^{in}}{N} {\tilde A}^{\infty}\right).
\end{aligned}
\end{align}
Now, we fix $m\in\bbn$, and combine \eqref{C-17} and \eqref{C-18}, and put $t_1=(m-1)\delta$, $t_2=m\delta$ to obtain 
\begin{align}\label{C-18-1}
\begin{aligned}
\tilde \Phi\Big(m \delta,(m-1)\delta\Big)&\geq e^{-\kappa_1\beta_U^{in} \delta} \exp\Big[ \delta\frac{\beta_L^{in}}{N} {\tilde A}^{\infty} \Big] =e^{-\kappa_1 \beta_U^{in} \delta}\Big[I+\sum_{n=1}^\infty\frac{1}{n!}\Big( \delta\frac{\beta_L^{in}}{N}{\tilde A}^{\infty} \Big)^n\Big]\\
&\geq e^{-\kappa_1 \beta_U^{in}\delta}\cdot\frac{1}{\gamma_g!}\Big(\delta\frac{\beta_L^{in}}{N}\Big)^{\gamma_g} ({\tilde A}^\infty)^{\gamma_g}= C_2 ({\tilde A}^\infty)^{\gamma_g} \geq0.
\end{aligned}
\end{align}
Hence, we use \eqref{B-1-1} and Lemma \ref{L2.1} to get
\begin{equation}\label{C-19}
\mu\Big(\tilde \Phi\Big(m \delta,(m-1)\delta\Big)\Big)\geq C_2\mu(({\tilde A}^\infty)^{\gamma_g})\geq C_2 \phi(x^{\infty})^{\gamma_g}.
\end{equation}
\newline
(2)~Note that by \eqref{C-18-1} ${\tilde \Phi}\Big(m\delta,(m-1)\delta\Big)$ is nonnegative and the constant state $\xi(t):=[\xi_1(t),\cdots,\xi_N(t)]^\top\equiv [1,\cdots,1]^\top$ is a solution to the corresponding homogeneous part of \eqref{C-14}:
\[ [1,\cdots,1]^\top=\tilde\Phi(m \delta,(m-1)\delta)[1,\cdots,1]^\top, \]
which implies that ${\tilde \Phi} \Big(m\delta,(m-1)\delta\Big)$ is stochastic. 
\end{proof}
Next, we are ready to provide the exponential decay estimate of $\D(V)$ in the following proposition.
\begin{proposition} [Exponential decay of $\D(V(t))$] \label{P3.2}
Let $\{ (x_i,v_i,\beta_i)\}$ be a solution to \eqref{B-10} satisfying a priori condition \eqref{C-10}. Then, we have the exponential decay of $\D(V(t))$: For any given $\delta>0$, we have 
\begin{align*}
\begin{aligned}
\D(V(t))&\leq   \left(1-C_2 \phi(x^{\infty})^{\gamma_g}\right)^{\lfloor \frac{t}{\delta}\rfloor}\D(V(0))+2\delta\kappa_1R_V^c\D({\mathcal B}(0))\left(1-C_1\zeta(x^{\infty})^{\gamma_g}\right)^{\lfloor\frac{t}{\delta}\rfloor}\\
&+\sqrt{2}N\kappa_1R_V^c\D({\mathcal B}(0))\delta\left\lfloor \frac{t}{\delta}\right\rfloor\Big[\max\{1-C_1\zeta(x^{\infty})^{\gamma_g},1-C_2 \phi(x^{\infty})^{\gamma_g}\}\Big]^{\lfloor \frac{t}{\delta}\rfloor-1},
\end{aligned}
\end{align*}
where $C_1=C_1(\delta)$ and $C_2=C_2(\delta)$ are the constants defined in Lemmas \ref{L3.2} and \ref{L3.5}, respectively.
\end{proposition}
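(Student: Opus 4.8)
The plan is to run a discrete Gronwall-type argument (contraction plus forcing) on the sampled sequence $\{\D(V(m\delta))\}_{m\in\bbn}$ and then interpolate to arbitrary $t$. The solution representation \eqref{C-14-1} already writes $V(m\delta)$ as an affine image of $V((m-1)\delta)$ under the matrix $\tilde\Phi(m\delta,(m-1)\delta)$, plus the inhomogeneous term $B_m:=\frac1N\int_{(m-1)\delta}^{m\delta}\tilde\Phi(m\delta,s)\Lambda(s)\,ds$, which is exactly the format $W=AZ+B$ of Lemma \ref{L2.2}. First I would apply Lemma \ref{L2.2} with $A=\tilde\Phi(m\delta,(m-1)\delta)$ (stochastic by Lemma \ref{L3.5}) and perturbation $B_m$; using $\mu(\tilde\Phi(m\delta,(m-1)\delta))\geq C_2\phi(x^{\infty})^{\gamma_g}$ from Lemma \ref{L3.5}, this yields the one-step recursion
\[
\D(V(m\delta))\leq\big(1-C_2\phi(x^{\infty})^{\gamma_g}\big)\D(V((m-1)\delta))+\sqrt2\,\|B_m\|_F.
\]

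Next I would estimate the forcing $\|B_m\|_F$. Since $\Lambda=BV$ in \eqref{C-14-0}, with every entry of $B(t)$ bounded by $\kappa_1\D({\mathcal B}(t))$ and $\|v_i(t)\|\leq R_V^c$ by Lemma \ref{L3.3}, and since $\tilde\Phi(m\delta,s)$ is stochastic for each $s$ (so each row of $\tilde\Phi(m\delta,s)\Lambda(s)$ is a convex combination of rows of $\Lambda(s)$), the integrand is controlled by a multiple of $N\kappa_1 R_V^c\D({\mathcal B}(s))$. Inserting the temperature decay $\D({\mathcal B}(s))\leq a^{\lfloor s/\delta\rfloor}\D({\mathcal B}(0))$ from Proposition \ref{P3.1}, with $a:=1-C_1\zeta(x^{\infty})^{\gamma_g}$, and integrating over one step yields a bound of the form $\|B_m\|_F\lesssim N\kappa_1 R_V^c\,\delta\,a^{m-1}\D({\mathcal B}(0))$.

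I would then unroll the recursion to obtain $\D(V(m\delta))\leq b^m\D(V(0))+\sqrt2\sum_{k=1}^m b^{m-k}\|B_k\|_F$ with $b:=1-C_2\phi(x^{\infty})^{\gamma_g}$, and substitute the forcing bound. The decisive step is the mixed geometric convolution $\sum_{k=1}^m b^{m-k}a^{k-1}$, which I would estimate crudely but robustly by $m\max\{a,b\}^{m-1}$; this is exactly what produces the third term of the claim, including the prefactor $\lfloor t/\delta\rfloor$. To pass from the sampling times $m\delta$ to a general $t$, I would write $t=\lfloor t/\delta\rfloor\delta+\tau$ with $0\leq\tau<\delta$ and apply Lemma \ref{L2.2} once more on the last interval $[\lfloor t/\delta\rfloor\delta,t]$; here the state-transition matrix is stochastic but need not be scrambling, so I would keep only $1-\mu\leq1$ together with the residual forcing, which generates the second term.

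The hard part will be the forcing analysis rather than the contraction: the velocity dynamics is genuinely inhomogeneous because the temperature mismatch $\beta_j-\beta_i$ injects momentum through $\Lambda$, and without a conservation law the asymptotic velocity is not known a priori. One must therefore quantify $\|B_m\|_F$ using the already-established velocity bound $R_V^c$ (Lemma \ref{L3.3}) together with the temperature decay (Proposition \ref{P3.1}), and then control the two-rate convolution $\sum_{k=1}^m b^{m-k}a^{k-1}$ uniformly in $m$. I expect the choice of the $\max\{a,b\}^{m-1}$ bound to be the key robustness point, since the sharper closed form $\frac{a^m-b^m}{a-b}$ degenerates when $a=b$; the incomplete final interval, where no scrambling is available, is the only remaining subtlety.
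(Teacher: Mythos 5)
Your proposal follows the paper's proof in all of its essential steps: the one-step recursion from \eqref{C-14-1} via Lemma \ref{L2.2} and the scrambling bound of Lemma \ref{L3.5}, the forcing estimate $\|\Lambda\|_F\lesssim N\sqrt{N}\kappa_1 R_V^c\D({\mathcal B}(0))\,a^{m-1}$ obtained from $\Lambda=BV$, Lemma \ref{L3.3} and Proposition \ref{P3.1}, and the unrolling of the two-rate convolution with the bound $\sum_{k=1}^{m}b^{m-k}a^{k-1}\leq m\max\{a,b\}^{m-1}$, which is exactly how the paper produces the third term. The only place you deviate is the final partial interval $[\delta\lfloor t/\delta\rfloor,t]$: the paper invokes the separate differential inequality of Lemma \ref{L3.4}, $\frac{d}{dt}\D(V)\leq 2\kappa_1R_V^c\D({\mathcal B})$, which integrates to the increment $2\delta\kappa_1R_V^c\D({\mathcal B}(0))\,a^{\lfloor t/\delta\rfloor}$ appearing as the second term, whereas you reuse Lemma \ref{L2.2} with $\mu\geq0$. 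Your variant is sound in principle, but the perturbation in Lemma \ref{L2.2} enters as $\sqrt{2}\|B\|_F$, and tracking the constants gives a coefficient of order $\sqrt{2}N\delta\kappa_1R_V^c\D({\mathcal B}(0))$ (or $\sqrt{2N}$ with the convex-combination refinement you mention) rather than $2\delta\kappa_1R_V^c\D({\mathcal B}(0))$; for $N\geq 2$ this is weaker than the stated second term. Since these constants feed directly into the smallness condition \eqref{C-30} of Theorem \ref{T3.1}, you should either switch to the diameter differential inequality of Lemma \ref{L3.4} on the tail interval, or replace the $\sqrt{2}\|B\|_F$ bound by the sharper $\max_{i,k}\|b_i-b_k\|\leq 2\max_i\|b_i\|$ available from the proof of Lemma \ref{L2.2}, either of which recovers the constant $2$.
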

\begin{proof}
We combine \eqref{C-14-1}, Lemma \ref{L2.2} and \eqref{C-19} to obtain
\begin{align}\label{C-20}
\begin{aligned}
\D(V(m\delta))&\leq\left(1-\mu\big(\tilde\Phi(m\delta,(m-1)\delta)\big)\right)\D(V((m-1)\delta))+\frac{\sqrt{2}}{N}\left\|\int_{(m-1)\delta}^{m\delta}\tilde\Phi(m\delta,s)\Lambda(s)ds\right\|_F\\
&\leq\left(1-C_2 \phi(x^{\infty})^{\gamma_g}\right)\D(V((m-1)\delta))+\frac{\sqrt{2}}{N}\int_{(m-1)\delta}^{m\delta}\|\tilde\Phi(m\delta,s)\|_F\|\Lambda(s)\|_Fds\\
&\leq\left(1-C_2 \phi(x^{\infty})^{\gamma_g}\right)\D(V((m-1)\delta))+\frac{\sqrt{2}}{N}\int_{(m-1)\delta}^{m\delta}\sqrt{N}\|\Lambda(s)\|_Fds.
\end{aligned}
\end{align}
The last inequality followed from the fact that $\tilde\Phi(m\delta,s)$ is a stochastic matrix with $N$ rows. We also use Proposition  \ref{P3.1}, Lemma \ref{L3.3} and \eqref{C-14-0} to get
\begin{align}\label{C-21}
\begin{aligned}
\|\Lambda(s)\|_F&\leq \|B(s)\|_F\|V(s)\|_F\leq N\kappa_1\D({\mathcal B}(s))\cdot \sqrt{N}\max_{1\leq i\leq N}\|v_i(s)\|\\
&\leq N\kappa_1\left(1-C_1\zeta(x^{\infty})^{\gamma_g}\right)^{\lfloor \frac{s}{\delta}\rfloor}\D({\mathcal B}(0))\cdot \sqrt{N}R_V^c\\
&\leq N\sqrt{N}\kappa_1R_V^c\D({\mathcal B}(0))\left(1-C_1\zeta(x^{\infty})^{\gamma_g}\right)^{ m-1},\quad s\in[(m-1)\delta,m\delta].
\end{aligned}
\end{align}
Finally, we combine \eqref{C-20} and \eqref{C-21} and get the following relation: for $ m\in\bbn$,
\begin{align}\label{C-22}
\begin{aligned}
&\D(V(m \delta)) \\
& \hspace{0.2cm} \leq \left(1-C_2 \phi(x^{\infty})^{\gamma_g}\right)\D(V((m-1)\delta)) +\sqrt{2}N\kappa_1R_V^c\D({\mathcal B}(0))\delta \left(1-C_1\zeta(x^{\infty})^{\gamma_g}\right)^{ m-1}\\
& \hspace{0.2cm} =:\left(1-C_2 \phi(x^{\infty})^{\gamma_g}\right)\D(V((m-1)\delta))+C_3 \left(1-C_1\zeta(x^{\infty})^{\gamma_g}\right)^{ m-1}.
\end{aligned}
\end{align}
We divide both sides of \eqref{C-22} by $\left(1-C_2 \phi(x^{\infty})^{\gamma_g}\right)^m$ to obtain 
\[
\frac{\D(V(m \delta))}{\left(1-C_2 \phi(x^{\infty})^{\gamma_g}\right)^m}\leq \frac{\D(V((m-1)\delta))}{\left(1-C_2 \phi(x^{\infty})^{\gamma_g}\right)^{m-1}}+\frac{C_3}{1-C_2 \phi(x^{\infty})^{\gamma_g}} \Bigg[\frac{1-C_1\zeta(x^{\infty})^{\gamma_g}}{1-C_2 \phi(x^{\infty})^{\gamma_g}}\Bigg]^{m-1},\quad m\in\bbn.
\]
This and inductive arguments yield
\[
\frac{\D(V(m \delta))}{\left(1-C_2 \phi(x^{\infty})^{\gamma_g}\right)^m}\leq\D(V(0))+\frac{C_3}{1-C_2 \phi(x^{\infty})^{\gamma_g}}\sum_{n=0}^{m-1}\Bigg[\frac{1-C_1\zeta(x^{\infty})^{\gamma_g}}{1-C_2 \phi(x^{\infty})^{\gamma_g}}\Bigg]^n,\quad m\in\bbn.
\]
Thus for any $m\in\bbn$ we have
\begin{align}\label{C-23}
\begin{aligned}
\D(V(m \delta)) &\leq \left(1-C_2 \phi(x^{\infty})^{\gamma_g}\right)^m\D(V(0)) \\
&+C_3\sum_{n=0}^{m-1}[1-C_1\zeta(x^{\infty})^{\gamma_g}]^n\left[1-C_2 \phi(x^{\infty})^{\gamma_g}\right]^{m-n-1}\\
&\leq \left(1-C_2 \phi(x^{\infty})^{\gamma_g}\right)^m\D(V(0)) \\
&+C_3m\Big[\max\{1-C_1\zeta(x^{\infty})^{\gamma_g},1-C_2 \phi(x^{\infty})^{\gamma_g}\}\Big]^{m-1}.
\end{aligned}
\end{align}
For any real $t=\delta p\geq0$, we combine Lemma \ref{L3.4}, Proposition \ref{P3.1}, and \eqref{C-23} to get
\begin{align*}
\begin{aligned}
&\D(V(t))=\D\left(V(\delta p)\right)\leq \D\left(V(\delta\lfloor p\rfloor)\right)+\int_{\delta\lfloor p\rfloor}^{\delta p}2\kappa_1R_V^c\D(\mathcal B(s))ds\\
&\hspace{0.5cm}\leq \D\left(V(\delta\lfloor p\rfloor)\right)+\int_{\delta\lfloor p\rfloor}^{\delta p}2\kappa_1R_V^c\left(1-C_1\zeta(x^\infty)^{\gamma_g}\right)^{\lfloor\frac{ s}{\delta}\rfloor}\D(\mathcal B(0))ds \\
&\hspace{0.5cm}\leq \D\left(V(\delta\lfloor p\rfloor)\right)+2\delta\kappa_1R_V^c\D(\mathcal B(0))\left(1-C_1\zeta(x^\infty)^{\gamma_g}\right)^{\lfloor p\rfloor} \\
&\hspace{0.5cm}\leq  \left(1-C_2 \phi(x^{\infty})^{\gamma_g}\right)^{\lfloor p\rfloor}\D(V(0))+C_3\lfloor p\rfloor\Big[\max\{1-C_1\zeta(x^{\infty})^{\gamma_g},1-C_2 \phi(x^{\infty})^{\gamma_g}\}\Big]^{\lfloor p\rfloor-1}\\
&\hspace{0.5cm}+2\delta\kappa_1R_V^c\D({\mathcal B}(0))\left(1-C_1\zeta(x^{\infty})^{\gamma_g}\right)^{\lfloor p\rfloor} \\
&\hspace{0.5cm}=  \left(1-C_2 \phi(x^{\infty})^{\gamma_g}\right)^{\lfloor \frac{t}{\delta}\rfloor}\D(V(0))+C_3\left\lfloor \frac{t}{\delta}\right\rfloor\Big[\max\{1-C_1\zeta(x^{\infty})^{\gamma_g},1-C_2 \phi(x^{\infty})^{\gamma_g}\}\Big]^{\lfloor \frac{t}{\delta}\rfloor-1}\\
&\hspace{0.5cm}+2\delta\kappa_1R_V^c\D({\mathcal B}(0))\left(1-C_1\zeta(x^{\infty})^{\gamma_g}\right)^{\lfloor\frac{t}{\delta}\rfloor}\\
&\hspace{0.5cm}=\left(1-C_2 \phi(x^\infty)^{\gamma_g}\right)^{\lfloor \frac{t}{\delta}\rfloor}\D(V(0))+2\delta\kappa_1R_V^c\D({\mathcal B}(0))\left(1-C_1\zeta(x^{\infty})^{\gamma_g}\right)^{\lfloor\frac{t}{\delta}\rfloor}\\
&\hspace{0.5cm}+\sqrt{2}N\kappa_1R_V^c\D({\mathcal B}(0))\delta\left\lfloor \frac{t}{\delta}\right\rfloor\Big[\max\{1-C_1\zeta(x^{\infty})^{\gamma_g},1-C_2 \phi(x^{\infty})^{\gamma_g}\}\Big]^{\lfloor \frac{t}{\delta}\rfloor-1}.
\end{aligned}
\end{align*}
\end{proof}

\subsection{Emergence of mono-cluster flocking}
In this subsection, we present a mono-cluster flocking estimate. Note that in Proposition \ref{P3.1} and Proposition \ref{P3.2}, we have alignment estimates for temperatures and velocities under the following a priori condition:
\[
 \sup_{0\leq t<\infty}\D(X(t))\leq x^{\infty} < \infty. 
\]
In the sequel, we will look for sufficient condition to guarantee the above a priori condition in terms of initial data and system parameters. Roughly speaking, our sufficient conditions can be stated as follows. If the initial position, velocity, and temperature of the particles are close enough, i.e., their corresponding diameters are sufficiently small, then spatial diameter will stay as bounded, hence temperature and velocity alignments emerge exponentially fast. For positive constants $x^{\infty}>0$ and $\delta>0$, we recall constants defined before:
\begin{align*}
\begin{aligned}
C_1 &=C_1(\delta):=e^{-\kappa_2(\beta_U^{in})^2\delta}\cdot\frac{1}{\gamma_g!}\Big(\delta\frac{(\beta_L^{in})^2}{N}\Big)^{\gamma_g}, \\
C_2 &=C_2(\delta):=e^{-\kappa_1\beta_U^{in} \delta}\cdot\frac{1}{\gamma_g!}\Big(\delta\frac{\beta_L^{in}}{N}\Big)^{\gamma_g}, \\
R_V^c &=R_V^c(x^{\infty},\delta):=\frac{1}{\beta_L^{in}}R_u(0)\exp\left(\frac{\kappa_2\delta\beta_U^{in}\D({\mathcal B}(0))}{C_1\zeta(x^{\infty})^{\gamma_g}}\right).
\end{aligned}
\end{align*}

\begin{theorem}\label{T3.1}
Suppose that for a given positive constants $x^{\infty}>0$ and $\delta>0$, the initial data $(X^{in}, V^{in}, {\mathcal B}^{in})$ satisfy the following relation:
\begin{align}
\begin{aligned} \label{C-30}
& \D(X(0))+\frac{\D(V(0))\delta}{C_2 \phi(x^{\infty})^{\gamma_g}} \\
& \hspace{1cm} +\frac{\sqrt{2}N\kappa_1R_V^c\D({\mathcal B}(0))\delta^2}{\big[\min\{C_1\zeta(x^{\infty})^{\gamma_g}, C_2 \phi(x^{\infty})^{\gamma_g}\}\big]^2} +\frac{2\kappa_1R_V^c\D({\mathcal B}(0))\delta^2 }{C_1\zeta(x^{\infty})^{\gamma_g}}\leq x^{\infty}.
\end{aligned}
\end{align}
Then, we have 
\begin{eqnarray*}
&& (i)~\sup_{0\leq t<\infty}\D(X(t))\leq x^{\infty}, \quad \D({\mathcal B}(t))\leq\left(1-C_1\zeta(x^{\infty})^{\gamma_g}\right)^{\lfloor \frac{t}{\delta}\rfloor}\D({\mathcal B}(0)), \\
&& (ii)~\D(V(t))\leq   \left(1-C_2 \phi(x^{\infty})^{\gamma_g}\right)^{\lfloor \frac{t}{\delta}\rfloor}\D(V(0))+2\delta\kappa_1R_V^c\D({\mathcal B}(0))\left(1-C_1\zeta(x^{\infty})^{\gamma_g}\right)^{\lfloor\frac{t}{\delta}\rfloor}\\
&&\hspace{2.3cm}+\sqrt{2}N\kappa_1R_V^c\D({\mathcal B}(0))\delta\Big\lfloor \frac{t}{\delta}\Big\rfloor\Big[\max\{1-C_1\zeta(x^{\infty})^{\gamma_g},1-C_2 \phi(x^{\infty})^{\gamma_g}\}\Big]^{\lfloor \frac{t}{\delta}\rfloor-1}.
\end{eqnarray*}
\end{theorem}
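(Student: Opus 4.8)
The plan is to run a continuity (bootstrap) argument that closes the a priori assumption \eqref{C-10}. The key structural observation is that Proposition \ref{P3.1} and Proposition \ref{P3.2} already supply the decay estimates asserted in $(i)$ and $(ii)$ \emph{provided} the spatial diameter stays below $x^{\infty}$; hence the only genuinely new content is the uniform spatial bound $\sup_{t}\D(X(t))\leq x^{\infty}$, and the smallness hypothesis \eqref{C-30} is engineered precisely so that this bound self-improves. First I would record the elementary position estimate: since $\dot x_i=v_i$, the extremal pair realizing $\D(X(t))$ gives $\frac{d}{dt}\D(X(t))\leq \D(V(t))$ for a.e. $t$, so $\D(X(t))\leq \D(X(0))+\int_0^t \D(V(s))\,ds$. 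Thus controlling $\D(X)$ reduces to integrating the three-term velocity bound of Proposition \ref{P3.2}.

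Next I would set up the bootstrap. Define $T^{\ast}:=\sup\{T\geq 0 : \D(X(t))\leq x^{\infty}\text{ for all }t\in[0,T]\}$, which is nonempty because \eqref{C-30} forces $\D(X(0))\leq x^{\infty}$ (the remaining summands being nonnegative), and suppose for contradiction that $T^{\ast}<\infty$. On $[0,T^{\ast}]$ the a priori bound holds, and the proofs of Propositions \ref{P3.1} and \ref{P3.2}, together with Lemma \ref{L3.3}, only invoke \eqref{C-10} over the interval under consideration; they therefore apply verbatim and yield the stated decay of $\D({\mathcal B})$ and $\D(V)$ on $[0,T^{\ast}]$.

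The computational heart is integrating the Proposition \ref{P3.2} bound over $[0,\infty)$. Writing $a:=1-C_1\zeta(x^{\infty})^{\gamma_g}$, $b:=1-C_2\phi(x^{\infty})^{\gamma_g}$ and $c:=\max\{a,b\}$, the first two terms integrate via $\int_0^{\infty} r^{\lfloor s/\delta\rfloor}\,ds=\delta/(1-r)$, producing $\frac{\D(V(0))\delta}{C_2\phi(x^{\infty})^{\gamma_g}}$ and $\frac{2\kappa_1 R_V^c\D({\mathcal B}(0))\delta^2}{C_1\zeta(x^{\infty})^{\gamma_g}}$, while the last term uses the differentiated geometric series $\int_0^{\infty}\lfloor s/\delta\rfloor c^{\lfloor s/\delta\rfloor-1}\,ds=\delta\sum_{n\geq 1}n c^{n-1}=\delta/(1-c)^2$ together with $1-c=\min\{C_1\zeta(x^{\infty})^{\gamma_g},C_2\phi(x^{\infty})^{\gamma_g}\}$, producing exactly $\frac{\sqrt{2}N\kappa_1 R_V^c\D({\mathcal B}(0))\delta^2}{[\min\{C_1\zeta(x^{\infty})^{\gamma_g},C_2\phi(x^{\infty})^{\gamma_g}\}]^2}$. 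Adding $\D(X(0))$ reproduces the left-hand side of \eqref{C-30}. Hence for every $t\in[0,T^{\ast}]$ we get $\D(X(t))\leq \D(X(0))+\int_0^{T^{\ast}}\D(V(s))\,ds\leq x^{\infty}$; moreover, whenever $\D(V(0))>0$ or $\D({\mathcal B}(0))>0$ the integrand is strictly positive on $(T^{\ast},\infty)$, so $\int_0^{T^{\ast}}\D(V(s))\,ds$ is strictly smaller than the full tail and the bound is \emph{strict}, $\D(X(T^{\ast}))<x^{\infty}$ (the degenerate case $\D(V(0))=\D({\mathcal B}(0))=0$ leaves $\D(X)$ constant and the claim is trivial). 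By continuity of $t\mapsto \D(X(t))$ the bound then persists on $[0,T^{\ast}+\epsilon]$ for some $\epsilon>0$, contradicting the maximality of $T^{\ast}$. Therefore $T^{\ast}=\infty$, the a priori condition \eqref{C-10} holds globally, and $(i)$ and $(ii)$ follow at once from Propositions \ref{P3.1} and \ref{P3.2}.

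I expect the main obstacle to be the logical bookkeeping of the bootstrap rather than any single inequality: one must check that the decay estimates are genuinely available on the finite interval $[0,T^{\ast}]$ and not only under a global hypothesis, and that the closing estimate is strict enough to re-open the interval past $T^{\ast}$. The one calculational subtlety is the $\lfloor s/\delta\rfloor c^{\lfloor s/\delta\rfloor-1}$ integral, where the differentiated geometric series must land exactly on the squared denominator $[\min\{\cdots\}]^2$ appearing in \eqref{C-30}.
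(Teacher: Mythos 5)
Your proposal is correct and follows essentially the same route as the paper: a continuity/bootstrap argument in which the a priori spatial bound is closed by integrating the three-term decay estimate of Proposition \ref{P3.2} over $[0,\infty)$ via $\int_0^{\infty}r^{\lfloor s/\delta\rfloor}ds=\delta/(1-r)$ and the differentiated geometric series, landing exactly on the left-hand side of \eqref{C-30}. The only (harmless) difference is bookkeeping: the paper defines the bootstrap set with strict inequality and contradicts $\D(X(T^{*}))=x^{\infty}$, whereas you use the non-strict set and supply the strictness/degenerate-case discussion explicitly, which if anything is slightly more careful than the published argument.
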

\begin{proof} We will use continuity argument. For this, we define the set $S$ as follows:
\[
S:=\Big\{T>0:\quad \D(X(t))<x^{\infty},\quad t\in[0,T]\Big\}.
\]
Then, by \eqref{C-30}, the set $S$ is nonempty. Now, we claim that $\sup S=\infty$. Suppose not, i.e. $T^*:=\sup S<\infty$. Then,  we have
\begin{equation*}
\D(X(T^*))=x^{\infty}.
\end{equation*}
It follows from $\eqref{B-10}_1$ that we have
\begin{align*}
\begin{aligned}
&\|x_i(T^*)-x_j(T^*)\|\leq\|x_i(0)-x_j(0)\|+\int_0^{T^*} \|v_i(s)-v_j(s)\|ds\\
& \leq \D(X(0))+\int_0^{T^*} \D(V(s))ds\\
&<\D(X(0))+\D(V(0))\int_0^{\infty}\left(1-C_2 \phi(x^{\infty})^{\gamma_g}\right)^{\lfloor \frac{s}{\delta}\rfloor}ds\\
&+2\delta\kappa_1R_V^c\D({\mathcal B}(0))\int_0^{\infty}\left(1-C_1\zeta(x^{\infty})^{\gamma_g}\right)^{\lfloor\frac{s}{\delta}\rfloor}ds\\
&+\sqrt{2}N\kappa_1R_V^c\D({\mathcal B}(0))\delta\int_0^{\infty}\left\lfloor \frac{s}{\delta}\right\rfloor\left[\max\{1-C_1\zeta(x^{\infty})^{\gamma_g},1-C_2 \phi(x^{\infty})^{\gamma_g}\}\right]^{\lfloor \frac{s}{\delta}\rfloor-1}ds\\
&=\D(X(0))+\D(V(0))\delta\sum_{n=0}^{\infty}\left(1-C_2 \phi(x^{\infty})^{\gamma_g}\right)^n \\
&+2\kappa_1R_V^c\D({\mathcal B}(0))\delta^2\sum_{n=0}^{\infty}\left(1-C_1\zeta(x^{\infty})^{\gamma_g}\right)^n\\
&+\sqrt{2}N\kappa_1R_V^c\D({\mathcal B}(0))\delta^2\sum_{n=1}^{\infty}n\left[\max\{1-C_1\zeta(x^{\infty})^{\gamma_g},1-C_2 \phi(x^{\infty})^{\gamma_g}\}\right]^{n-1}\\
&=\D(X(0))+\frac{\D(V(0))\delta}{C_2 \phi(x^{\infty})^{\gamma_g}}+\frac{\sqrt{2}N\kappa_1R_V^c\D({\mathcal B}(0))\delta^2}{\big[\min\{C_1\zeta(x^{\infty})^{\gamma_g},C_2 \phi(x^{\infty})^{\gamma_g}\}\big]^2}+\frac{2\kappa_1R_V^c\D({\mathcal B}(0))\delta^2 }{C_1\zeta(x^{\infty})^{\gamma_g}}\\
& \leq x^{\infty} .
\end{aligned}
\end{align*}
This implies $T^*\in S$, which is a contradiction. Therefore we have $\sup S=\infty$, i.e. $(i)$ holds. $(ii)$ and $(iii)$ follow from $(i)$ by Propositions \ref{P3.1} and \ref{P3.2}.
\end{proof}
%\begin{remark}\label{R3.1}
%The flocking condition \eqref{C-30} can be interpreted as follows: Fix $x^\infty>0$ and $\delta>0$, and let $(\bar x,\bar v,\bar \beta)\in\bbr^d\times \bbr^d \times \bbr^+$ be given. Then the left-hand side of \eqref{C-30} approaches zero as
%\[
%(x_1^{in},\cdots,x_N^{in},v_1^{in},\cdots, v_N^{in},\beta_1^{in},\cdots,\beta_N^{in})\to (\bar x,\cdots,\bar x,\bar v,\cdots, \bar v,\bar \beta,\cdots,\bar \beta).
%\]
%(Here, the limit is taken in $\bbr^{d\times N}\times \bbr^{d\times N} \times \bbr^{ N}$.) Hence, if $\D(X(0))$, $\D(V(0))$, and $\D({\mathcal B}(0))$ are sufficiently close to zero, then the condition \eqref{C-30} is satisfied.
%\end{remark}

%%%%%%%%%%%%%%%%%%%%%%%%%%%%%%%%%%%%%%%%%%%%%%%%%%%%%%%%%%%%%%%%%%%%%%%%%%%%%%%%%%

%%%%%%%%%%%%%%%%%%%%%%%%%%%%%%%%%%%%%%%%%%%%%%%%%%%%%%%%%%%%%%%%%%%%%%%%%%%%%%

\section{Emergent dynamics of the discrete model}\label{sec:4}
\setcounter{equation}{0}
In this section, we present an asymptotic flocking estimate for the discrete model \eqref{B-11}. We perform our flocking estimate in the following three steps, which are mostly parallel to those of the continuous model.
\subsection{A priori temperature alignment}
 In this subsection, we will derive a priori asymptotic alignment in temperature under the a priori assumption on the uniform boundedness of spatial diameter. For the convenience of presentation, we deal with the system \eqref{B-11}. In the sequel, the order of presentation will be exactly parallel to that of the continuous model.
\begin{lemma}[Boundedness of temperatures]\label{L4.1}
Let $\{(x_i[t],v_i[t],\beta_i[t]) \}$, be a solution to the system \eqref{B-11} with initial data $\{ (x_i^{in},v_i^{in},\beta_i^{in})\}$. Suppose that the time-step satisfies
\begin{equation} \label{D-0-0}
0<h\leq \frac{1}{\kappa_2 (\beta_U^{in})^2}.
\end{equation}
Then, the following assertions hold:
\begin{eqnarray*}
&& (i)~\beta^{in}_L\leq \beta_i[t] \leq \beta^{in}_U,\quad i=1,\cdots,N,\quad t\in\bbn\cup\{0\},\\
&& (ii)~\D({\mathcal B}[t]) \mbox{ is monotone decreasing}.
\end{eqnarray*}
\end{lemma}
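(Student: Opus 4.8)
The plan is to follow the continuous proof of Lemma~\ref{L3.1}, but with one structural change forced by the discreteness: because the maximizing and minimizing indices for the coldness can switch between two consecutive time steps, one cannot ``differentiate the envelope'' as in the continuous case. Instead I would bound \emph{every} $\beta_i[t+1]$ between the current extremal coldness values $\beta_m[t]:=\min_{1\le k\le N}\beta_k[t]$ and $\beta_M[t]:=\max_{1\le k\le N}\beta_k[t]$, and then take the maximum and minimum over $i$. Accordingly I would set up an induction on $t$ carrying hypothesis (i), i.e. $\beta^{in}_L\le\beta_i[t]\le\beta^{in}_U$ for all $i$; the base case $t=0$ holds by definition of $\beta^{in}_L,\beta^{in}_U$.

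For the inductive step I would work directly with the explicit update $\eqref{B-11}_3$ for the reciprocal coldness,
\[
\frac{1}{\beta_i[t+1]}=\frac{1}{\beta_i[t]}+\frac{h}{N}\sum_{j=1}^N\chi_{ij}\zeta(\|x_i[t]-x_j[t]\|)\big(\beta_i[t]-\beta_j[t]\big),
\]
and translate the target bound $\beta_i[t+1]\le\beta_M[t]$ into the reciprocal inequality $\frac1{\beta_i[t+1]}\ge\frac1{\beta_M[t]}$. Using $\beta_j[t]\le\beta_M[t]$ together with $0\le\chi_{ij}\zeta(\cdot)\le\kappa_2$ and at most $N$ nonzero summands, the interaction term is bounded below: since the factor $\beta_i[t]-\beta_M[t]$ is nonpositive, replacing $\frac1N\sum_j\chi_{ij}\zeta(\cdot)$ by its largest admissible value $\kappa_2$ gives
\[
\frac{1}{\beta_i[t+1]}-\frac{1}{\beta_M[t]}\ge\big(\beta_M[t]-\beta_i[t]\big)\Big[\frac{1}{\beta_i[t]\,\beta_M[t]}-h\kappa_2\Big].
\]
By the induction hypothesis $\beta_i[t]\,\beta_M[t]\le(\beta^{in}_U)^2$, so the bracket is nonnegative exactly under the time-step restriction \eqref{D-0-0}, namely $h\le 1/(\kappa_2(\beta^{in}_U)^2)$; hence $\beta_i[t+1]\le\beta_M[t]$, and in particular $\beta_i[t+1]>0$.

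The lower bound is completely symmetric: using $\beta_j[t]\ge\beta_m[t]$ and now the nonnegative factor $\beta_i[t]-\beta_m[t]$, the same manipulation yields
\[
\frac{1}{\beta_i[t+1]}-\frac{1}{\beta_m[t]}\le\big(\beta_i[t]-\beta_m[t]\big)\Big[h\kappa_2-\frac{1}{\beta_i[t]\,\beta_m[t]}\Big]\le0
\]
under the same condition \eqref{D-0-0}, so $\beta_i[t+1]\ge\beta_m[t]$. Taking the maximum and minimum over $i$ then gives $\beta_M[t+1]\le\beta_M[t]$ and $\beta_m[t+1]\ge\beta_m[t]$, which closes the induction and proves (i). Statement (ii) follows at once, since $\D({\mathcal B}[t])=\beta_M[t]-\beta_m[t]$ and the two monotonicities give $\D({\mathcal B}[t+1])\le\D({\mathcal B}[t])$.

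The main obstacle, relative to the continuous argument, is precisely the need to control each index individually rather than tracking a single extremal trajectory; once that is arranged, the role of \eqref{D-0-0} is simply to guarantee the correct sign of the bracketed factor, i.e. that the explicit Euler step is small enough not to overshoot the current coldness range.
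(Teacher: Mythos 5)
Your proof is correct and follows essentially the same route as the paper's: the paper also argues by induction and handles the possible switching of extremal indices by comparing $1/\beta_{m_{t+1}}[t+1]$ with $1/\beta_{m_t}[t]$, using precisely the factorization $\big(\beta_{m_{t+1}}[t]-\beta_{m_t}[t]\big)\big(-\tfrac{1}{\beta_{m_{t+1}}[t]\beta_{m_t}[t]}+h\kappa_2\big)$ that you use, with the time-step restriction \eqref{D-0-0} entering at the same point to fix the sign of the bracket. The only cosmetic difference is that you prove $\beta_m[t]\le\beta_i[t+1]\le\beta_M[t]$ for every index $i$ and then take extrema, whereas the paper writes the same inequality directly for the new extremal indices $m_{t+1}$ and $M_{t+1}$.
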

\begin{proof}
We define the maximal and minimal values for coldness as
\[
\beta_M[t]:=\max_{1\leq i\leq N}\beta_i[t],\quad \beta_m[t]:=\min_{1\leq i\leq N}\beta_i[t],\quad t\in\bbn\cup\{0\}.
\]
For each $t\in\bbn\cup\{0\}$, we choose extremal indices $1\leq m_t, ~M_t\leq N$ satisfying
\[ \beta_m[t]=\beta_{m_t}[t] \quad \mbox{and} \quad \beta_M[t]=\beta_{M_t}[t]. \]
We claim the following relation:
\begin{equation}\label{D-0}
\beta^{in}_L\leq \beta_m[t-1]\leq \beta_m[t]\leq \beta_M[t]\leq \beta_M[t-1]\leq \beta^{in}_U,\quad t\in\bbn,
\end{equation}
{\it Proof of claim \eqref{D-0}}: We will use the proof by induction. \newline

\noindent $\bullet$~(Initial step): The base case $t=1$ can be shown in almost the same way as in the following inductive step.  \newline

\noindent $\bullet$~(Inductive step): Suppose that the relation \eqref{D-0} holds for $t \geq 1$. Then for $t+1$, we have
\begin{align*}
\begin{aligned}
&\frac{1}{\beta_m[t+1]}-\frac{1}{\beta_m[t]}=\frac{1}{\beta_{m_{t+1}}[t+1]}-\frac{1}{\beta_{m_t}[t]}\\
&\hspace{0.5cm}=\frac{1}{\beta_{m_{t+1}}[t]}+\frac{h}{N}\sum_{j=1}^N \chi_{{m_{t+1}}j}\zeta(\|x_{m_{t+1}}[t]-x_j[t]\|)\Big(\beta_{m_{t+1}}[t]-\beta_j[t]\Big)-\frac{1}{\beta_{m_t}[t]}\\
&\hspace{0.5cm}\leq \frac{1}{\beta_{m_{t+1}}[t]}+\frac{h}{N}\sum_{j=1}^N \chi_{{m_{t+1}}j}\zeta(\|x_{m_{t+1}}[t]-x_j[t]\|)\Big(\beta_{m_{t+1}}[t]-\beta_{m_t}[t]\Big)-\frac{1}{\beta_{m_t}[t]}\\
&\hspace{0.5cm}=\Big(\beta_{m_{t+1}}[t]-\beta_{m_t}[t]\Big)\bigg(-\frac{1}{\beta_{m_{t+1}}[t]\beta_{m_t}[t]}+\frac{h}{N}\sum_{j=1}^N \chi_{{m_{t+1}}j}\zeta(\|x_{m_{t+1}}[t]-x_j[t]\|)\bigg)\\
&\hspace{0.5cm}\leq\Big(\beta_{m_{t+1}}[t]-\beta_{m_t}[t]\Big)\bigg(-\frac{1}{(\beta_U^{in})^2}+h\kappa_2\bigg)\leq0.
\end{aligned}
\end{align*}
The second and the last inequalities followed from the fact that
\[
\beta_i[t]-\beta_{m_t}[t]\geq0,~~i=1,\cdots,N~~\Rightarrow~~ \beta_{m_{t+1}}[t]-\beta_{m_t}[t]\geq0.
\]
Hence we have
\begin{equation} \label{D-1}
\beta_m[t+1] \geq \beta_m[t].
\end{equation}
On the other hand, we have
\begin{align*}
\begin{aligned}
&\frac{1}{\beta_M[t+1]}-\frac{1}{\beta_M[t]}=\frac{1}{\beta_{M_{t+1}}[t+1]}-\frac{1}{\beta_{M_t}[t]}\\
&\hspace{0.5cm}=\frac{1}{\beta_{M_{t+1}}[t]}+\frac{h}{N}\sum_{j=1}^N \chi_{{M_{t+1}}j}\zeta(\|x_{M_{t+1}}[t]-x_j[t]\|)\Big(\beta_{M_{t+1}}[t]-\beta_j[t]\Big)-\frac{1}{\beta_{M_t}[t]}\\
&\hspace{0.5cm}\geq \frac{1}{\beta_{M_{t+1}}[t]}+\frac{h}{N}\sum_{j=1}^N \chi_{{M_{t+1}}j}\zeta(\|x_{M_{t+1}}[t]-x_j[t]\|)\Big(\beta_{M_{t+1}}[t]-\beta_{M_t}[t]\Big)-\frac{1}{\beta_{M_t}[t]}\\
&\hspace{0.5cm}=\Big(\beta_{M_{t+1}}[t]-\beta_{M_t}[t]\Big)\bigg(-\frac{1}{\beta_{M_{t+1}}[t]\beta_{M_t}[t]}+\frac{h}{N}\sum_{j=1}^N \chi_{{M_{t+1}}j}\zeta(\|x_{M_{t+1}}[t]-x_j[t]\|)\bigg)\\
&\hspace{0.5cm}\geq\Big(\beta_{M_{t+1}}[t]-\beta_{M_t}[t]\Big)\bigg(-\frac{1}{(\beta^{in}_U)^2}+h \kappa_2\bigg)\geq0.
\end{aligned}
\end{align*}
The second and the last inequalities followed from the fact that
\[
\beta_i[t]-\beta_{M_t}[t]\leq0,~~i=1,\cdots,N~~\Rightarrow~~ \beta_{M_{t+1}}[t]-\beta_{M_t}[t]\leq0.
\]

Therefore, we have
\begin{equation} \label{D-2}
\beta_M[t+1] \leq \beta_M[t].
\end{equation}
Finally, we combine \eqref{D-1} and \eqref{D-2} to derive the estimate \eqref{D-0}. Hence \eqref{D-0} also holds for $t+1$, and the induction is complete.
\end{proof}

Next, note that $\eqref{B-11}_3$ is equivalent to the following relation:
\begin{equation} \label{D-3}
\beta_i[t+1] =\beta_i[t]+\frac{h}{N}\beta_i[t]\beta_i[t+1]\sum_{j=1}^N \chi_{ij}\zeta(\|x_i[t]-x_j[t]\|)\Big(\beta_j[t]-\beta_i[t]\Big),\quad t\in\bbn\cup\{0\}.
\end{equation}
In fact, we can rewrite $\eqref{D-3}$ in vector form. For this, we define an $N\times N$ matrix $L[t]$:
\[
L[t]:=D[t]-A[t],
\]
where the matrices $A[t]=(a_{ij}[t])$ and $D[t]=\mbox{diag}(d_1[t],\cdots,d_N[t])$ are defined by the following relations:
\[
a_{ij}[t]:=\chi_{ij}\zeta(\|x_i[t]-x_j[t]\|)\quad\mbox{and}\quad d_i[t]=\sum_{j=1}^N \chi_{ij}\zeta(\|x_i[t]-x_j[t]\|)
\]
We also define
\[
\Gamma[t]:=\mbox{diag}\big(\beta_1[t],\cdots,\beta_N[t]\big).
\]
Then we can rewrite $\eqref{D-3}$ as follows.
\begin{equation} \label{D-4}
{\mathcal B}[t+1]=\bigg(I-\frac{h}{N}\Gamma[t]\Gamma[t+1]L[t]\bigg) {\mathcal B}[t], \quad t\in\bbn\cup\{0\}.
\end{equation}
\begin{proposition}\label{P4.1}
Suppose that time-step and initial data satisfy \eqref{D-0-0}, and let $\{ (x_i,v_i,\beta_i)\}$ be a solution to system~\eqref{B-11} satisfying a priori condition:
\begin{equation}\label{D-5}
\sup_{t\in\bbn\cup\{0\}}\D(X[t])\leq x^{\infty} < \infty.
\end{equation}
Then, we have the exponential decay of $\D({\mathcal B}[t])$: for any given integer $n_0\geq\gamma_g$ we have
\[
\D({\mathcal B}[t])\leq\left(1-D_1\zeta(x^{\infty})^{\gamma_g}\right)^{\lfloor \frac{t}{n_0}\rfloor}\D({\mathcal B}[0]),\quad t\in\bbn\cup\{0\},
\]
where the positive constant $D_1$ is given as follows.
\[
D_1=D_1(n_0):=\binom{n_0}{\gamma_g}(1-h\kappa_2 (\beta^{in}_U)^2)^{n_0-\gamma_g}\Big(\frac{h (\beta^{in}_L)^2}{N}\Big)^{\gamma_g}.
\]
\end{proposition}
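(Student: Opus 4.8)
The plan is to mimic the continuous argument of Proposition \ref{P3.1}, but to replace the continuous state-transition matrix by a product of $n_0$ one-step update matrices. Set $P[t] := I - \frac{h}{N}\Gamma[t]\Gamma[t+1]L[t]$, so that \eqref{D-4} reads ${\mathcal B}[t+1] = P[t]{\mathcal B}[t]$. First I would record two structural facts about $P[t]$. Since $L[t]$ has zero row sums, $P[t]$ is automatically stochastic. For nonnegativity, the off-diagonal entries $P[t]_{ij} = \frac{h}{N}\beta_i[t]\beta_i[t+1]\chi_{ij}\zeta(\|x_i[t]-x_j[t]\|)$ are manifestly $\geq 0$, while the diagonal entry $P[t]_{ii} = 1 - \frac{h}{N}\beta_i[t]\beta_i[t+1]\sum_{j\neq i}\chi_{ij}\zeta(\|x_i[t]-x_j[t]\|)$ is nonnegative thanks to the bounds $\beta_i\in[\beta_L^{in},\beta_U^{in}]$ from Lemma \ref{L4.1} together with the time-step restriction \eqref{D-0-0}.

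The heart of the argument is a uniform pointwise lower bound on each factor,
\[
P[t] \geq (1 - h\kappa_2(\beta_U^{in})^2)\,I + \frac{h(\beta_L^{in})^2}{N}A^\infty =: \alpha I + \eta A^\infty,
\]
where $A^\infty = (\chi_{ij}\zeta(x^\infty))$ and both $\alpha,\eta\geq0$. For the off-diagonal entries this follows from the monotonicity of $\zeta$ and $\|x_i-x_j\|\leq x^\infty$; for the diagonal entries it reduces, after using $\beta_i\leq\beta_U^{in}$ and $\sum_{j\neq i}\chi_{ij}\zeta\leq(N-1)\kappa_2$, to the elementary inequality $\kappa_2(\beta_U^{in})^2\geq(\beta_L^{in})^2\zeta(x^\infty)$, which holds since $\zeta$ is non-increasing. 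Because all factors are nonnegative and dominate $\alpha I + \eta A^\infty \geq 0$, the componentwise ordering survives multiplication, so with $Q_m := P[mn_0-1]\cdots P[(m-1)n_0]$ we get $Q_m \geq (\alpha I + \eta A^\infty)^{n_0}$. Expanding by the binomial theorem (valid since $I$ commutes with $A^\infty$) and discarding every nonnegative term except the $k=\gamma_g$ one gives
\[
Q_m \geq \binom{n_0}{\gamma_g}\alpha^{n_0-\gamma_g}\eta^{\gamma_g}(A^\infty)^{\gamma_g} = D_1\,(A^\infty)^{\gamma_g} \geq 0,
\]
which recovers exactly the constant $D_1$ in the statement.

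From here the conclusion follows as in the continuous case. Applying Lemma \ref{L2.1} to $A^\infty$, whose diagonal is positive, whose underlying digraph is $\G(\chi)$ of smallest depth $\gamma_g$, and whose smallest positive entry is $\zeta(x^\infty)$, yields $\mu((A^\infty)^{\gamma_g})\geq\zeta(x^\infty)^{\gamma_g}$; combining with the monotonicity \eqref{B-1-1} of the ergodicity coefficient and with $\mu(cM)=c\mu(M)$, one obtains $\mu(Q_m)\geq D_1\zeta(x^\infty)^{\gamma_g}$. Since $Q_m$ is a product of stochastic matrices it is stochastic, so Lemma \ref{L2.2} (with vanishing inhomogeneous term) applied to ${\mathcal B}[mn_0]=Q_m{\mathcal B}[(m-1)n_0]$ gives the block contraction $\D({\mathcal B}[mn_0])\leq(1-D_1\zeta(x^\infty)^{\gamma_g})\D({\mathcal B}[(m-1)n_0])$. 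Iterating in $m$ and then using the monotone decrease of $\D({\mathcal B}[\cdot])$ from Lemma \ref{L4.1} to fill in the indices $t$ lying between consecutive multiples of $n_0$ produces the stated estimate with exponent $\lfloor t/n_0\rfloor$.

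I expect the main obstacle to be the diagonal part of the lower bound $P[t]\geq\alpha I+\eta A^\infty$: one must retain enough diagonal mass to build the $I$ term and enough off-diagonal mass to build the $A^\infty$ term simultaneously, and the precise split is exactly what forces $\kappa_2(\beta_U^{in})^2\geq(\beta_L^{in})^2\zeta(x^\infty)$ and fixes the shape of $D_1$. A secondary point requiring care is that a single step $P[t]$ need not be scrambling on a general digraph; it is the passage to the $n_0$-step block with $n_0\geq\gamma_g$ that lets information propagate along the spanning tree, which is why the decay exponent is $\lfloor t/n_0\rfloor$ rather than $t$.
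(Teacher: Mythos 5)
Your proposal is correct and follows essentially the same route as the paper: the same one-step lower bound $I-\frac{h}{N}\Gamma[t]\Gamma[t+1]L[t]\geq(1-h\kappa_2(\beta_U^{in})^2)I+\frac{h(\beta_L^{in})^2}{N}A^\infty$, the same binomial extraction of the $\gamma_g$-th term to get the block matrix $\geq D_1(A^\infty)^{\gamma_g}$, and the same combination of Lemma \ref{L2.1}, Lemma \ref{L2.2} and the monotonicity from Lemma \ref{L4.1} to conclude. The only differences are cosmetic (you verify stochasticity via the zero row sums of $L[t]$ rather than the invariance of the all-ones vector, and you make the diagonal-versus-off-diagonal bookkeeping in the lower bound explicit), so no further comment is needed.
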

\begin{proof} First, note that
\[
-\frac{1}{N}\Gamma[t]\Gamma[t+1]L[t]=\frac{1}{N}\Gamma[t]\Gamma[t+1](A[t]-D[t])\geq\frac{(\beta^{in}_L)^2}{N}A^{\infty} -\kappa_2 (\beta^{in}_U)^2I,
\]
where $A^{\infty}=(a_{ij}^{\infty})$ is a nonnegative matrix defined by
\[
a_{ij}^\infty:=\chi_{ij}\zeta(x^{\infty}).
\]
Then, the terms inside the parenthesis of \eqref{D-4} can be estimated as follows. 
\begin{equation}\label{D-6}
I-\frac{h}{N}\Gamma[t]\Gamma[t+1]L[t]\geq (1-h\kappa_2 (\beta^{in}_U)^2)I+\frac{h (\beta^{in}_L)^2}{N}A^{\infty} \geq0,\quad t\in\bbn\cup\{0\}.
\end{equation}
For any $t_1,t_2\in\bbn\cup\{0\}$ with $t_2-t_1\geq\gamma_g$, define the matrix $\Phi[t_2,t_1]$ as follows:
\begin{align*}
\begin{aligned}
\Phi[t_2,t_1]&:=\Big(I-\frac{h}{N}\Gamma[t_2-1]\Gamma[t_2]L[t_2-1]\Big)\Big(I-\frac{h}{N}\Gamma[t_2-2]\Gamma[t_2-1]L[t_2-2]\Big)\cdots\\
&\times \Big(I-\frac{h}{N}\Gamma[t_1]\Gamma[t_1+1]L[t_1]\Big).
\end{aligned}
\end{align*}
Then it follows from \eqref{D-6} that
\begin{align} \label{D-8}
\begin{aligned}
\Phi\left[t_2,t_1\right]&\geq \Big((1-h\kappa_2(\beta^{in}_U)^2)I+\frac{h (\beta^{in}_L)^2}{N}A^{\infty} \Big)^{t_2-t_1}\\
&=\sum_{n=0}^{t_2-t_1} \binom{t_2-t_1}{n}(1-h\kappa_2 (\beta^{in}_U)^2)^{t_2-t_1-n}\Big(\frac{h(\beta_L^{in})^2}{N}A^{\infty} \Big)^{n}\\
&\geq\binom{t_2-t_1}{\gamma_g}(1-h\kappa_2 (\beta^{in}_U)^2)^{t_2-t_1-\gamma_g}
\Big(\frac{h (\beta^{in}_L)^2}{N}A^{\infty} \Big)^{\gamma_g}.
\end{aligned}
\end{align}
Now, we fix $m\in\bbn$, and put $t_1=(m-1)n_0$, $t_2=mn_0$ in \eqref{D-8} to obtain 
\[
\Phi\Big[mn_0,(m-1)n_0\Big]\geq \binom{n_0}{\gamma_g}(1-h\kappa_2 (\beta^{in}_U)^2)^{n_0-\gamma_g}\Big(\frac{h(\beta^{in}_L)^2}{N}A^{\infty} \Big)^{\gamma_g}= D_1
(A^{\infty})^{\gamma_g}\geq0.
\]
Therefore, we have
\begin{equation}\label{D-9}
\mu\Big(\Phi\Big[mn_0,(m-1)n_0\Big]\Big)\geq D_1\mu((A^{\infty})^{\gamma_g})\geq D_1 \zeta(x^{\infty})^{\gamma_g},
\end{equation}
where in the last inequality, we used \eqref{B-1-1} and Lemma \ref{L2.1}. \newline

The nonnegative matrix $\Phi\Big[mn_0,(m-1)n_0\Big]$ is actually stochastic, because we have the following for each $(m-1)n_0\leq t< mn_0$:
\[
[1,\cdots,1]^\top=\Big(I-\frac{h}{N}\Gamma[t]\Gamma[t+1]L[t]\Big)[1,\cdots,1]^\top.
\]
Now, it follows from the relation:
\[ {\mathcal B}\big[mn_0\big]=\Phi\big(mn_0,(m-1)n_0\big){\mathcal B}[(m-1)n_0] \]
that we can use Lemma \ref{L2.2} with $B=0$ and \eqref{D-9} to obtain 
\begin{align*}
\begin{aligned}
\D\left({\mathcal B}[mn_0]\right)&\leq\left(1-\mu\Big(\Phi[mn_0,(m-1)n_0]\Big)\right)\D({\mathcal B}[(m-1)n_0])\\
&\leq \left(1-D_1\zeta(x^{\infty})^{\gamma_g}\right)\D({\mathcal B}[(m-1)n_0]),\quad m\in\bbn.
\end{aligned}
\end{align*}
By induction, we have 
\[
\D\left({\mathcal B} \left[mn_0 \right]\right)\leq \left(1-D_1\zeta(x^{\infty})^{\gamma_g}\right)^m\D({\mathcal B}[0]),\quad m\in\bbn.
\]
So for any $t\in\bbn\cup\{0\}$, we have the following:
\[
\D({\mathcal B}[t])\leq \D\left({\mathcal B}\Big[n_0\Big\lfloor \frac{t}{n_0}\Big\rfloor\Big]\right)\leq \left(1-D_1\zeta(x^{\infty})^{\gamma_g}\right)^{\lfloor \frac{t}{n_0}\rfloor}\D({\mathcal B}[0]).
\]
The first inequality was due to Lemma \ref{L4.1}.
\end{proof}

\subsection{A priori velocity alignment} In this subsection, we will derive asymptotic velocity alignment under the a priori assumption on the uniform boundedness of spatial diameters. For the convenience of presentation, we set
\begin{align*}
\begin{aligned} 
& u_i[t]:=\frac{v_i[t]}{\theta_i[t]}=\beta_i[t]v_i[t],\quad i=1,\cdots,N,\quad \mbox{and} \\
& R_u[t]:=\max_{1\leq i\leq N}\|u_i[t]\|,\quad t\in\bbn\cup\{0\}.
\end{aligned}
\end{align*}
As a first step, we study the boundedness of velocities.
\begin{lemma}[Boundedness of velocities] \label{L4.2}
Suppose that the time-step and initial data satisfy
\[ 0<h\leq\min \Big \{\frac{1}{\kappa_2 (\beta^{in}_U)^2},\frac{\beta^{in}_L}{2\kappa_1 (\beta^{in}_U)^2} \Big \},     \]
and let $\{ (x_i,v_i,\beta_i)\}$ be a solution to system~\eqref{B-11} satisfying a priori condition \eqref{D-5}. Then, velocities of the particles are bounded: for any given integer $n_0\geq \gamma_g$ we have
\[ \|v_i[t]\|\leq \frac{1}{\beta^{in}_L} R_u[0]\exp\bigg[\frac{h n_0\kappa_2 \beta^{in}_U\D({\mathcal B}[0])}{D_1\zeta(x^{\infty})^{\gamma_g}}\bigg]=:R_V^d=R_V^d(x^\infty,n_0),\quad t\in\bbn\cup\{0\}, \]
where $D_1$ is the constant defined in Proposition \ref{P4.1}.
\end{lemma}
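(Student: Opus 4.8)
The plan is to mirror the continuous argument of Lemma \ref{L3.3}, replacing the Gronwall differential inequality for $R_u$ by a discrete recursion for $R_u[t]$. Writing $\phi_{ij}[t]:=\phi(\|x_i[t]-x_j[t]\|)$, $\zeta_{ij}[t]:=\zeta(\|x_i[t]-x_j[t]\|)$ and $r_i[t]:=\beta_i[t+1]/\beta_i[t]$, I would first rewrite the velocity update in \eqref{B-11} through $u_i=\beta_i v_i$. Using $\beta_j[t]v_j[t]=u_j[t]$ and $\beta_i[t+1]v_i[t]=r_i[t]u_i[t]$, multiplying the $v_i$-equation by $\beta_i[t+1]$ gives
\[
u_i[t+1]=\Big(r_i[t]-\tfrac{h\beta_i[t+1]}{N}\sum_{j=1}^N\chi_{ij}\phi_{ij}[t]\Big)u_i[t]+\frac{h\beta_i[t+1]}{N}\sum_{j=1}^N\chi_{ij}\phi_{ij}[t]\,u_j[t].
\]
The coefficients on the right-hand side sum to $r_i[t]$, so \emph{once} I verify that they are all nonnegative, the triangle inequality yields $\|u_i[t+1]\|\le r_i[t]\,R_u[t]$ for every $i$, hence $R_u[t+1]\le(\max_i r_i[t])R_u[t]$. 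Since $\|v_i[t]\|=\|u_i[t]\|/\beta_i[t]\le R_u[t]/\beta^{in}_L$ by Lemma \ref{L4.1}, it then suffices to bound $R_u[t]$.

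The key and most delicate step is the nonnegativity of the diagonal coefficient, and this is exactly where the second term in the minimum defining the admissible time-step is used. I would extract two bounds on $r_i[t]$. From \eqref{D-3}, dividing by $\beta_i[t]$ gives $r_i[t]=1+\frac{h}{N}\beta_i[t+1]\sum_{j}\chi_{ij}\zeta_{ij}[t](\beta_j[t]-\beta_i[t])$, whence the \emph{upper} bound $r_i[t]\le 1+h\kappa_2\beta^{in}_U\D({\mathcal B}[t])$, using $\zeta\le\kappa_2$, $\beta_i[t+1]\le\beta^{in}_U$ and $|\beta_j-\beta_i|\le\D({\mathcal B}[t])$. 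For the \emph{lower} bound I would instead use the reciprocal form $\eqref{B-11}_3$: multiplying by $\beta_i[t]$ gives $\beta_i[t]/\beta_i[t+1]=1+\frac{h\beta_i[t]}{N}\sum_{j}\chi_{ij}\zeta_{ij}[t](\beta_i[t]-\beta_j[t])\le 1+h\kappa_2(\beta^{in}_U)^2\le 2$, where the last step uses $\D({\mathcal B}[t])\le\D({\mathcal B}[0])\le\beta^{in}_U$ together with $h\le 1/(\kappa_2(\beta^{in}_U)^2)$; consequently $r_i[t]\ge 1/2$. Meanwhile $\frac{h\beta_i[t+1]}{N}\sum_{j}\chi_{ij}\phi_{ij}[t]\le h\kappa_1\beta^{in}_U\le \beta^{in}_L/(2\beta^{in}_U)\le 1/2$ precisely when $h\le \beta^{in}_L/(2\kappa_1(\beta^{in}_U)^2)$. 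Combining, the diagonal coefficient is $\ge 1/2-1/2=0$, while the off-diagonal coefficients are manifestly nonnegative.

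With nonnegativity in hand and $\max_i r_i[t]\le 1+h\kappa_2\beta^{in}_U\D({\mathcal B}[t])$, the recursion $R_u[t+1]\le(1+h\kappa_2\beta^{in}_U\D({\mathcal B}[t]))R_u[t]$ iterates, and $1+x\le e^x$ gives $R_u[t]\le R_u[0]\exp\big(h\kappa_2\beta^{in}_U\sum_{s=0}^{t-1}\D({\mathcal B}[s])\big)$. I would then invoke Proposition \ref{P4.1}: since $\D({\mathcal B}[s])\le(1-D_1\zeta(x^\infty)^{\gamma_g})^{\lfloor s/n_0\rfloor}\D({\mathcal B}[0])$ and each value of $\lfloor s/n_0\rfloor$ is attained by exactly $n_0$ consecutive indices,
\[
\sum_{s=0}^{\infty}\D({\mathcal B}[s])\le n_0\,\D({\mathcal B}[0])\sum_{k=0}^{\infty}\big(1-D_1\zeta(x^\infty)^{\gamma_g}\big)^k=\frac{n_0\,\D({\mathcal B}[0])}{D_1\zeta(x^\infty)^{\gamma_g}}.
\]
Substituting this bound into the exponential and dividing by $\beta^{in}_L$ reproduces exactly the claimed constant $R_V^d$. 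The only genuine obstacle is the sign control of the diagonal coefficient described above, which forces the refined restriction on $h$; the remaining estimates are the faithful discrete transcription of the continuous bootstrap in Lemma \ref{L3.3}.
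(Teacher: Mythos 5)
Your argument is correct, and it reaches the same key recursion $R_u[t+1]\le(1+h\kappa_2\beta^{in}_U\D({\mathcal B}[t]))R_u[t]$ as the paper, but by a genuinely different route. The paper (Appendix~\ref{App-A}) works with the squared quantity $\|u_i[t+1]\|^2-\|u_i[t]\|^2$, splits it into a temperature part ${\mathcal I}_{21}$ and a velocity part ${\mathcal I}_{22}$, and controls ${\mathcal I}_{22}$ by observing that a certain quadratic ${\mathcal F}_1(Q)$ is convex and maximized at an endpoint of the admissible range of $Q$; the restriction $h\le \beta^{in}_L/(2\kappa_1(\beta^{in}_U)^2)$ enters at the very end to make the coefficient $1-2h\kappa_1(\beta^{in}_U)^2/\beta^{in}_L$ nonnegative. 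You instead write the $u$-update as a linear recursion whose coefficients sum to $r_i[t]=\beta_i[t+1]/\beta_i[t]$, and reduce everything to nonnegativity of the diagonal coefficient: the two halves of the time-step condition give $r_i[t]\ge 1/2$ (via the reciprocal form of $\eqref{B-11}_3$ and $h\kappa_2(\beta^{in}_U)^2\le1$) and $h\kappa_1\beta^{in}_U\le \beta^{in}_L/(2\beta^{in}_U)\le 1/2$, after which the triangle inequality immediately yields $\|u_i[t+1]\|\le r_i[t]R_u[t]$. This is a cleaner and more transparent mechanism — it makes explicit that the scheme is a nonnegative (sub-stochastic up to the factor $r_i[t]$) combination, and it localizes exactly where the time-step restriction is needed — at the cost of obtaining a slightly coarser intermediate bound than the paper's term-by-term squared estimate. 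The tail of your argument (iterating, $1+x\le e^x$, summing the geometric bound from Proposition~\ref{P4.1} in blocks of length $n_0$, and dividing by $\beta^{in}_L$) coincides with the paper's and reproduces the stated constant $R_V^d$ exactly.
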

\begin{proof} Since the proof is rather lengthy, we leave its proof in \ref{App-A}.
\end{proof}
Our next job is to introduce an inequality for $\D(V)$ which will be used later.
\begin{lemma} \label{L4.3}
Suppose that the time-step and initial data satisfy
\[ 0<h\leq\min \Big \{\frac{1}{\kappa_2 (\beta^{in}_U)^2},\frac{\beta^{in}_L}{2\kappa_1 (\beta^{in}_U)^2} \Big \},     \]
and let $\{ (x_i,v_i,\beta_i)\}$ be a solution to system~\eqref{B-11} satisfying a priori condition \eqref{D-5}. Then, for any given integer $n_0\geq\gamma_g$ we have
\[
\D(V[t+1])\leq\D(V[t])+ 2h\kappa_1 R_V^d\D({\mathcal B}[t]),\quad t\in\bbn\cup\{0\},
\]
where $R_V^d=R_V^d(x^\infty,n_0)$ is the constant defined in Lemma \ref{L4.2}.
\end{lemma}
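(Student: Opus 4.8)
The plan is to imitate the continuous estimate of Lemma \ref{L3.4}, but to replace the ``nonpositive alignment term'' step by the stochastic-matrix contraction argument already used in the proof of Lemma \ref{L2.2}. First I would rewrite the velocity update $\eqref{B-11}_2$ in terms of the modulated velocity $u_i=\beta_i v_i$ and split the interaction into an alignment part and a temperature-driven source. Writing $\phi_{ik}:=\phi(\|x_i[t]-x_k[t]\|)$ and using $\beta_k v_k-\beta_i v_i=\beta_i(v_k-v_i)+(\beta_k-\beta_i)v_k$, one gets
\[
v_i[t+1]=\sum_{k=1}^N w_{ik}v_k[t]+S_i,\qquad S_i:=\frac{h}{N}\sum_{k=1}^N\chi_{ik}\phi_{ik}(\beta_k[t]-\beta_i[t])v_k[t],
\]
where $w_{ik}:=\frac{h\beta_i[t]}{N}\chi_{ik}\phi_{ik}$ for $k\neq i$ and $w_{ii}:=1-\frac{h\beta_i[t]}{N}\sum_{k\neq i}\chi_{ik}\phi_{ik}$ (the diagonal terms drop out of $S_i$ since $\beta_i-\beta_i=0$).

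The matrix $W=(w_{ik})$ has unit row sums by construction, so the crucial point is that $W$ is \emph{nonnegative}. By Lemma \ref{L4.1} one has $\beta_i[t]\le\beta_U^{in}$, and with $\phi_{ik}\le\kappa_1$ and $\sum_{k\neq i}\chi_{ik}\le N-1$ this gives $\frac{h\beta_i[t]}{N}\sum_{k\neq i}\chi_{ik}\phi_{ik}\le h\kappa_1\beta_U^{in}$. Since the hypothesis $h\le\frac{\beta_L^{in}}{2\kappa_1(\beta_U^{in})^2}$ implies $h\le\frac{1}{\kappa_1\beta_U^{in}}$ (using $\beta_L^{in}\le\beta_U^{in}$), the bound is $\le1$, so $w_{ii}\ge0$ and $W$ is stochastic.

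Next I would pick indices $i,j$ with $\D(V[t+1])=\|v_i[t+1]-v_j[t+1]\|$ and expand
\[
\|v_i[t+1]-v_j[t+1]\|^2=\Big\langle v_i[t+1]-v_j[t+1],\,\sum_{k}(w_{ik}-w_{jk})v_k[t]\Big\rangle+\big\langle v_i[t+1]-v_j[t+1],\,S_i-S_j\big\rangle.
\]
For the first inner product, since $\sum_k(w_{ik}-w_{jk})=0$ I would split the coefficients into positive and negative parts exactly as in \eqref{B-2-1}--\eqref{B-3}; because $W\ge0$ the mass of the positive part equals $1-\sum_k\min\{w_{ik},w_{jk}\}\le1$, which yields the bound $\le\D(V[t])\,\|v_i[t+1]-v_j[t+1]\|$. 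For the source term, Cauchy--Schwarz together with $\phi_{ik}\le\kappa_1$, $|\beta_k[t]-\beta_i[t]|\le\D({\mathcal B}[t])$ and the velocity bound $\|v_k[t]\|\le R_V^d$ from Lemma \ref{L4.2} gives $\|S_i\|\le h\kappa_1 R_V^d\,\D({\mathcal B}[t])$, hence $\langle v_i[t+1]-v_j[t+1],S_i-S_j\rangle\le 2h\kappa_1 R_V^d\,\D({\mathcal B}[t])\,\|v_i[t+1]-v_j[t+1]\|$. Dividing the resulting inequality by $\|v_i[t+1]-v_j[t+1]\|=\D(V[t+1])$ (the case $\D(V[t+1])=0$ being trivial) produces the claim.

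The main obstacle is not any single estimate but obtaining the clean constant $2$: applying Lemma \ref{L2.2} as a black box to $V[t+1]=WV[t]+B$ would contribute a term $\sqrt2\,\|B\|_F$ of size $O(\sqrt{N}\,h\kappa_1 R_V^d\,\D({\mathcal B}[t]))$, which is too lossy. Keeping $v_i[t+1]-v_j[t+1]$ inside the inner product and estimating the alignment and source contributions separately is precisely what removes the spurious $\sqrt N$. The only other point requiring care is the verification that the time-step restriction forces $W\ge0$, since the positive/negative splitting argument and the bound on the positive mass both break down if the diagonal entries $w_{ii}$ are allowed to be negative.
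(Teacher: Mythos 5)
Your proposal is correct, and it reaches the stated inequality by a genuinely different mechanism than the paper's Appendix~B. The paper writes $v_i[t+1]-v_j[t+1]=(1-P)(v_i[t]-v_j[t])+X+Y$ with a single scalar $P$ combining both rows and the ``cross'' terms $\beta_i(v_k-v_j)$, $\beta_j(v_k-v_i)$ collected in $X$; it then shows the alignment contribution $\mathcal I_{31}$ is bounded by $\D(V[t])^2-\|v_i[t]-v_j[t]\|^2$ via a convexity-in-$\|X\|$ endpoint argument (the function $\mathcal F_2$), using the time-step restriction only through $P\le 1$. You instead put the update in genuine row-stochastic form $v_i[t+1]=\sum_k w_{ik}v_k[t]+S_i$ and run the positive/negative-part splitting of \eqref{B-2-1}--\eqref{B-3} with the test vector $v_i[t+1]-v_j[t+1]$ kept inside the inner product, using the time-step restriction to guarantee $w_{ii}\ge0$ (indeed $h\kappa_1\beta_U^{in}\le\tfrac12$ under the stated hypothesis, so $W$ is stochastic and $\sum_k\max\{0,w_{ik}-w_{jk}\}\le1$). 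Your treatment of the source term is the same as the paper's bound on $Y$, namely $\|S_i-S_j\|\le 2h\kappa_1R_V^d\D({\mathcal B}[t])$ via Lemma \ref{L4.2}, and your observation that invoking Lemma \ref{L2.2} as a black box would cost a spurious $\sqrt{2N}$ factor is exactly right --- this is also why the paper avoids Lemma \ref{L2.2} here and argues by hand. The trade-off is essentially stylistic: your route is more systematic and reuses the scrambling-matrix machinery already set up in Section \ref{sec:2}, while the paper's convexity trick avoids having to verify nonnegativity of the diagonal weights explicitly (it only needs $P\le1$) and parallels the structure of the proof of Lemma \ref{L4.2} in Appendix~A.
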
 
\begin{proof} Since the proof is lengthy, we leave its proof in Appendix B.
\end{proof}

\begin{proposition}[Exponential decay of the velocity diameter]  \label{P4.2}
Suppose that the time-step and initial data satisfy
\[ 0<h\leq\min \Big \{\frac{1}{\kappa_2 (\beta^{in}_U)^2},\frac{\beta^{in}_L}{2\kappa_1 (\beta^{in}_U)^2} \Big \},     \]
and let $\{ (x_i,v_i,\beta_i)\}$ be a solution to system~\eqref{B-11} satisfying a priori condition \eqref{D-5}. Then we have the exponential decay of $\D(V[t])$: for any given integer $n_0\geq\gamma_g$ we have
\begin{align*}
\begin{aligned}
\D(V[t])&\leq   \left(1-D_2 \phi(x^{\infty})^{\gamma_g}\right)^{\lfloor\frac{t}{n_0}\rfloor}\D(V[0])+2hn_0\kappa_1R_V^d\left(1-D_1\zeta(x^{\infty})^{\gamma_g}\right)^{\lfloor\frac{t}{n_0}\rfloor}\D({\mathcal B}[0])\\
&\hspace{0.5cm}+\sqrt{2}hn_0N\kappa_1R_V^d\D({\mathcal B}[0])\Big\lfloor\frac{t}{n_0}\Big\rfloor\Big[\max\{1-D_1\zeta(x^{\infty})^{\gamma_g},1-D_2 \phi(x^{\infty})^{\gamma_g}\}\Big]^{\lfloor\frac{t}{n_0}\rfloor-1},
\end{aligned}
\end{align*}
where $D_1$ is the constant defined in Proposition \ref{P4.1}, and
\[
D_2=D_2(n_0):=\binom{n_0}{\gamma_g}(1-h\kappa_1 \beta^{in}_U)^{n_0-\gamma_g}\Big(\frac{h\beta^{in}_L}{N}\Big)^{\gamma_g}.
\]
\end{proposition}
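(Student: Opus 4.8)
The plan is to mirror the continuous argument of Proposition~\ref{P3.2}, replacing the state-transition matrix of the ODE by a finite product of one-step matrices. First I would rewrite the velocity update $\eqref{B-11}_2$ in the affine matrix form
\[
V[t+1]=\Big(I-\frac{h}{N}\Gamma[t]\tilde L[t]\Big)V[t]+\frac{h}{N}\Lambda[t],\qquad \Lambda[t]=B[t]V[t],
\]
using exactly the matrices $\tilde L[t]=\tilde D[t]-\tilde A[t]$, $\Gamma[t]$, $\Lambda[t]$ and $B[t]$ introduced for the continuous model, now evaluated at the discrete configuration. Setting $M[t]:=I-\frac{h}{N}\Gamma[t]\tilde L[t]$ and defining the discrete transition product $\tilde\Phi[t_2,t_1]:=M[t_2-1]\cdots M[t_1]$ (with $\tilde\Phi[t_1,t_1]:=I$), the discrete variation-of-parameters formula gives, over a window of length $n_0$,
\[
V[mn_0]=\tilde\Phi[mn_0,(m-1)n_0]\,V[(m-1)n_0]+\frac{h}{N}\sum_{k=(m-1)n_0}^{mn_0-1}\tilde\Phi[mn_0,k+1]\,\Lambda[k].
\]

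Next I would establish that $\tilde\Phi[mn_0,(m-1)n_0]$ is a nonnegative stochastic matrix whose ergodicity coefficient is bounded below by $D_2\phi(x^{\infty})^{\gamma_g}$. Nonnegativity follows from the estimate $M[t]\ge(1-h\kappa_1\beta_U^{in})I+\frac{h\beta_L^{in}}{N}\tilde A^{\infty}\ge0$, where $\tilde a_{ij}^{\infty}=\chi_{ij}\phi(x^{\infty})$; the time-step restriction $h\le\beta_L^{in}/(2\kappa_1(\beta_U^{in})^2)$ guarantees $1-h\kappa_1\beta_U^{in}\ge0$. Expanding the $n_0$-fold product by the binomial theorem and retaining only the $\gamma_g$-th term, exactly as in \eqref{D-8}, yields $\tilde\Phi[mn_0,(m-1)n_0]\ge D_2(\tilde A^{\infty})^{\gamma_g}$, so the ergodicity bound follows from \eqref{B-1-1} and Lemma~\ref{L2.1}. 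Stochasticity follows from $\tilde L[t][1,\dots,1]^\top=0$, hence $M[t][1,\dots,1]^\top=[1,\dots,1]^\top$, which also shows that every partial product $\tilde\Phi[mn_0,k+1]$ is stochastic.

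With these ingredients I would apply Lemma~\ref{L2.2} to the solution formula, with $A=\tilde\Phi[mn_0,(m-1)n_0]$ and perturbation the sum $\frac{h}{N}\sum_k\tilde\Phi[mn_0,k+1]\Lambda[k]$, producing
\[
\D(V[mn_0])\le\big(1-D_2\phi(x^{\infty})^{\gamma_g}\big)\D(V[(m-1)n_0])+\sqrt{2}\Big\|\tfrac{h}{N}\textstyle\sum_k\tilde\Phi[mn_0,k+1]\Lambda[k]\Big\|_F.
\]
To control the Frobenius term I would use that each $\tilde\Phi[mn_0,k+1]$ is stochastic (so $\|\tilde\Phi\|_F\le\sqrt N$) together with $\|\Lambda[k]\|_F\le N\kappa_1\D({\mathcal B}[k])\sqrt N R_V^d$ from Lemma~\ref{L4.2} and \eqref{C-14-0}, and then replace $\D({\mathcal B}[k])$ by its decay bound from Proposition~\ref{P4.1}, uniform over the window up to the factor $(1-D_1\zeta(x^{\infty})^{\gamma_g})^{m-1}$. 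This produces a recursion $\D(V[mn_0])\le(1-D_2\phi^{\gamma_g})\D(V[(m-1)n_0])+C(1-D_1\zeta^{\gamma_g})^{m-1}$, which I would solve by dividing by $(1-D_2\phi^{\gamma_g})^m$ and summing the resulting geometric series, bounding each summand by $\max\{1-D_1\zeta^{\gamma_g},1-D_2\phi^{\gamma_g}\}^{m-1}$, precisely as in the passage from \eqref{C-22} to \eqref{C-23}. Finally I would pass from the grid times $mn_0$ to arbitrary $t$ by summing the one-step inequality of Lemma~\ref{L4.3} over the residual steps and invoking the monotonicity of $\D({\mathcal B})$ from Lemma~\ref{L4.1}.

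The step I expect to be most delicate is the bookkeeping in the perturbation estimate: one must confirm that bounding the window-summed coldness diameters by a single geometric factor $(1-D_1\zeta(x^{\infty})^{\gamma_g})^{m-1}$ is legitimate, that the factor $hn_0$ enters with the correct power, and that the interior partial products remain stochastic rather than merely the full window product. The smallness condition on $h$ is used in two distinct places, ensuring nonnegativity of $M[t]$ and controlling $R_V^d$ through Lemma~\ref{L4.2}, and keeping these consistent is the principal source of technical care.
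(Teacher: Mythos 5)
Your proposal is correct and follows essentially the same route as the paper's proof in Appendix~\ref{App-C}: the same one-step matrices $M[t]=I-\frac{h}{N}\Gamma[t]\tilde L[t]$, the same binomial lower bound extracting the $\gamma_g$-th term to get $\tilde\Phi[mn_0,(m-1)n_0]\ge D_2(\tilde A^{\infty})^{\gamma_g}$, the same application of Lemma~\ref{L2.2} with the stochastic partial products controlling the Frobenius term, the same recursion solved by dividing by $(1-D_2\phi(x^{\infty})^{\gamma_g})^m$, and the same passage from grid times $mn_0$ to general $t$ via Lemma~\ref{L4.3}. The delicate points you flag (the factor $(1-D_1\zeta(x^{\infty})^{\gamma_g})^{m-1}$ being uniform over the window since $\lfloor k/n_0\rfloor=m-1$ there, and stochasticity of the interior partial products) are handled exactly as you describe.
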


\begin{proof} We leave its proof in Appendix C.
\end{proof}

\subsection{Emergence of mono-cluster flocking}
In this subsection, we derive a mono-cluster flocking estimate by verifying the a proiri assumption \eqref{D-5} by imposing some conditions on system parameters and initial data. More precisely, our second main result can be summarized as follows. We set
\begin{align*}
\begin{aligned}
D_1 &=D_1(n_0):=\binom{n_0}{\gamma_g}(1-h\kappa_2(\beta^{in}_U)^2)^{n_0-\gamma_g}\Big(\frac{h (\beta^{in}_L)^2}{N}\Big)^{\gamma_g}, \\
D_2 &=D_2(n_0):=\binom{n_0}{\gamma_g}(1-h\kappa_1\beta^{in}_U)^{n_0-\gamma_g}\Big(\frac{h\beta^{in}_L}{N}\Big)^{\gamma_g}, \\
R_V^d &=R_V^d(\alpha,n_0):=\frac{1}{\beta^{in}_L}R_u[0]\exp\bigg[\frac{h n_0\kappa_2\beta^{in}_U\D({\mathcal B}[0])}{D_1\zeta(x^{\infty})^{\gamma_g}}\bigg].
\end{aligned}
\end{align*}
\begin{theorem}\label{T4.1}
Let a real number $\alpha>0$ and an integer $n_0\geq\gamma_g$ be given, and suppose that the time-step and initial data satisfy
\begin{align}
\begin{aligned} \label{D-30}
& 0<h\leq\min \Big \{\frac{1}{\kappa_2 (\beta^{in}_U)^2},\frac{\beta^{in}_L}{2\kappa_1 (\beta^{in}_U)^2} \Big \}, \\
& \D(X[0])+\frac{hn_0\D(V[0])}{D_2 \phi(x^{\infty})^{\gamma_g}}+\frac{\sqrt{2}h^2n_0^2N\kappa_1R_V^d\D({\mathcal B}[0])}{\big[\min\{D_1\zeta(x^{\infty})^{\gamma_g},D_2 \phi(x^{\infty})^{\gamma_g}\}\big]^2} \\
& \hspace{5cm} +\frac{2h^2n_0^2\kappa_1R_V^d\D({\mathcal B}[0]) }{D_1\zeta(x^{\infty})^{\gamma_g}}\leq x^{\infty},
\end{aligned}
\end{align}
and let $\{ (x_i,v_i,\beta_i)\}$ be a solution to system~\eqref{B-11}.  Then we have
\begin{eqnarray*}
&& (i)~\sup_{t\in\bbn\cup\{0\}}\D(X[t])\leq x^{\infty}, \quad \D({\mathcal B}[t])\leq\left(1-D_1\zeta(x^{\infty})^{\gamma_g}\right)^{\lfloor \frac{t}{n_0}\rfloor}\D({\mathcal B}[0]), \\
&& (ii)~\D(V[t])\leq   \left(1-D_2 \phi(x^{\infty})^{\gamma_g}\right)^{\lfloor\frac{t}{n_0}\rfloor}\D(V[0])+2hn_0\kappa_1R_V^d\left(1-D_1\zeta(x^{\infty})^{\gamma_g}\right)^{\lfloor\frac{t}{n_0}\rfloor}\D({\mathcal B}[0])\\
&&\hspace{1cm}+\sqrt{2}hn_0N\kappa_1R_V^d\D({\mathcal B}[0])\Big\lfloor\frac{t}{n_0}\Big\rfloor\Big[\max\{1-D_1\zeta(x^{\infty})^{\gamma_g},1-D_2 \phi(x^{\infty})^{\gamma_g}\}\Big]^{\lfloor\frac{t}{n_0}\rfloor-1}.
\end{eqnarray*}
\end{theorem}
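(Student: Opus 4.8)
The plan is to mirror the continuity argument used in Theorem \ref{T3.1}, but with the time-continuity replaced by an induction on the discrete time variable and the integral estimates replaced by discrete summations. The key observation is that assertion $(ii)$ is an immediate consequence of $(i)$ together with Proposition \ref{P4.2}: once we know the global spatial bound $\sup_{t}\D(X[t])\leq x^\infty$, the velocity-diameter estimate is exactly what Proposition \ref{P4.2} delivers. Hence the entire burden of the proof is to establish $(i)$, namely the a priori bound \eqref{D-5}, from the smallness condition \eqref{D-30}. To this end I would introduce the set
\[
S:=\Big\{T\in\bbn\cup\{0\}:\ \D(X[t])\leq x^{\infty}\ \text{for all }0\leq t\leq T\Big\},
\]
note that $0\in S$ because all the terms added to $\D(X[0])$ in \eqref{D-30} are nonnegative, and then argue by induction that $T\in S$ implies $T+1\in S$, so that $S=\bbn\cup\{0\}$.

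For the inductive step I would assume $\D(X[s])\leq x^{\infty}$ for $0\leq s\leq T$. Under this truncated bound the monotonicity of $\phi,\zeta$ gives $\phi(\|x_i[s]-x_j[s]\|)\geq\phi(x^{\infty})$ and $\zeta(\|x_i[s]-x_j[s]\|)\geq\zeta(x^{\infty})$ for $s\leq T$, which is precisely the lower bound on the communication weights that drives the ergodicity-coefficient estimates \eqref{D-9} and its velocity analogue; consequently the decay estimates of Propositions \ref{P4.1} and \ref{P4.2} remain valid on the finite horizon $0\leq t\leq T$. Iterating the position equation $\eqref{B-11}_1$ yields $x_i[T+1]-x_j[T+1]=(x_i[0]-x_j[0])+h\sum_{s=0}^{T}(v_i[s]-v_j[s])$, whence
\[
\D(X[T+1])\leq\D(X[0])+h\sum_{s=0}^{T}\D(V[s])\leq\D(X[0])+h\sum_{s=0}^{\infty}\D(V[s]),
\]
the last step using nonnegativity of the summands. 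Substituting the three-term bound from Proposition \ref{P4.2} and evaluating the resulting series is the heart of the calculation. Here the block structure of $\lfloor s/n_0\rfloor$ is what makes the sums telescope into the constants appearing in \eqref{D-30}: since each integer value $n$ is attained for exactly $n_0$ consecutive indices $s$, one has
\[
\sum_{s=0}^{\infty}(1-a)^{\lfloor s/n_0\rfloor}=\frac{n_0}{a},\qquad
\sum_{s=0}^{\infty}\Big\lfloor\tfrac{s}{n_0}\Big\rfloor\,\rho^{\lfloor s/n_0\rfloor-1}=\frac{n_0}{(1-\rho)^2},
\]
and applying these with $a=D_2\phi(x^{\infty})^{\gamma_g}$, $a=D_1\zeta(x^{\infty})^{\gamma_g}$, and $\rho=\max\{1-D_1\zeta(x^{\infty})^{\gamma_g},1-D_2\phi(x^{\infty})^{\gamma_g}\}$ (so that $1-\rho=\min\{D_1\zeta(x^{\infty})^{\gamma_g},D_2\phi(x^{\infty})^{\gamma_g}\}$) reproduces term by term the right-hand side of \eqref{D-30}. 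Thus $\D(X[T+1])\leq x^{\infty}$, closing the induction.

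The step I expect to be the main obstacle is the bookkeeping that justifies invoking Propositions \ref{P4.1} and \ref{P4.2} during the inductive step: those propositions are stated under the \emph{global} a priori condition \eqref{D-5}, whereas in the induction we only control $\D(X[s])$ for $s\leq T$. I would handle this by observing that their proofs use the spatial bound only to lower-bound $\phi,\zeta$ at each individual time, so the decay bounds hold verbatim on any finite horizon on which \eqref{D-5} is known; strictly, one re-reads the proof of Proposition \ref{P4.2} with the sup in \eqref{D-5} replaced by the maximum over $\{0,\dots,T\}$. The remaining delicate point is purely arithmetic, namely matching the discrete geometric and arithmetico-geometric sums above against the four summands of \eqref{D-30} and verifying that the factors of $h$, $n_0$, $\sqrt{2}$, and $N$ line up correctly; this is routine but must be carried out carefully because an error in the exponents of $h$ or $n_0$ would break the closure of the continuity argument.
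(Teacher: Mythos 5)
Your proposal is correct and follows essentially the same route as the paper: an induction on the discrete time index, the telescoped bound $\D(X[T+1])\leq\D(X[0])+h\sum_{s=0}^{T}\D(V[s])$, substitution of the three-term decay estimate from Proposition \ref{P4.2}, and evaluation of the resulting block-geometric and arithmetico-geometric series, which reproduces the left-hand side of \eqref{D-30} term by term exactly as in the paper. Your remark that Propositions \ref{P4.1} and \ref{P4.2} must be read on the finite horizon $\{0,\dots,T\}$ during the inductive step is a point the paper passes over silently, and your justification of it is sound.
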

\begin{proof} We claim:
\[ \D(X[t])\leq x^{\infty} \quad \mbox{for}~ t\in\bbn\cup\{0\}. \]
 We will prove this by induction on $t$. \newline
 
\noindent $\bullet$~Initial step: For the case $t=0$, it is clear from \eqref{D-30}.  \newline

\noindent $\bullet$~Induction step: Suppose that the claim holds for $0\leq t\leq m$. Then, we have
\begin{align*}
\begin{aligned}
&\|x_i[m+1]-x_j[m+1]\| \\
& \hspace{1cm} \leq\|x_i[0]-x_j[0]\|+h\sum_{n=0}^{m} \|v_i[n]-v_j[n]\|\\
&\hspace{1cm}\leq \D(X[0])+h\sum_{n=0}^{m}\D(V[n])\\
&\hspace{1cm}\leq\D(X[0])+h\D(V[0])\sum_{n=0}^{\infty}\left(1-D_2 \phi(x^{\infty})^{\gamma_g}\right)^{\lfloor \frac{n}{n_0}\rfloor}\\
&\hspace{1cm}+2h^2n_0\kappa_1R_V^d\D({\mathcal B}[0])\sum_{n=0}^{\infty}\left(1-D_1\zeta(x^{\infty})^{\gamma_g}\right)^{\lfloor\frac{n}{n_0}\rfloor}\\
&\hspace{1cm}+\sqrt{2}h^2n_0N\kappa_1R_V^d\D({\mathcal B}[0])\sum_{n=0}^{\infty}\left\lfloor \frac{n}{n_0}\right\rfloor\left[\max\{1-D_1\zeta(x^{\infty})^{\gamma_g},1-D_2 \phi(x^{\infty})^{\gamma_g}\}\right]^{\lfloor \frac{n}{n_0}\rfloor-1}\\
&\hspace{1cm}=\D(X[0])+hn_0\D(V[0])\sum_{n=0}^{\infty}\left(1-D_2 \phi(x^{\infty})^{\gamma_g}\right)^{n}\\
&\hspace{1cm}+2h^2n_0^2\kappa_1R_V^d\D({\mathcal B}[0])\sum_{n=0}^{\infty}\left(1-D_1\zeta(x^{\infty})^{\gamma_g}\right)^{n}\\
&\hspace{1cm}+\sqrt{2}h^2n_0^2N\kappa_1R_V^d\D({\mathcal B}[0])\sum_{n=1}^{\infty}n\left[\max\{1-D_1\zeta(x^{\infty})^{\gamma_g},1-D_2 \phi(x^{\infty})^{\gamma_g}\}\right]^{n-1}\\
&\hspace{1cm}=\D(X[0])+\frac{hn_0\D(V[0])}{D_2 \phi(x^{\infty})^{\gamma_g}}+\frac{\sqrt{2}h^2n_0^2N\kappa_1R_V^d\D({\mathcal B}[0])}{\big[\min\{D_1\zeta(x^{\infty})^{\gamma_g},D_2 \phi(x^{\infty})^{\gamma_g}\}\big]^2}+\frac{2h^2n_0^2\kappa_1R_V^d\D({\mathcal B}[0]) }{D_1\zeta(x^{\infty})^{\gamma_g}}\\
&\hspace{1cm} \leq x^{\infty}.
\end{aligned}
\end{align*}
Therefore, the claim holds for $t=m+1$, and the induction is complete. So a priori condition $(i)$ does hold, and the alignment estimates $(ii)$ and $(iii)$ follow from Proposition \ref{P4.1} and Proposition \ref{P4.2} respectively. 
\end{proof}
%\begin{remark}\label{R4.1}
%The flocking condition \eqref{D-30} can be interpreted as follows: We first fix 
%\[ x^{\infty} >0,  \quad n_0\geq\gamma_g, \quad \mbox{and} \quad   0<h\leq\min \Big \{\frac{1}{\kappa_2 (\beta^{in}_U)^2},\frac{(\beta^{in}_L)}{2\kappa_1 (\beta^{in}_U)^2} \Big \}. \]
% Let $(\bar x,\bar v,\bar \beta)\in\bbr^d\times \bbr^d \times \bbr^+$ be given. Then the left-hand side of \eqref{D-30} approaches zero as
%\[
%(x_1^{in},\cdots,x_N^{in},v_1^{in},\cdots, v_N^{in},\varphi_1^{in},\cdots,\beta_N^{in})\to (\bar x,\cdots,\bar x,\bar v,\cdots, \bar v,\bar \beta,\cdots,\bar \beta).
%\]
%(Here, the limit is taken in $\bbr^{d\times N}\times \bbr^{d\times N} \times \bbr^{ N}$.) Hence, if $\D(X[0])$, $\D(V[0])$, and $\D({\mathcal B}[0])$ are sufficiently close to zero, then the condition \eqref{D-30} is satisfied.
%\end{remark}
\begin{remark}\label{R4.2}
Fix a real number $\delta>0$ and take $n_0=\lfloor \frac{\delta}{h}\rfloor$. Then as $h\to0$, the left-hand side of \eqref{D-30} approaches that of \eqref{C-30}.
\end{remark}

%%%%%%%%%%%%%%%%%%%%%%%%%%%%%%%%%%%%%%%%%%%%%%%%%%%%%%%%%%%%%%%%%%%%%%%%%%%%%%%%%%
\section{Conclusion} \label{sec:5}
\setcounter{equation}{0}
In this paper, we presented a mono-cluster flocking estimate for a thermodynamic Cucker-Smale model. As aforementioned in Introduction, most flocking models in literature deal with mechanical models, i.e., position and momentum are macroscopic observables. Thus, internal structures of particles, e.g., spin, temperature, vibration, etc, are often ignored in the modeling. Recently, Ha and Ruggeri introduced thermodynamic particle models which are consistent with thermodynamics and the Cucker-Smale model for the isothermal case. They derived the generalized Cucker-Smale model with temperature from the gas mixture models. Thus, it inherits the entropy principle as gas mixture system does. In a previous series of works on the emergent dynamics on the TCS model, most flocking analysis has been done mostly for the complete networks. Thus, interaction between network structure and system dynamics are completely decoupled. In this work, we presented exponential flocking estimates for the continuous and discrete TCS model with small diffusion velocities. Our proposed frameworks are formulated in terms of system parameters and initial data. Of course, there are many issues which have not been addressed in this paper. For example, we have only dealt with mono-cluster flocking. However, as noticed in the Cucker-Smale model, depending on the intial data and nature of communication weight (short range or long range), we might have multi-cluster flockings. Moreover, we do not have a detailed information on the spatial structure of resulting asymptotic flocking states. We leave these interesting issues for a future work.

\appendix

\newpage

\section{Proof of Lemma \ref{L4.2}} \label{App-A}
\setcounter{equation}{0}
For the proof, it suffices to show the upper bound of $R_u[t]$:
\[
R_u[t]\leq R_u[0]\exp\bigg[\frac{h n_0\kappa_2 \beta^{in}_U\D({\mathcal B}[0])}{D_1\zeta(x^{\infty})^{\gamma_g}}\bigg],\quad t\in\bbn\cup\{0\}.
\]
For each $t\in\bbn\cup\{0\}$, we choose an extremal index $1\leq M_t\leq N$ satisfying the relation:
\[ \|u_{M_t}[t]\|=R_u[t]. \]
For each $i=1,\cdots,N$, we have
\begin{align}\label{D-11}
\begin{aligned}
&\|u_i[t+1]\|^2-\|u_i[t]\|^2=\beta_i[t+1]^2\|v_i[t+1]\|^2-\beta_i[t]^2\|v_i[t]\|^2\\
&\hspace{1cm}=\|v_i[t]\|^2\big(\beta_i[t+1]^2-\beta_i[t]^2\big) + \beta_i[t+1]^2\big(\|v_i[t+1]\|^2-\|v_i[t]\|^2\big)\\
&\hspace{1cm}=:\mathcal I_{21} +\mathcal I_{22},\quad t\in\bbn\cup\{0\}.
\end{aligned}
\end{align}
Below, we estimate the terms ${\mathcal I}_{2i},~i=1,2$ one by one. \newline

\noindent $\bullet$ (Estimate of $\mathcal{I}_{21}$) : By direct calculation, we have
\begin{align}\label{D-12}
\begin{aligned}
\mathcal I_{21} &=\|v_i[t]\|^2\big(\beta_i[t+1]+\beta_i[t]\big)\big(\beta_i[t+1]-\beta_i[t]\big)\\
&=\frac{h}{N}\|v_i[t]\|^2\big(\beta_i[t+1]+\beta_i[t]\big)\beta_i[t]\beta_i[t+1]\sum_{j=1}^N \chi_{ij}\zeta(\|x_i[t]-x_j[t]\|)\Big(\beta_j[t]-\beta_i[t]\Big)\\
&=\frac{h}{N}\bigg(1+\frac{\beta_i[t+1]}{\beta_i[t]}\bigg)\beta_i[t+1]\|u_i[t]\|^2\sum_{j=1}^N \chi_{ij}\zeta(\|x_i[t]-x_j[t]\|)\Big(\beta_j[t]-\beta_i[t]\Big)\\
&\leq  \bigg(1+\frac{\beta_i[t+1]}{\beta_i[t]}\bigg)h\kappa_2\beta_U^{in}\D(\mathcal B[t])\|u_i[t]\|^2\\
&=  \bigg(2+\frac{h}{N}\beta_i[t+1]\sum_{j=1}^N \chi_{ij}\zeta(\|x_i[t]-x_j[t]\|)\big(\beta_j[t]-\beta_i[t]\big)\bigg)h\kappa_2\beta^{in}_U\D({\mathcal B}[t])\|u_i[t]\|^2\\
&\leq  \bigg(2+h \kappa_2 \beta^{in}_U \D\big({\mathcal B}[t]\big)\bigg)h\kappa_2 \beta^{in}_U \D({\mathcal B}[t])\|u_i[t]\|^2\\
\end{aligned}
\end{align}

\noindent $\bullet$ (Estimate of $\mathcal{I}_{22}$): We set 
\[
P:=\frac{h}{N}\sum_{j=1}^N \chi_{ij}\phi(\|x_i[t]-x_j[t]\|)\quad \mbox{and}\quad Q:=\frac{h}{N}\sum_{j=1}^N \chi_{ij}\phi(\|x_i[t]-x_j[t]\|)\|u_j[t]\|.
\]
Then, we use the estimate 
\begin{equation}\label{D-12-1}
P\leq h \kappa_1\leq \frac{1}{\beta^{in}_U}\leq\frac{1}{\beta_i[t]}
\end{equation}
to obtain
\begin{align*}
\begin{aligned}
\mathcal I_{22} &=\beta_i[t+1]^2\bigg(\Big\|v_i[t]+\frac{h}{N}\sum_{j=1}^N \chi_{ij}\phi(\|x_i[t]-x_j[t]\|)\Big(\beta_j[t]v_j[t]-\beta_i[t]v_i[t]\Big)\Big\|^2-\|v_i[t]\|^2\bigg)\\
&=\beta_i[t+1]^2\bigg(2\Big\langle v_i[t],\frac{h}{N}\sum_{j=1}^N \chi_{ij}\phi(\|x_i[t]-x_j[t]\|)\Big(\beta_j[t]v_j[t]-\beta_i[t]v_i[t]\Big)\Big\rangle\\
&+\Big\|\frac{h}{N}\sum_{j=1}^N \chi_{ij}\phi(\|x_i[t]-x_j[t]\|)\Big(\beta_j[t]v_j[t]-\beta_i[t]v_i[t]\Big)\Big\|^2\bigg)\\
&=\beta_i[t+1]^2\bigg(2\Big\langle \frac{u_i[t]}{\beta_i[t]},\frac{h}{N}\sum_{j=1}^N \chi_{ij}\phi(\|x_i[t]-x_j[t]\|)u_j[t]-Pu_i[t]\Big\rangle\\
&+\Big\|\frac{h}{N}\sum_{j=1}^N \chi_{ij}\phi(\|x_i[t]-x_j[t]\|)u_j[t]-Pu_i[t]\Big\|^2\bigg)\\
&=\beta_i[t+1]^2\bigg(\Big(P^2-\frac{2P}{\beta_i[t]}\Big)\|u_i[t]\|^2+\Big\|\frac{h}{N}\sum_{j=1}^N \chi_{ij}\phi(\|x_i[t]-x_j[t]\|)u_j[t]\Big\|^2\\
&+\Big(\frac{1}{\beta_i[t]}-P\Big)\frac{2h}{N}\sum_{j=1}^N \chi_{ij}\phi(\|x_i[t]-x_j[t]\|)\big\langle u_i[t],u_j[t]\big\rangle\bigg)\\
&\leq\beta_i[t+1]^2\bigg(\Big(P^2-\frac{2P}{\beta_i[t]}\Big)\|u_i[t]\|^2+\Big(\frac{h}{N}\sum_{j=1}^N \chi_{ij}\phi(\|x_i[t]-x_j[t]\|)\|u_j[t]\|\Big)^2\\
&+\Big(\frac{1}{\beta_i[t]}-P\Big)\frac{2h}{N}\sum_{j=1}^N \chi_{ij}\phi(\|x_i[t]-x_j[t]\|)\| u_i[t]\|\|u_j[t]\|\bigg)\\
&=\beta_i[t+1]^2\bigg(Q-P\|u_i[t]\|\bigg)\bigg(Q-\Big(P-\frac{2}{\beta_i[t]}\Big)\|u_i[t]\|\bigg)\\
&=: {\mathcal F}_1(Q).
\end{aligned}
\end{align*}
Note that ${\mathcal F}_1$ is a convex function in $Q$, and we have 
\[ 0\leq Q\leq P\|u_{M_t}[t]\|. \]
Thus, we have
\[
{\mathcal F}_1(Q)\leq\max\{{\mathcal F}_1(0),{\mathcal F}_1(P\|u_{M_t}[t]\|)\}.
\]
By \eqref{D-12-1}, we have
\[ {\mathcal F}_1(0)\leq 0\leq {\mathcal F}_1(P\|u_{M_t}[t]\|). \]
Hence, we have
\begin{align}\label{D-13}
\begin{aligned}
\mathcal I_{22} &\leq \beta_i[t+1]^2\bigg(P\|u_{M_t}[t]\|-P\|u_i[t]\|\bigg)\bigg(P\|u_{M_t}[t]\|-\Big(P-\frac{2}{\beta_i[t]}\Big)\|u_i[t]\|\bigg)\\
&= P\beta_i[t+1]^2\big(\|u_{M_t}[t]\|-\|u_i[t]\|\big)\bigg(\big(\|u_{M_t}[t]\|-\|u_i[t]\|\big)P+\frac{2}{\beta_i[t]}\|u_i[t]\|\bigg)\\
&\leq h\kappa_1 (\beta^{in}_U)^2\big(\|u_{M_t}[t]\|-\|u_i[t]\|\big)\bigg(\big(\|u_{M_t}[t]\|-\|u_i[t]\|\big)h\kappa_1+\frac{2}{(\beta^{in}_L)}\|u_i[t]\|\bigg)\\
&\leq h\kappa_1 (\beta^{in}_U)^2\big(\|u_{M_t}[t]\|-\|u_i[t]\|\big)\bigg(h\kappa_1\|u_{M_t}[t]\|
+\frac{2}{\beta^{in}_L}\|u_i[t]\|\bigg).
\end{aligned}
\end{align}
We combine \eqref{D-11}, \eqref{D-12}, and \eqref{D-13}  to obtain 
\begin{align}\label{D-13-0-1}
\begin{aligned}
&\|u_i[t+1]\|^2-\|u_i[t]\|^2 \leq \bigg(2+h \kappa_2 \beta^{in}_U\D\big({\mathcal B}[t]\big)\bigg)h\kappa_2 \beta^{in}_U \D({\mathcal B}[t])\|u_i[t]\|^2\\
&\hspace{1cm}+h\kappa_1 (\beta^{in}_U)^2\big(\|u_{M_t}[t]\|-\|u_i[t]\|\big)\bigg(h\kappa_1\|u_{M_t}[t]\|+\frac{2}{\beta^{in}_L}\|u_i[t]\|\bigg).
\end{aligned}
\end{align}
Now, we take $i=M_{t+1}$ in \eqref{D-13-0-1} to get
\begin{align*}
\begin{aligned}
&\|u_{M_{t+1}}[t+1]\|^2-\|u_{M_t}[t]\|^2 \\
&\hspace{1cm} =\|u_{M_{t+1}}[t]\|^2-\|u_{M_t}[t]\|^2+\|u_{M_{t+1}}[t+1]\|^2-\|u_{M_{t+1}}[t]\|^2\\
&\hspace{1cm}\leq\|u_{M_{t+1}}[t]\|^2-\|u_{M_t}[t]\|^2+\bigg(2+h \kappa_2\beta_U^{in}\D\big(\mathcal B[t]\big)\bigg)h\kappa_2 \beta^{in}_U\D(\mathcal B[t])\|u_{M_{t+1}}[t]\|^2\\
&\hspace{1cm}+h\kappa_1(\beta^{in}_U)^2\big(\|u_{M_t}[t]\|-\|u_{M_{t+1}}[t]\|\big)\bigg(h\kappa_1\|u_{M_t}[t]\|+\frac{2}{\beta^{in}_L}\|u_{M_{t+1}}[t]\|\bigg)\\
&\hspace{1cm}=\bigg(2+h \kappa_2\beta^{in}_U\D\big({\mathcal B}[t]\big)\bigg)h\kappa_2 \beta^{in}_U\D({\mathcal B}[t])\|u_{M_{t+1}}[t]\|^2\\
&\hspace{1cm}-\big(\|u_{M_t}[t]\|-\|u_{M_{t+1}}[t]\|\big)\bigg(\big(1-(h\kappa_1\beta^{in}_U)^2\big)\|u_{M_t}[t]\|+\big(1-\frac{2h\kappa_1 (\beta^{in}_U)^2}{\beta^{in}_L}\big)\|u_{M_{t+1}}[t]\|\bigg)\\
&\hspace{1cm}\leq \bigg(2+h \kappa_2\beta^{in}_U\D\big({\mathcal B}[t]\big)\bigg)h\kappa_2\beta^{in}_U\D({\mathcal B}[t])\|u_{M_{t+1}}[t]\|^2\\
&\hspace{1cm}\leq \bigg(2+h \kappa_2\beta^{in}_U\D\big({\mathcal B}[t]\big)\bigg)h\kappa_2\beta^{in}_U\D({\mathcal B}[t])
\|u_{M_t}[t]\|^2.
\end{aligned}
\end{align*}
This yields
\[
\|u_{M_{t+1}}[t+1]\|^2\leq\Big(1+h \kappa_2\beta^{in}_U\D\big({\mathcal B}[t]\big)\Big)^2\|u_{M_t}[t]\|^2.
\]
Hence, we have
\[
R_u[t+1]\leq\Big(1+h \kappa_2\beta^{in}_U\D\big({\mathcal B}[t]\big)\Big)R_u[t],\quad t\in\bbn\cup\{0\}.
\]
Now, we apply Proposition \ref{P4.1} to obtain 
\begin{align*}
\begin{aligned}
R_u[t]&\leq R_u[0]\prod_{n=0}^{t-1}\Big(1+h \kappa_2\beta^{in}_U\D\big({\mathcal B}[n]\big)\Big)\\
&=R_u[0]\exp\bigg[\sum_{n=0}^{t-1}\log\Big(1+h \kappa_2\beta^{in}_U\D\big({\mathcal B}[n]\big)\Big)\bigg]\\
&\leq R_u[0]\exp\bigg[\sum_{n=0}^{t-1}\Big(h \kappa_2\beta^{in}_U\D\big({\mathcal B}[n]\big)\Big)\bigg]\\
&\leq R_u[0]\exp\bigg[h \kappa_2\beta^{in}_U\D({\mathcal B}[0])\sum_{n=0}^{\infty}\left(1-D_1\zeta(x^{\infty})^{\gamma_g}\right)^{\lfloor \frac{n}{n_0}\rfloor}\bigg]\\
&= R_u[0]\exp\bigg[h \kappa_2\beta^{in}_U\D({\mathcal B}[0])\sum_{n=0}^{\infty}\sum_{k=nn_0}^{(n+1)n_0-1}\left(1-D_1\zeta(x^{\infty})^{\gamma_g}\right)^{\lfloor \frac{k}{n_0}\rfloor}\bigg]\\
&= R_u[0]\exp\bigg[h \kappa_2\beta^{in}_U\D({\mathcal B}[0])\sum_{n=0}^{\infty}n_0\left(1-D_1\zeta(x^{\infty})^{\gamma_g}\right)^{n}\bigg]\\
&= R_u[0]\exp\bigg[\frac{h n_0\kappa_2\beta^{in}_U\D({\mathcal B}[0])}{D_1\zeta(x^{\infty})^{\gamma_g}}\bigg].
\end{aligned}
\end{align*}
This completes the proof of Lemma \ref{L4.2}.

\section{Proof of Lemma \ref{L4.3}} \label{App-B}
\setcounter{equation}{0}

For each $t\in\bbn\cup\{0\}$, we choose extremal indices $1\leq i_t,~ j_t\leq N$ satisfying the relation:
\[ \D(V[t])=\big\|v_{i_t}[t]-v_{j_t}[t]\big\|.\] 
For each $i=1,\cdots,N$, we have
\begin{align}
\begin{aligned}\label{D-13-1}
&\|v_i[t+1]-v_j[t+1]\|^2-\|v_i[t]-v_j[t]\|^2 \\
&\hspace{0.2cm}=\bigg\|v_i[t]+\frac{h}{N}\sum_{k=1}^N \chi_{ik}\phi_{ik}[t]\Big(\beta_k[t]v_k[t]-\beta_i[t]v_i[t]\Big)\\
&\hspace{0.2cm}-v_j[t]-\frac{h}{N}\sum_{k=1}^N \chi_{jk}\phi_{jk}[t]\Big(\beta_k[t]v_k[t]-\beta_j[t]v_j[t]\Big)\bigg\|^2-\|v_i[t]-v_j[t]\|^2 \\
&\hspace{0.2cm}=\|(1-P)(v_i[t]-v_j[t])+X+Y\|^2-\|v_i[t]-v_j[t]\|^2,
%& \hspace{0.5cm} =2\left\langle v_i[t]-v_j[t],\frac{h}{N}\sum_{k=1}^N \chi_{ik}\phi_{ik}[t](\varphi_k[t]v_k[t]-\varphi_i[t]v_i[t])\right\rangle\\
%&\hspace{0.5cm}+2\left\langle v_j[t]-v_i[t],\frac{h}{N}\sum_{k=1}^N \chi_{jk}\phi_{jk}[t](\varphi_k[t]v_k[t]-\varphi_j[t]v_j[t])\right\rangle\\
%&\hspace{0.5cm}+\bigg\|\frac{h}{N}\sum_{k=1}^N \chi_{ik}\phi_{ik}[t]\Big(\varphi_k[t]v_k[t]-\varphi_i[t]v_i[t]\Big)-\frac{h}{N}\sum_{k=1}^N \chi_{jk}\phi_{jk}[t]\Big(\varphi_k[t]v_k[t]-\varphi_j[t]v_j[t]\Big)\bigg\|^2\\
%& \hspace{0.5cm} =:\mathcal{I}_{1}+\mathcal{I}_{2}.
\end{aligned}
\end{align}
where the quantities $P, X$ and $Y$ are defined as follows:
\begin{align*}
\begin{aligned}
& P:=\frac{h}{N}\sum_{k=1}^N \chi_{ik}\phi_{ik}[t]\beta_i[t]+\frac{h}{N}\sum_{k=1}^N \chi_{jk}\phi_{jk}[t]\beta_j[t], \\
& X:=\frac{h}{N}\sum_{k=1}^N \chi_{ik}\phi_{ik}[t]\Big(\beta_i[t]v_k[t]-\beta_i[t]v_j[t]\Big)-\frac{h}{N}\sum_{k=1}^N \chi_{jk}\phi_{jk}[t]\Big(\beta_j[t]v_k[t]-\beta_j[t]v_i[t]\Big), \\
& Y:=\frac{h}{N}\sum_{k=1}^N \chi_{ik}\phi_{ik}[t]\Big(\beta_k[t]v_k[t]-\beta_i[t]v_k[t]\Big)-\frac{h}{N}\sum_{k=1}^N \chi_{jk}\phi_{jk}[t]\Big(\beta_k[t]v_k[t]-\beta_j[t]v_k[t]\Big).
\end{aligned}
\end{align*}
We write $\phi_{ij}[t]:=\phi(\|x_i[t]-x_j[t]\|)$, $i,j=1,2,\cdots,N$ for convenience.\\
Next, we rewrite \eqref{D-13-1} as follows:
\begin{align}
\begin{aligned} \label{D-13-1-1}
&\|v_i[t+1]-v_j[t+1]\|^2-\|v_i[t]-v_j[t]\|^2 \\
& \hspace{1cm} =\Big(\|(1-P)(v_i[t]-v_j[t])+X\|^2-\|v_i[t]-v_j[t]\|^2\Big)\\
&\hspace{1cm}+\Big(\|(1-P)(v_i[t]-v_j[t])+X+Y\|^2-\|(1-P)(v_i[t]-v_j[t])+X\|^2\Big) \\
& \hspace{1cm} =:\mathcal I_{31} +\mathcal I_{32}.
\end{aligned}
\end{align}
Below, we estimate the terms ${\mathcal I}_{3i},~i=1,2$ one by one. \newline

\noindent $\bullet$ (Estimate of $\mathcal{I}_{31}$) :
Note that
\begin{equation}\label{D-13-2}
P\leq 2h\kappa_1\beta^{in}_U\leq 1.
\end{equation}
So we have
\begin{align*}
\begin{aligned}
\mathcal I_{31}&=(P^2-2P)\|v_i[t]-v_j[t]\|^2+2(1-P)\langle v_i[t]-v_j[t],X\rangle+\|X\|^2\\
&\leq(P^2-2P)\|v_i[t]-v_j[t]\|^2+2(1-P)\| v_i[t]-v_j[t]\|\|X\|+\|X\|^2\\
&=\Big(\|X\|-P\|v_i[t]-v_j[t]\|\Big)\Big(\|X\|+(2-P)\|v_i[t]-v_j[t]\|\Big)\\
&=: {\mathcal F}_2(\|X\|).
\end{aligned}
\end{align*}
Note that ${\mathcal F}_2$ is a convex function, and we have
\begin{align*}
\begin{aligned}
&0\leq\|X\|\leq\frac{h}{N}\sum_{k=1}^N \chi_{ik}\phi_{ik}[t]\beta_i[t]\big\|v_k[t]-v_j[t]\big\|+\frac{h}{N}\sum_{k=1}^N \chi_{jk}\phi_{jk}[t]\beta_j[t]\big\|v_k[t]-v_i[t]\big\|\\
&\hspace{1cm}\leq P\big\|v_{i_t}[t]-v_{j_t}[t]\big\|.
\end{aligned}
\end{align*}
Thus, we have
\[
{\mathcal F}_2(\|X\|)\leq\max \Big \{{\mathcal F}_2(0),~{\mathcal F}_2(P\big\|v_{i_t}[t]-v_{j_t}[t]\big\|) \Big \}.
\]
On the other hand, it follows from \eqref{D-13-2} that we have 
\[ {\mathcal F}_2(0)\leq 0\leq {\mathcal F}_2(P\big\|v_{i_t}[t]-v_{j_t}[t]\big\|). \]
Hence, we have
\begin{align}\label{D-13-3}
\begin{aligned}
\mathcal I_{31} &\leq {\mathcal F}_2(P\big\|v_{i_t}[t]-v_{j_t}[t]\big\|)\\
&=\Big(P\big\|v_{i_t}[t]-v_{j_t}[t]\big\|-P\big\|v_i[t]-v_j[t]\big\|\Big)\Big(P\big\|v_{i_t}[t]-v_{j_t}[t]\big\|+(2-P)\big\|v_i[t]-v_j[t]\big\|\Big)\\
&=\Big(\big\|v_{i_t}[t]-v_{j_t}[t]\big\|-\big\|v_i[t]-v_j[t]\big\|\Big)\Big(P^2\big\|v_{i_t}[t]-v_{j_t}[t]\big\|+(2P-P^2)\big\|v_i[t]-v_j[t]\big\|\Big)\\
&\leq\Big(\big\|v_{i_t}[t]-v_{j_t}[t]\big\|-\big\|v_i[t]-v_j[t]\big\|\Big)\Big(\big\|v_{i_t}[t]-v_{j_t}[t]\big\|+\big\|v_i[t]-v_j[t]\big\|\Big)\\
&=\big\|v_{i_t}[t]-v_{j_t}[t]\big\|^2-\big\|v_i[t]-v_j[t]\big\|^2\\
&=\D(V[t])^2-\big\|v_i[t]-v_j[t]\big\|^2.
\end{aligned}
\end{align}

\vspace{0.2cm}

\noindent $\bullet$ (Estimate of $\mathcal{I}_{32}$): In this case,  note that
\begin{align*}
\begin{aligned}
\big\|Y\big\|&\leq\frac{h}{N}\sum_{k=1}^N \chi_{ik}\phi_{ik}[t]\big|\beta_k[t]-\beta_i[t]\big|\big\|v_k[t]\big\|+\frac{h}{N}\sum_{k=1}^N \chi_{jk}\phi_{jk}[t]\big|\beta_k[t]-\beta_j[t]\big|\big\|v_k[t]\big\|\\
&\leq 2h\kappa_1 R_V^d\D({\mathcal B}[t])
\end{aligned}
\end{align*}
and
\[
\big\|(1-P)(v_i[t]-v_j[t])+X\big\|\leq \D(V[t]),
\]
by \eqref{D-13-3}. Hence, we have
\begin{align}\label{D-13-4}
\begin{aligned}
\mathcal I_{32}&=2\big\langle(1-P)(v_i[t]-v_j[t])+X,Y\big\rangle+\big\|Y\big\|^2\\
&\leq2\big\|(1-P)(v_i[t]-v_j[t])+X\big\|\big\|Y\big\|+\big\|Y\big\|^2\\
&\leq2\D(V[t])\cdot 2h\kappa_1 R_V^d\D({\mathcal B}[t])+\Big(2h\kappa_1 R_V^d\D({\mathcal B}[t])\Big)^2\\
&=\Big(\D(V[t])+ 2h\kappa_1 R_V^d\D({\mathcal B}[t])\Big)^2-\D(V[t])^2.
\end{aligned}
\end{align}
Now, it follows from \eqref{D-13-1-1}, \eqref{D-13-3} and \eqref{D-13-4}  that we have
\begin{align*}
\begin{aligned}
&\|v_i[t+1]-v_j[t+1]\|^2-\|v_i[t]-v_j[t]\|^2\\
&\hspace{1cm}\leq\Big(\D(V[t])+ 2h\kappa_1 R_V^d\D({\mathcal B}[t])\Big)^2-\big\|v_{i}[t]-v_{j}[t]\big\|^2,
\end{aligned}
\end{align*}
i.e.
\begin{equation}\label{D-13-5}
\|v_i[t+1]-v_j[t+1]\|^2\leq\Big(\D(V[t])+ 2h\kappa_1 R_V^d\D({\mathcal B}[t])\Big)^2.
\end{equation}
We take $(i,j)=(i_{t+1},j_{t+1})$ in \eqref{D-13-5} to obtain 
\[
\D(V[t+1])^2\leq\Big(\D(V[t])+ 2h\kappa_1 R_V^d\D({\mathcal B}[t])\Big)^2,
\]
i.e.
\[
\D(V[t+1])\leq\D(V[t])+ 2h\kappa_1 R_V^d\D({\mathcal B}[t]).
\]
This completes the proof of Lemma \ref{L4.3}.

\section{Proof of Proposition \ref{P4.2}} \label{App-C}
\setcounter{equation}{0}
%%%%%%%%%%%%%%
We can rewrite $\eqref{B-11}_2$ in a more concise form.
We define an $N\times N$ matrix $\tilde {L}[t]$ by
\[
\tilde L[t]:=\tilde D[t]-\tilde A[t],
\]
where the matrices $\tilde A[t]=(\tilde a_{ij}[t])$ and $\tilde D[t]=\mbox{diag}(\tilde d_1[t],\cdots,\tilde d_N[t])$ are defined by the following relations:
\[
\tilde a_{ij}[t]:=\chi_{ij}\phi(\|x_i[t]-x_j[t]\|)\quad\mbox{and}\quad \tilde d_i[t]=\sum_{j=1}^N \chi_{ij}\phi(\|x_i[t]-x_j[t]\|).
\]
Recall that
\[
 \Gamma[t]:=\mbox{diag}\big(\beta_1[t],\cdots,\beta_N[t]\big).
\]
Then we can rewrite $\eqref{B-11}_2$ as
\begin{equation}\label{D-14}
V[t+1]=\Big[I-\frac{h}{N}\Gamma[t]\tilde L[t]\Big]V[t]+\frac{h}{N}\Lambda[t],
\end{equation}
where the $N\times d$ matrix $\Lambda[t]:=(\lambda_i^k[t])_{1\leq i\leq N,1\leq k\leq d}$ is defined by
\[
\lambda_i^k[t]:=\sum_{j=1}^N\chi_{ij}\phi(\|x_i[t]-x_j[t]\|)(\beta_j[t]-\beta_i[t])v_j^k[t],~~ i=1,\cdots,N,~~k=1,\cdots, d,~~ t\in\bbn\cup\{0\}.
\]
Lastly, we define the $N\times N$ matrix $B[t]=(b_{ij}[t])$ by
\[
b_{ij}[t]=\chi_{ij}\phi(\|x_i[t]-x_j[t]\|)(\beta_j[t]-\beta_i[t]),\quad i,j=1,\cdots,N.
\]
Then we have $\Lambda[t]=B[t]V[t]$.\newline
%%%%%%%%%%%%%%

Note that
\[
-\frac{1}{N}\Gamma[t]\tilde L[t]=\frac{1}{N}\Gamma[t](\tilde A[t]-\tilde D[t])\geq\frac{\beta_L^{in}}{N} {\tilde A}^{\infty}-\kappa_1\beta^{in}_UI,
\]
where ${\tilde A}^{\infty}=({\tilde a}_{ij}^\infty)$ is a nonnegative matrix defined by
\[
{\tilde a}_{ij}^\infty:=\chi_{ij}\phi(x^{\infty}).
\]
Hence we have
\begin{equation}\label{D-16}
I-\frac{h}{N}\Gamma[t]\tilde L[t]\geq (1-h\kappa_1\beta^{in}_U)I+\frac{h\beta^{in}_L}{N} {\tilde A}^{\infty} \geq0,\quad t\in\bbn\cup\{0\}.
\end{equation}
For any $t_1,t_2\in\bbn\cup\{0\}$ with $t_2-t_1\geq\gamma_g$, define the matrix $\tilde\Phi[t_2,t_1]$ as follows:
\begin{align*}
\begin{aligned}
\tilde\Phi[t_2,t_1]&:=\Big(I-\frac{h}{N}\Gamma[t_2-1]\tilde L[t_2-1]\Big)\Big(I-\frac{h}{N}\Gamma[t_2-2]\tilde L[t_2-2]\Big)\cdots\Big(I-\frac{h}{N}\Gamma[t_1]\tilde L[t_1]\Big).
\end{aligned}
\end{align*}
It follows from \eqref{D-16} that
\begin{align} \label{D-18}
\begin{aligned}
\tilde\Phi\left[t_2,t_1\right]&\geq \Big((1-h\kappa_1\beta^{in}_U)I+\frac{h\beta^{in}_L}{N} {\tilde A}^{\infty}\Big)^{t_2-t_1}\\
&=\sum_{n=0}^{t_2-t_1} \binom{t_2-t_1}{n}(1-h\kappa_1\beta^{in}_U)^{t_2-t_1-n}\Big(\frac{h\beta^{in}_L}{N} {\tilde A}^{\infty}\Big)^{n}\\
&\geq\binom{t_2-t_1}{\gamma_g}(1-h\kappa_1 \beta^{in}_U)^{t_2-t_1-\gamma_g}\Big(\frac{h\beta^{in}_L}{N} {\tilde A}^{\infty}\Big)^{\gamma_g}.
\end{aligned}
\end{align}
Now, we  fix $m\in\bbn$ and put $t_1=(m-1)n_0$, $t_2=mn_0$ in \eqref{D-18} to obtain 
\[
\tilde\Phi\Big[mn_0,(m-1)n_0\Big]\geq \binom{n_0}{\gamma_g}(1-h\kappa_1\beta^{in}_U)^{n_0-\gamma_g}\Big(\frac{h\beta^{in}_L}{N}{\tilde A}^{\infty} \Big)^{\gamma_g}= D_2
({\tilde A}^{\infty})^{\gamma_g}\geq0.
\]
Then, we use Lemma \ref{L2.1} to derive
\begin{equation}\label{D-19}
\mu\Big(\tilde\Phi\Big[mn_0,(m-1)n_0\Big]\Big)\geq D_2\mu(({\tilde A}^{\infty})^{\gamma_g})\geq D_2 \phi(x^{\infty})^{\gamma_g}.
\end{equation}
Note that for each $(m-1)n_0\leq t< mn_0$:
\[
[1,\cdots,1]^\top=\Big(I-\frac{h}{N}\Gamma[t]\tilde L[t]\Big)[1,\cdots,1]^\top.
\]
Thus, the nonnegative matrix $\tilde \Phi\Big[mn_0,(m-1)n_0\Big]$ is actually stochastic. \newline

On the other hand, it follows from \eqref{D-14} that we have
\begin{align*}
\begin{aligned}
V\big[mn_0\big] &=\tilde\Phi\big[mn_0,(m-1)n_0\big]V[(m-1)n_0] \\
&+\sum_{n=0}^{n_0-1}\tilde\Phi\big[mn_0,(m-1)n_0+n+1\big]\Big(\frac{h}{N}\Lambda[(m-1)n_0+n]\Big),
\end{aligned}
\end{align*}
with the convention $\tilde\Phi\big[mn_0,mn_0\big]:=I$. \newline

Now, we use Lemma \ref{L2.2} and \eqref{D-19} to obtain 
\begin{align}\label{D-20}
\begin{aligned}
\D\left(V[mn_0]\right)&\leq\left(1-\mu\Big(\tilde\Phi[mn_0,(m-1)n_0]\Big)\right)\D(V[(m-1)n_0])\\
& \hspace{0.5cm} +\frac{\sqrt{2}h}{N}\bigg\|\sum_{n=0}^{n_0-1}\tilde\Phi\big[mn_0,(m-1)n_0+n+1\big]\Lambda[(m-1)n_0+n]\bigg\|_F\\
&\leq \left(1-D_2\phi(x^{\infty})^{\gamma_g}\right)\D(V[(m-1)n_0])\\
& \hspace{0.5cm} +\frac{\sqrt{2}h}{N}\sum_{n=0}^{n_0-1}\big\|\tilde\Phi\big[mn_0,(m-1)n_0+n+1\big]\big\|_F\big\|\Lambda[(m-1)n_0+n]\big\|_F\\
&\leq \left(1-D_2\phi(x^{\infty})^{\gamma_g}\right)\D(V[(m-1)n_0])
+\frac{\sqrt{2}h}{N}\sum_{n=0}^{n_0-1}\sqrt{N}\big\|\Lambda[(m-1)n_0+n]\big\|_F.\\
\end{aligned}
\end{align}
The last inequality follows from the fact that $\tilde\Phi\big[mn_0,(m-1)n_0+n+1\big]$ is a stochastic matrix with $N$ rows. We use Proposition  \ref{P4.1} and Lemma \ref{L4.2} to derive
\begin{align}\label{D-21}
\begin{aligned}
&\|\Lambda[(m-1)n_0+n]\|_F \\
&\hspace{0.5cm} \leq \|B[(m-1)n_0+n]\|_F\|V[(m-1)n_0+n]\|_F\\
&\hspace{0.5cm}\leq N\kappa_1\D(\mathcal B[(m-1)n_0+n])\cdot \sqrt{N}\max_{1\leq i\leq N}\|v_i[(m-1)n_0+n]\|\\
&\hspace{0.5cm} \leq N\kappa_1\left(1-D_1\zeta(x^{\infty})^{\gamma_g}\right)^{ m-1}\D({\mathcal B}[0])\cdot \sqrt{N}R_V^d,\quad 0\leq n\leq n_0-1.
\end{aligned}
\end{align}
Next, we combine \eqref{D-20} and \eqref{D-21} to get the following relation:
\begin{align}\label{D-22}
\begin{aligned}
\D(V[m n_0])&\leq \left(1-D_2 \phi(x^{\infty})^{\gamma_g}\right)\D(V[(m-1)n_0]) \\
&+\sqrt{2}hn_0N\kappa_1R_V^d\D({\mathcal B}[0]) \left(1-D_1\zeta(x^{\infty})^{\gamma_g}\right)^{ m-1}\\
&=:\left(1-D_2 \phi(x^{\infty})^{\gamma_g}\right)\D(V[(m-1)n_0])+D_3 \left(1-D_1\zeta(x^{\infty})^{\gamma_g}\right)^{ m-1},\quad m\in\bbn.
\end{aligned}
\end{align}
We divide both sides of \eqref{D-22} by $\left(1-D_2 \phi(x^{\infty})^{\gamma_g}\right)^m$ to obtain 
\[
\frac{\D(V[m n_0])}{\left(1-D_2 \phi(x^{\infty})^{\gamma_g}\right)^m}\leq \frac{\D(V[(m-1)n_0])}{\left(1-D_2 \phi(x^{\infty})^{\gamma_g}\right)^{m-1}}+\frac{D_3}{1-D_2 \phi(x^{\infty})^{\gamma_g}} \Bigg[\frac{1-D_1\zeta(x^{\infty})^{\gamma_g}}{1-D_2 \phi(x^{\infty})^{\gamma_g}}\Bigg]^{m-1},\quad m\in\bbn.
\]
This yields
\[
\frac{\D(V[m n_0])}{\left(1-D_2 \phi(x^{\infty})^{\gamma_g}\right)^m}\leq\D(V[0])+\frac{D_3}{1-D_2 \phi(x^{\infty})^{\gamma_g}}\sum_{n=0}^{m-1}\Bigg[\frac{1-D_1\zeta(x^{\infty})^{\gamma_g}}{1-D_2 \phi(x^{\infty})^{\gamma_g}}\Bigg]^n,\quad m\in\bbn.
\]
Thus for any $m\in\bbn$ we have
\begin{align}\label{D-23}
\begin{aligned}
&\D(V[mn_0]) \\
&\leq \left(1-D_2 \phi(x^{\infty})^{\gamma_g}\right)^m\D(V[0])+D_3\sum_{n=0}^{m-1}[1-D_1\zeta(x^{\infty})^{\gamma_g}]^n\left[1-D_2 \phi(x^{\infty})^{\gamma_g}\right]^{m-n-1}\\
&\leq \left(1-D_2 \phi(x^{\infty})^{\gamma_g}\right)^m\D(V[0])+D_3m\Big[\max\{1-D_1\zeta(x^{\infty})^{\gamma_g},1-D_2 \phi(x^{\infty})^{\gamma_g}\}\Big]^{m-1}.
\end{aligned}
\end{align}
So for any $t\in\bbn\cup\{0\}$, we combine Lemma \ref{L4.3}, Proposition \ref{P4.1}, and \eqref{D-23} to deduce 
\begin{align*}
\begin{aligned}
&\D(V[t])\leq \D\left(V\Big[n_0\Big\lfloor \frac{t}{n_0}\Big\rfloor\Big]\right)+\sum_{n=n_0\lfloor\frac{t}{n_0}\rfloor}^{t-1}2h\kappa_1R_V^d\D({\mathcal B}[n])\\
&\hspace{0.5cm}\leq \D\left(V\Big[n_0\Big\lfloor \frac{t}{n_0}\Big\rfloor\Big]\right)+2h\Big(t-n_0\Big\lfloor\frac{t}{n_0}\Big\rfloor\Big)\kappa_1R_V^d\left(1-D_1\zeta(x^{\infty})^{\gamma_g}\right)^{\lfloor\frac{t}{n_0}\rfloor}\D({\mathcal B}[0]) \\
&\hspace{0.5cm}\leq \D\left(V\Big[n_0\Big\lfloor \frac{t}{n_0}\Big\rfloor\Big]\right)+2hn_0\kappa_1R_V^d\left(1-D_1\zeta(x^{\infty})^{\gamma_g}\right)^{\lfloor\frac{t}{n_0}\rfloor}\D({\mathcal B}[0]) \\
&\hspace{0.5cm}\leq  \left(1-D_2 \phi(x^{\infty})^{\gamma_g}\right)^{\lfloor\frac{t}{n_0}\rfloor}\D(V[0])+D_3\Big\lfloor\frac{t}{n_0}\Big\rfloor\Big[\max\{1-D_1\zeta(x^{\infty})^{\gamma_g},1-D_2 \phi(x^{\infty})^{\gamma_g}\}\Big]^{\lfloor\frac{t}{n_0}\rfloor-1}\\
&\hspace{0.5cm}+2hn_0\kappa_1R_V^d\left(1-D_1\zeta(x^{\infty})^{\gamma_g}\right)^{\lfloor\frac{t}{n_0}\rfloor}\D({\mathcal B}[0]) \\
&\hspace{0.5cm}=\left(1-D_2 \phi(x^{\infty})^{\gamma_g}\right)^{\lfloor\frac{t}{n_0}\rfloor}\D(V[0])+2hn_0\kappa_1R_V^d\left(1-D_1\zeta(x^{\infty})^{\gamma_g}\right)^{\lfloor\frac{t}{n_0}\rfloor}\D({\mathcal B}[0])\\
&\hspace{0.5cm}+\sqrt{2}hn_0N\kappa_1R_V^d\D({\mathcal B}[0])\Big\lfloor\frac{t}{n_0}\Big\rfloor\Big[\max\{1-D_1\zeta(x^{\infty})^{\gamma_g},1-D_2 \phi(x^{\infty})^{\gamma_g}\}\Big]^{\lfloor\frac{t}{n_0}\rfloor-1}.
\end{aligned}
\end{align*}
This completes the proof.

\newpage

\noindent {\bf Acknowledgment}
The work of S.-Y. Ha is supported by the Samsung Science and Technology Foundation under Project Number SSTF-BA1401-03. The work of J.-G. Dong was supported in part by NSFC grant 11671109.

%\section*{References}
%
%They are to be cited in the text in superscript
%after comma and period (e.g.~word,\cite{am})
%but before other punctuation marks like colons,
%(e.g.~word\cite{lar}:) semi-colons and\break
%question marks. If it is mentioned in the text as part of a sentence,
%t should be of normal size, e.g.~see Ref.~\refcite{lar}.


\begin{thebibliography}{00}
\bibitem{A-C-H-L} S. Ahn, H. Choi, S.-Y. Ha and H. Lee, On the collision avoiding initial-configurations to the Cucker-Smale type flocking models,
\textit{Comm. Math. Sci.} {\bf 10} (2012) 625-643.

\bibitem{A-H} S. Ahn and S.-Y. Ha, Stochastic flocking dynamics of the Cucker-Smale model with
multiplicative white noises, \textit{J. Math. Phys.} {\bf 51} (2010) 103301.

\bibitem{BC} M. Ballerini, N. Cabibbo, R. Candelier, A. Cavagna, E. Cisbani, I. Giardina,
V. Lecomte, A. Orlandi, G. Parisi, A. Procaccini, M. Viale and V. Zdravkovic, Interaction ruling animal collective behavior depends on topological rather than metric distance: evidence from a field study, \textit{Proc. Natl. Acad. Sci.} USA, {\bf 105} (2008) 1232-1237.

\bibitem{B-H} N. Bellomo and S.-Y. Ha, A quest toward a mathematical theory of the dynamics of swarms,
\textit{Math. Mod. Meth. Appl. Sci.} {\bf 27} (2017) 745-770. 

\bibitem{B-C-C} F. Bolley, J. A. Canizo and J. A. Carrillo, Stochastic mean-field
limit: non-Lipschitz forces and swarming, \textit{Math. Mod. Meth. Appl. Sci.} {\bf 21} (2011), 2179-2210.

\bibitem{C-C-R} J. A. Canizo, J. A. Carrillo and J. Rosado, A well-posedness theory
in measures for some kinetic models of collective motion, \textit{Math. Mod. Meth. Appl. Sci.} {\bf 21} (2011) 515-539.

%\bibitem{C-D-P} J. A. Carrillo, M. R. D' Orsogna and V. Panferov: \textit{Double milling in
%self-propelled swarms from kinetic theory.} Kinetic Relat. Models {\bf 2}, 363-378  (2009).

\bibitem{C-F-R-T} J. A. Carrillo, M. Fornasier, J. Rosado and G. Toscani,  Asymptotic
flocking dynamics for the kinetic Cucker-Smale model, \textit{SIAM J. Math. Anal.} {\bf 42} (2010) 218-236.

%\bibitem{C-K-M-T} J. A. Carrillo, A. Klar, S. Martin and S. Tiwari: \textit{Self-propelled
%interacting particle systems with roosting force.} Math. Mod. Meth. Appl. Sci. {\bf 20}, 1533-1552  (2010).

\bibitem{C-H-H-J-K1} J. Cho, S.-Y. Ha, F. Huang, C. Jin and D. Ko, Emergence of bi-cluster flocking for the Cucker-Smale model, \textit{Math. Mod. Meth. Appl. Sci.} {\bf 26} (2016) 1191-1218.

%\bibitem{C-H-H-J-K2} J. Cho, S.-Y. Ha, F. Huang, C. Jin and D. Ko: \textit{Emergence of bi-cluster flocking for agent-based models with unit speed constraint.} Anal. Appl. {\bf 14}, 1-35 (2016) .

\bibitem{CH17} Y.-P. Choi and J. Haskovec, Cucker-Smale model with normalized communication weights and time
delay, \textit{Kinetic and Related Models,} {\bf 10} (2017), 1011-1033.

\bibitem{C-L} Y.-P. Choi and Z. Li, Emergent behavior of Cucker-Smale flocking particles with time-lags, Submitted.

\bibitem{C-H-K} Y.-P. Choi, S.-Y. Ha, and J. Kim, Propagation of regularity and finite-time collisions for the thermomechanical Cucker-Smale model with a singular communication, To appear in Networks and Heterogeneous Media.


\bibitem{C-H-J-K1} Y.-P. Choi, S.-Y. Ha, J. Jung and J. Kim, Global dynamics of the thermodynamic Cucker-Smale ensemble immersed in incompressible viscous fluids, Submitted.

\bibitem{C-H-J-K2} Y.-P. Choi, S.-Y. Ha, J. Jung and J. Kim, On the coupling of kinetic thermomechanical  Cucker-Smale equation and compressible viscous fluid system, Submitted.

\bibitem{C-H-L} Y.-P. Choi, S.-Y. Ha  and Z. Li, Emergent dynamics of the Cucker-Smale flocking model and its variants, In N. Bellomo, P. Degond, and E. Tadmor (Eds.), Active Particles Vol.I - Theory, Models, Applications(tentative title), Series: Modeling and Simulation in Science and Technology, Birkhauser-Springer.

%\bibitem{CMKB04} J. Cort\'{e}s, S. Martinez, T. Karatas, and F. Bullo: \textit{Coverage control for mobile sensing networks.}
%IEEE Trans. Robot. Autom., {\bf 20}, 243-255 (2004).

\bibitem{C-D} F. Cucker and J.-G. Dong, On flocks influenced by closest neighbors, \textit{Math. Mod. Meth. Appl. Sci.} {\bf 26} (2016) 2685-2708.

\bibitem{CD10} F. Cucker and J.-G. Dong, Avoiding collisions in flocks, \textit{IEEE Trans. Automat. Control} {\bf 55} (2010) 1238-1243.

\bibitem{C-M} F. Cucker and E. Mordecki, Flocking in noisy environments, \textit{J. Math. Pure Appl.} {\bf 89} (2008) 278-296.

\bibitem{CS07} F. Cucker and S. Smale, Emergent behavior in flocks, \textit{IEEE Trans. Automat. Control} {\bf 52} (2007) 852-862.

\bibitem{CS071} F. Cucker and S. Smale, On the mathematics of emergence, \textit{Japan. J. Math.} {\bf 2} (2007) 197-227.

%\bibitem{D-M1} P. Degond and S. Motsch: \textit{Macroscopic limit of self-driven particles with orientation interaction.} C.R. Math. Acad. Sci. Paris {\bf 345}, 555-560 (2007).
%
%\bibitem{D-M2} P. Degond and S. Motsch: \textit{Large-scale dynamics of the persistent Turing Walker model of fish behavior.} J. Stat. Phys. {\bf 131}, 989-1022 (2008).
%
%\bibitem{D-M3} P. Degond and S. Motsch: \textit{Continuum limit of self-driven particles with orientation
%interaction.} Math. Mod. Meth. Appl. Sci. {\bf 18}, 1193-1215 (2008).

\bibitem{DQ17} J.-G. Dong and L. Qiu, Flocking of the Cucker-Smale model on general digraphs, \textit{IEEE Trans. Automat. Control} {\bf 62} (2017) 5234-5239.

\bibitem{D-H-K} J.-G. Dong, S.-Y. Ha and D. Kim, Interplay of time-delay and velocity alignment in the Cucker-Smale model on a general digraph, Submitted.

\bibitem{D-H-K-K} J.-G. Dong, S.-Y. Ha, D. Kim and J. Kim, Time-delay effect on the flocking in an ensemble of thermomechanical Cucker-Smale particles, \textit{J. Differential Equations} (2018) https://doi.org/10.1016/j.jde.2018.08.034.

\bibitem{D-F-T} R. Duan, M. Fornasier and G. Toscani, A kinetic flocking model with
diffusion, \textit{Commun. Math. Phys.} {\bf 300} (2010) 95-145.

\bibitem{E-H-S} R. Erban, J. Haskovec and Y. Sun, On Cucker-Smale model with noise and delay, \textit{SIAM. J. Appl. Math.} {\bf 76} (2016) 1535-1557.

\bibitem{F-H-T} M. Fornasier, J. Haskovec and G. Toscani, Fluid dynamic description
of flocking via Povzner-Boltzmann equation, \textit{Phys. D} {\bf 240} (2011) 21-31.

\bibitem{H-K-Z} S.-Y. Ha, D. Ko and Y. Zhang, Critical coupling strength of the Cucker-Smale model for flocking, \textit{Math. Mod. Meth. Appl. Sci.} {\bf 27} (2017) 1051-1087.

%\bibitem{HLL09} S.-Y. Ha, K. Lee and D. Levy: \textit{Emergence of time-asymptotic flocking in a stochastic Cucker-Smale system.} Commun. Math. Sci. {\bf 7}, 453-469 (2009).

\bibitem{LW14} Y. Liu and J. Wu, Flocking and asymptotic velocity of the Cucker-Smale model with processing delay, \textit{J. Math. Anal. Appl.} {\bf 415} (2014) 53-61.

\bibitem{H-K-L} S.-Y. Ha, D. Kim and Z. Li, Emergent flocking dynamics of the discrete thermodynamic Cucker-Smale model, Submitted.

\bibitem{H-K-M-R-Z1} S.-Y. Ha, J. Kim, C. Min, T. Ruggeri  and X. Zhang, A global existence of classical solutions to the hydrodynamic Cucker-Smale model in presence of a temperature field, Submitted.

\bibitem{H-K-M-R-Z2} S.-Y. Ha, J. Kim, C. Min, T. Ruggeri  and X. Zhang, Uniform stability of approximate thermodynamic Cucker-Smale model, Submitted.


\bibitem{H-K-R} S.-Y. Ha, J. Kim and T. Ruggeri, Emergent behaviors of thermodynamic Cucker-Smale particles, To apper in \textit{SIAM J. Math. Anal}.

\bibitem{H-K-Z} S.-Y. Ha, J. Kim and X. Zhang, Uniform stability of the Cucker-Smale model and its application to the mean-field limit, Submitted.

\bibitem{H-L-L} S.-Y. Ha, K. Lee and D. Levy, Emergence of time-asymptotic flocking in a stochastic Cucker-Smale system, \textit{Commun. Math. Sci.} {\bf 7} (2009) 453-469.

\bibitem{H-Liu} S.-Y. Ha and J.-G. Liu,  A simple proof of Cucker-Smale flocking dynamics and mean field limit, \textit{Commun. Math. Sci.} {\bf 7} (2009) 297-325.

%\bibitem{H-P-Z} S.-Y. Ha, J. Park and X. Zhang: \textit{On the first-order reduction of the Cucker-Smale model and its  clustering dynamics.} Submitted.

%\bibitem{H-S} S.-Y. Ha and M. Slemrod: \textit{Flocking dynamics of a singularly perturbed oscillator chain and the Cucker-Smale system.} J. Dyn. Diff. Equat.{\bf 22}, 325-330 (2010).

\bibitem{H-R} S.-Y. Ha. and T. Ruggeri, Emergent dynamics of a thermodynamically consistent particle model, \textit{Arch. Ration. Mech. Anal.} {\bf 223} (2017) 1397-1425.

\bibitem{H-T} S.-Y. Ha and E. Tadmor, From particle to kinetic and hydrodynamic description of
flocking, \textit{Kinetic Relat. Models} {\bf 1} (2008) 415-435.

\bibitem{L-P} N. E. Leonard, D. A. Paley, F. Lekien, R. Sepulchre, D. M. Fratantoni and R. E. Davis, Collective motion, sensor networks and ocean sampling, \textit{Proc. IEEE} \textbf{95} (2007) 48-74.

\bibitem{L-H} Z. Li  and S.-Y. Ha, On the Cucker-Smale flocking with alternating leaders, \textit{Quart. Appl. Math.} {\bf 73} (2015) 693-709.

\bibitem{L-X} Z. Li and X. Xue, Cucker-Smale flocking under rooted leadership with fixed and switching topologies, \textit{SIAM J. Appl. Math.} {\bf 70} (2010) 3156-3174.

\bibitem{M-T1} S. Motsch and E. Tadmor, Heterophilious dynamics: Enhanced Consensus, \textit{SIAM Review} {\bf 56} (2014) 577-621.

\bibitem{M-T2} S. Motsch and E. Tadmor, A new model for self-organized dynamics and its flocking behavior, \textit{J. Statist. Phys.} {\bf 144} (2011) 923-947.

%\bibitem{OS06} R. Olfati-Saber: \textit{Flocking for multi-agent dynamic systems: algorithms and theory.}  IEEE Trans. Automat. Control, {\bf 51}, 401-420 (2006).
%
%\bibitem{PEG09}
%L. Perea, P. Elosegui and G. G\'{o}mez: \textit{Extension of the Cucker-Smale control law to space fight formations.} J. Guidance Contr. Dyn. {\bf 32}, 526-536 (2009).

%\bibitem{P-L-S-G-P} D. A. Paley, N. E. Leonard, R. Sepulchre, D. Grunbaum and J. K. Parrish: \textit{Oscillator models and collective motion.} IEEE Control Sys. {\bf 27}, 89-105  (2007).

\bibitem{P-K-H} J. Park, H. Kim and S.-Y. Ha, Cucker-Smale flocking with inter-particle bonding forces, \textit{IEEE Trans. Automat. Control} {\bf 55} (2010) 2617-2623.

\bibitem{P-E-G} L. Perea, P. Elosegui and G. G\'{o}mez, Extension of the Cucker-Smale control law to space flight formation, \textit{J. of Guidance, Control and Dynamics} {\bf 32} (2009) 527-537.

\bibitem{PV17} C. Pignotti and I. R. Vallejo, Flocking estimates for the Cucker-Smale model with time lag and hierarchical leadership, arXiv:1707.09244  (2017).

\bibitem{P-S} D. Poyato and J. Soler, Euler-type equations and commutators in singular and hyperbolic limits of kinetic Cucker-Smale models, \textit{Math. Mod. Meth. Appl. Sci.} {\bf 6} (2017) 1089-1152.

\bibitem{Rey87} C. W. Reynolds, Flocks, herds, and schools: A distributed behavioral model, \textit{Comput. Graph}, {\bf 21} (1987) 25--34.

\bibitem{She} J. Shen, Cucker-Smale flocking under hierarchical leadership, \textit{SIAM J. Appl. Math.} {\bf 68} (2007), 694-719.

\bibitem{Sontag} E.D. Sontag, Mathematical Control Theory, 2nd edn. Texts in Applied Mathematics, vol. {\bf 6}  Springer-Verlag 1998.

\bibitem{T-T} J. Toner and Y. Tu, Flocks, herds, and Schools: A quantitative theory of flocking, \textit{Physical Review E.} \textbf{58} (1998) 4828-4858.

\bibitem{VKJS95} T. Vicsek, A. Czir\'ok, E. Ben-Jacob, I. Cohen and O. Schochet, Novel type of phase transition in a system of self-driven particles, \textit{Phys. Rev. Lett.} {\bf 75} (1995) 1226-1229.

\bibitem{VZ12} T. Vicsek and A. Zefeiris, Collective motion, \textit{Phys. Rep.} {\bf 517} (2012) 71-140.

\end{thebibliography}
\end{document}